\documentclass[12pt]{amsart}
\usepackage{amsmath}
\usepackage{amsfonts,amsthm,amsopn,cite,mathrsfs,amssymb}
\usepackage{epsfig,verbatim}
\usepackage{graphicx}
\usepackage{subfigure}
\usepackage{xcolor, hyperref} 

%------- DEFINITIONS ------------

\newtheorem{tm}{Theorem}[section]    % theorem like environments
\newtheorem{lemma}[tm]{Lemma}
\newtheorem{prop}[tm]{Proposition}
\newtheorem{cor}[tm]{Corollary}

\theoremstyle{definition}
\newtheorem{df}{Definition}[section]

\newcommand{\beqa}{\begin{eqnarray*}}
\newcommand{\eeqa}{\end{eqnarray*}}

\newcommand{\field}[1]{\mathbb{#1}}
        %  real numbers
\newcommand{\bN}{\field{N}}        %  natural numbers 
        %  whole numbers 
\newcommand{\bC}{\field{C}}        %  complex numbers
        %  rational numbers
\newcommand{\bT}{\field{T}}        %  
\newcommand{\bD}{\field{D}}

  % new list environment 

                    % Greek Letters

              % Calligraphic Letters

\def\cH{\mathcal{ H}}

\def\cP{\mathcal{ P}}

\def\<{\left<}
\def\>{\right>}

\def\inv{^{-1}}

\def\mv1{M_v^1}

   % Q.E.D. box 

\hyphenation{Cara-theo-do-ry}
\hyphenation{Dau-be-chies}
\hyphenation{Barg-mann}
\hyphenation{dis-tri-bu-ti-ons}
\hyphenation{pseu-do-dif-fe-ren-tial}
\hyphenation{ortho-normal}
%------ END ------

%%% Local Variables: 
%%% mode: latex
%%% TeX-master: t
%%% End: 

\setlength{\topmargin}{-8mm}
\setlength{\headheight}{8pt}
\setlength{\textheight}{220mm}  % DIN A4: 297mm=11.65in, 247mm= 9.65in

\setlength{\oddsidemargin}{0pt}
\setlength{\evensidemargin}{0pt}
\setlength{\textwidth}{148 mm}   % DIN A4: 209mm= 8.25in, 159mm= 6.25in

%\newcommand{\qed}{\tag*{$\blacksquare$}}

%%%%%%%%%%%%%%
% new macros

\newcommand{\mz}{Marcinkie\-wicz-\-Zyg\-mund}

\newcommand{\cro}{C_{\gamma /n}}
\newcommand{\br}{B_{1- \gamma /n}}

\newcommand{\gn}{1-\tfrac{\gamma}{n}}
\newcommand{\poln}{\cP _n}
\newcommand{\ano}{_{A^2}}
\newcommand{\hano}{_{H^2}}
\newcommand{\ar}{A_{\gamma /n}}

%%%%%%%%%%%%%% 
%\usepackage{showkeys}
\begin{document}
\begin{abstract}
We study the relationship between sampling sequences in infinite
dimensional Hilbert spaces of analytic functions  and
Marcinkiewicz-Zygmund inequalities in subspaces of polynomials. We
focus on  the study of  the Hardy space and the Bergman 
space in one variable because they provide two  settings with a
strikingly different behavior.   
\end{abstract}

\title[Marcinkiewicz-Zygmund Inequalities]{Marcinkiewicz-Zygmund
  Inequalities for Polynomials in Bergmann and Hardy Spaces}
\author{Karlheinz Gr\"ochenig}
\address{Faculty of Mathematics \\
University of Vienna \\
Oskar-Morgenstern-Platz 1 \\
A-1090 Vienna, Austria}
\email{karlheinz.groechenig@univie.ac.at}

\author{Joaquim Ortega-Cerd\`a}
\address{Departament de Matem\`atiques i Inform\`atica \\
Universitat de  Barcelona \\
Gran Via de les Corts Catalanes, 585 \\
08007, Barcelona, Spain}
\email{jortega@ub.edu}

\subjclass[2010]{30E05,30H20,41A10,42B30}
\date{}
\keywords{Marcinkiewicz-Zygmund inequalities, Bergmann space, Hardy
  space, reproducing kernel}
\thanks{K.\ G.\ was
  supported in part by the  project P31887-N32  of the
Austrian Science Fund (FWF)}
 \maketitle

\section{Introduction}

\mz\ inequalities are finite-dimensional  models  for sampling in an infinite
dimensional Hilbert or Banach space of functions. Originally they
were studied in the context of interpolation by trigonometric
polynomials. They became
prominent in approximation theory, where they appear in quadrature rules and least square
problems,  and were usually studied in the context of   orthogonal
polynomials.

In an abstract setting one is given a reproducing kernel Hilbert space
$\cH $ on a set $S$ with reproducing kernel $k$ and a  sequence of
finite-dimensional subspaces $V_n$ such that $V_n \subseteq V_{n+1}$
and $\bigcup _n V_n$ is dense in $\cH $. Each $V_n$ comes with its own 
reproducing kernel $k_n$, which is the orthogonal projection of $k$. A
family of (finite) subsets $\Lambda _n \subseteq S$ is called a \mz\
family for $V_n$ in $\cH $, if there exist constants $A,B>0$,
the sampling constants, such that for all $n$ large, $n\geq n_0$, 
\begin{equation}
  \label{eq:k1}
\qquad   A \|p\|_{\cH }^2 \leq \sum _{\lambda \in \Lambda _n} \frac{|p(\lambda
    )|^2}{k_n(\lambda,\lambda )} \leq B \|p\|_{\cH }^2  \qquad \text{
    for all } p\in V_n  \, .
\end{equation}
Thus a \mz\ family comes with a sequence of \mz\ inequalities, which
are sampling inequalities for the finite-dimensional subspaces
$V_n$. 
The point of the definition is that the sampling constants are
independent of the subspace $V_n$. 

The diagonal of the reproducing kernel $k_n$ furnishes the most natural
choice of weights and goes back  to the corresponding notion of interpolating sequences in
reproducing kernel Hilbert 
spaces studied by Shapiro and Shields \cite{ShSh}. The weights are
intrinsic to the underlying spaces $V_n$. Another hint for using $k_n$
comes from frame theory: Since  $p(z) = \langle p, k_n (\cdot ,z)\rangle$ for $p\in
V_n $, the sampling inequality amounts to verifying that the
normalized reproducing kernels $k_n (\cdot
,\lambda)/k_n(\lambda,\lambda)^{1/2}$ form a  frame  in
$V_n $ all whose elements have   unit norm in  $\cH $. 

\mz\ inequalities have been studied in many different contexts, e.g.,
for trigonometric polynomials~\cite{Erd99,OS07}, for spaces of algebraic
polynomials with respect to some measure~\cite{Lub98,Lub99}, for spaces of
spherical harmonics on the sphere~\cite{Mar07,MOC10,MNW01}, for spaces of
eigenfunctions of the Laplacian on a compact Riemannian
manifold~\cite{OP12}, or even more generally for diffusion polynomials
on a 
metric measure space~\cite{FM11}. In sampling theory \mz\ inequalities
can be used to derive sampling
theorems for bandlimited functions~\cite{Gro99,Gro01b}.

In this paper we initiate the investigation of \mz\ inequalities for
polynomials in spaces of analytic functions. As the reproducing kernel
Hilbert space we take  either the Bergman
space $A^2(\bD )$ or the Hardy space $H^2(\bD )$  of analytic
functions on the unit disk $\bD $.  The natural finite-dimensional
subspaces will the 
family of the polynomials $\poln $ of degree $n$.
In  this context it is clear that an arbitrary set of at least $n+1$
distinct points yields  a sampling inequality
$ A\|p\|^2 \leq \sum _{\lambda \in \Lambda } |p(\lambda )|^2 \,
k_n(\lambda,\lambda) \inv  \leq B  \|p\|_{\cH } ^2$ for all $ p\in
\poln $. 
% $$
% A\|p\|^2 \leq \sum _{\lambda \in \Lambda } \frac{|p(\lambda )|^2}{k_n(\lambda,\lambda)} \leq B
% \|p\|_{\cH } ^2 \qquad   \forall p\in \poln \, .
% $$
 The objective of \mz\ inequalities is to construct  a sequence of
finite sets $\Lambda _n$, such that  the constants $A,B$ are
independent of the degree $n$. The  game is therefore to diligently
keep track of the constants and show that they do not depend on the
degree. 

We will see that this problem is deeply  related to  sampling
theorems for the full space $A^2$ or $H^2$. 
 We say that $\Lambda \subseteq S $ is a sampling
set for $\cH$, if there exist constants $A,B>0$, such that
\begin{equation}   \label{origdef}
  A \|f\|_{\cH }^2 \leq \sum _{\lambda \in \Lambda } \frac{|f(\lambda
    )|^2}{k(\lambda,\lambda )} \leq B \|f\|_{\cH }^2  \qquad \text{
    for all } f\in \cH \, , 
\end{equation}
where now $k(z,w)$ is the reproducing kernel of $\cH $. In our case $\cH =
A^2(\bD)$ or $=H^2(\bD )$.

Both the Bergman space and the Hardy space are reproducing kernel
Hilbert spaces, in which the polynomials are dense and 
  $k_n(\lambda, \lambda)\to k(\lambda, \lambda)$ pointwise. However,
the underlying 
measures are different, and as a consequence the reproducing kernels
and the implicit metrics are different. We will see that these
differences imply a drastically different behavior  of \mz\
families. 
In the Bergman space the points of 
a \mz\ family 
will be ``uniformly'' distributed in the entire disk, in Hardy space
the points will cluster near the boundary of the disk. The
concentration will depend on the degree. It will therefore be
practical to introduce  a notation of the relevant  disks and annuli.
For a fixed parameter $\gamma >0$, we will write
\begin{equation}
  \label{eq:j1}
  \br = \{ z\in \bD : |z| < \gn \} = B(0,\gn ) 
\end{equation}
for the centered  disk of radius $\gn $,  and
\begin{equation}
  \label{eq:i2}
  \cro = \{z\in \bD : \gn \leq |z| <1 \} \, .
\end{equation}
for the annulus of width $\gamma /n $ at the boundary of $\bD $. 

Our main result for the Bergman space $A^2(\bD )$ with norm $\|f\|\ano
^2 = \tfrac{1}{\pi}\int _{\bD } |f(z)|^2 dz $, where $dz$ is the area
measure on  $\bD $, establishes a clear correspondence between
sampling sets 
for $A^2(\bD )$ and \mz\ families for the polynomials $\poln $ in
$A^2$ as follows.

\begin{tm} \label{tmintro1}
(i)    Assume that $\Lambda \subseteq \bD $ is a sampling set for $A^2(\bD
  )$. Then for $\gamma >0$ small enough, the sets $\Lambda _n =
  \Lambda \cap \br  $ form a \mz\ family for $\poln $ in
  $A^2(\bD )$.

(ii) Conversely, if 
  $(\Lambda _n)$ is a \mz\ family for the polynomials $\poln $ in
  $A^2(\bD )$, then every  weak limit of $(\Lambda
  _n)$ is a sampling set for $A^2(\bD )$. 
\end{tm}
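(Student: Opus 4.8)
The plan is to exploit the explicit form of the two kernels. Since $\{z^j\}$ is orthogonal in $A^2(\bD)$ with $\|z^j\|\ano^2=(j+1)\inv$, one has $k(z,z)=(1-|z|^2)^{-2}$ and $k_n(z,z)=\sum_{j=0}^n(j+1)|z|^{2j}$, and a telescoping computation gives the ratio $k_n(z,z)/k(z,z)=1-(n+2)|z|^{2(n+1)}+(n+1)|z|^{2(n+2)}$, which is decreasing in $|z|$. Hence over the disk $\br$ its minimum is attained on the boundary circle and tends to $1-e^{-2\gamma}(1+2\gamma)>0$ as $n\to\infty$; so for $n$ large there is a constant $0<c(\gamma)\le 1$, independent of $n$, with $c(\gamma)\,k(\lambda,\lambda)\le k_n(\lambda,\lambda)\le k(\lambda,\lambda)$ for $\lambda\in\br$. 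The upper bound in (i) is then immediate: since $\Lambda_n\subseteq\Lambda$ and $1/k_n\le 1/(c(\gamma)k)$ on $\Lambda_n$, the Bessel half of \eqref{origdef} yields $\sum_{\lambda\in\Lambda_n}|p(\lambda)|^2/k_n(\lambda,\lambda)\le (B/c(\gamma))\,\|p\|\ano^2$ for all $p\in\poln$, with an $n$-independent constant.

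For the lower bound in (i) I would start from the sampling inequality \eqref{origdef} applied to $p\in\poln\subseteq A^2(\bD)$ and use $1/k_n\ge 1/k$ to write $\sum_{\lambda\in\Lambda_n}|p(\lambda)|^2/k_n(\lambda,\lambda)\ge A\|p\|\ano^2-\sum_{\lambda\in\Lambda\cap\cro}|p(\lambda)|^2/k(\lambda,\lambda)$. The whole game is to show the annulus term is small. Two ingredients enter. First, an elementary energy-concentration estimate: computing $\tfrac1\pi\int_{\cro}|p|^2$ in the orthogonal basis gives $\tfrac1\pi\int_{\cro}|p|^2\le\bigl(1-(1-\gamma/n)^{2n+2}\bigr)\|p\|\ano^2$, and the factor tends to $1-e^{-2\gamma}\to0$ as $\gamma\to0$. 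Second, a transference from the discrete annulus sum to this integral: by subharmonicity of $|p|^2$ I dominate $|p(\lambda)|^2/k(\lambda,\lambda)\asymp(1-|\lambda|)^2|p(\lambda)|^2$ by the average of $|p|^2$ over a pseudohyperbolic disk $B(\lambda,\tfrac12(1-|\lambda|))$, which for $\lambda\in\cro$ sits inside the wider annulus $\{z\in\bD:|z|>1-\tfrac{3\gamma}{2n}\}$. Since a sampling set for $A^2(\bD)$ is a finite union of separated sequences, these disks have bounded overlap, so the sum is $\lesssim\tfrac1\pi\int_{\{|z|>1-3\gamma/(2n)\}}|p|^2\lesssim(1-e^{-3\gamma})\|p\|\ano^2$. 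Choosing $\gamma$ small makes the annulus term $<A\|p\|\ano^2$, and the lower Marcinkiewicz--Zygmund bound follows with an $n$-independent constant.

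For the converse (ii) I would argue by transference to the limit. Fix $f\in A^2(\bD)$ and let $p_k=P_{n_k}f$ be its orthogonal projection onto $\cP_{n_k}$, along a subsequence realizing the weak limit $\Lambda$, so that $p_k\to f$ both in $A^2$ and locally uniformly. On any compact $K\subset\bD$ one has $\Lambda_{n_k}\cap K\to\Lambda\cap K$ with $k_{n_k}\to k$ and $p_k\to f$ uniformly, whence $\sum_{\lambda\in\Lambda_{n_k}\cap K}|p_k(\lambda)|^2/k_{n_k}(\lambda,\lambda)\to\sum_{\lambda\in\Lambda\cap K}|f(\lambda)|^2/k(\lambda,\lambda)$. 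Letting $K\uparrow\bD$ and invoking Fatou gives the upper (Bessel) bound $\sum_{\lambda\in\Lambda}|f(\lambda)|^2/k(\lambda,\lambda)\le B\|f\|\ano^2$. For the lower bound I would feed the finite-dimensional inequality \eqref{eq:k1}, namely $A\|p_k\|\ano^2\le\sum_{\lambda\in\Lambda_{n_k}}|p_k(\lambda)|^2/k_{n_k}(\lambda,\lambda)$, into the compact-set convergence; this forces me to show that the boundary tail $\sum_{\lambda\in\Lambda_{n_k},\,|\lambda|>R}|p_k(\lambda)|^2/k_{n_k}(\lambda,\lambda)$ is uniformly small as $R\to1$. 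Splitting $p_k=f_{\le m}+r_k$ into a fixed low-degree part and a remainder of small norm handles the remainder via the upper bound ($\le B\|r_k\|\ano^2$), reducing matters to the boundary tail of a single fixed polynomial.

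The main obstacle is precisely this last point, and it reflects the genuinely different behaviour of the two spaces. Near $\partial\bD$ the two weights decouple: $1/k(\lambda,\lambda)=(1-|\lambda|^2)^2$ can be far smaller than $1/k_n(\lambda,\lambda)\ge 2/((n+1)(n+2))$, so a priori a family could place many points within distance $\ll 1/n$ of the boundary and carry Marcinkiewicz--Zygmund mass that evaporates in the weak limit. Ruling this out---showing that no mass is lost, i.e. $\liminf_k\sum_{\lambda\in\Lambda_{n_k}}|p_k(\lambda)|^2/k_{n_k}(\lambda,\lambda)\le\sum_{\lambda\in\Lambda}|f(\lambda)|^2/k(\lambda,\lambda)$---is the crux; I expect it to rest on the $A^2$-decay of $f$ together with the separation forced by the upper bound, which limits how many points can concentrate near $\partial\bD$ at the critical scale. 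Once the tail is shown to be negligible, letting $R\to1$ yields $A\|f\|\ano^2\le\sum_{\lambda\in\Lambda}|f(\lambda)|^2/k(\lambda,\lambda)$, so every weak limit is a sampling set for $A^2(\bD)$.
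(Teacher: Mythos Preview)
Your treatment of part~(i) is correct and matches the paper's proof (Theorem~\ref{samptomz}) essentially line for line: the kernel comparison $c(\gamma)k\le k_n\le k$ on $\br$, the energy concentration $\tfrac1\pi\int_{\cro}|p|^2\le(1-(1-\gamma/n)^{2n+2})\|p\|\ano^2$, and the subharmonic transference from the discrete annulus sum to the integral are precisely the paper's Lemma~\ref{wei1}, Corollary~\ref{negl}, and Lemma~\ref{boundary}. The only cosmetic difference is that the paper first passes to a uniformly discrete sampling subset $\Lambda'\subseteq\Lambda$ (Lemma~\ref{relsep}) rather than invoking the finite-union structure of $\Lambda$ directly; either route works.

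For part~(ii) you have correctly isolated the crux --- that mass carried by $\Lambda_n$ near $\partial\bD$ could evaporate in the weak limit because $1/k_n\gg 1/k$ there --- but you have not resolved it, and this resolution is the substance of the paper's argument. The missing mechanism is a pair of counting estimates extracted from the \emph{upper} \mz\ inequality by testing it against specific polynomials (Proposition~\ref{boundarycount}). Testing against $p(z)=z^n$, for which $|p(\lambda)|\asymp 1$ and $k_n(\lambda,\lambda)\asymp n^2$ on $\cro$, forces $\#(\Lambda_n\cap\cro)\lesssim n$; hence for any \emph{fixed} polynomial $q$ the outermost tail satisfies
\[
\sum_{\lambda\in\Lambda_n\cap\cro}\frac{|q(\lambda)|^2}{k_n(\lambda,\lambda)}\lesssim \|q\|_\infty^2\cdot \frac{n}{n^2}\to 0.
\]
Testing against the normalized kernels $\kappa_n(\cdot,w)$ for $w\in\br$ yields $\#\bigl(\Lambda_n\cap B(w,\tfrac12(1-|w|))\bigr)\le C$ uniformly in $n$ and $w$, so each $\Lambda_n\cap\br$ is a finite union of uniformly separated sets with separation independent of $n$; this is what makes your subharmonicity argument applicable to the intermediate zone $\{r\le|\lambda|<1-\gamma/n\}$ with a uniform constant. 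With these two estimates in hand the paper fixes a polynomial $p$, splits the \mz\ sum into the three zones $|\lambda|<r$, $r\le|\lambda|<1-\gamma/n$, $|\lambda|\ge 1-\gamma/n$, bounds the outer two by $\tfrac{A}{2}\|p\|\ano^2+o(1)$, and passes to the weak limit on the compact inner piece. Your instinct that ``separation forced by the upper bound limits how many points concentrate near $\partial\bD$'' was exactly right; what was missing is the device that makes it quantitative --- plugging $z^n$ and $\kappa_n(\cdot,w)$ into the upper inequality.
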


See Section~\ref{sec:mzsamp} for the definition of a weak limit of
sets. 
The theorem shows that the construction of \mz\ families for the
Bergman space is on the same level of difficulty as the construction
of sampling sets for $A^2$. Fortunately, these sampling sets have been
characterized completely in the deep work of K.\
Seip~\cite{seip93}. Sampling sets are completely determined by a
suitable density, the Seip-Korenblum density. As a consequence of our
main theorem, one can now give many examples of \mz\ families for
$A^2$. 

By contrast, the Hardy space does not admit any sampling sequences. By
a theorem of P.\ Thomas~\cite{thomas98}(Props.~2 and~3), \emph{a function $f\in
H^2(\bD )$ satisfying $A \|f\| \hano ^2 \leq \sum _{\lambda \in \Lambda }
|f(\lambda )|^2 k(\lambda,\lambda )\inv \leq   B \|f\| \hano ^2$ must
be identical zero}.  Therefore there can be no analogue of
Theorem~\ref{tmintro1}(ii).

Despite the lack of a sampling theorem for $H^2(\bD )$ we can show the
existence of \mz\ families for polynomials with a different method.
The idea is to connect polynomials on the disc to  
polynomials on the torus in $L^2(\bT )$. By moving a \mz\ family for
 polynomials on $\bT $ into the interior of $\bD $, we
% can associate to every \mz\
% family on $\bT$
obtain a \mz\ family for polynomials in 
Hardy space. Since the problem on the torus is well
understood~\cite{OS07}, we can derive  a general construction of  \mz\ families
in $H^2(\bD )$.

\begin{tm} \label{tmintro2}
    Assume that the family $(\widetilde{\Lambda _n}) = \Big(\{e^{i \nu
      _{n,k}}: k= 1, \dots , L_n\} \Big) \subseteq \bT $
    is a \mz\ family for $\poln $  on the torus, i.e.,
    $$
    A \|p\|_{L^2(\bT )}^2 \leq   \sum _{k=1} ^{L_n}
      \frac{|p(e^{i \nu _{n,k} })|^2}{n} \leq B    \|p\|_{L^2(\bT )}^2
    $$
    for all  polynomials $p$  of degree $n$. 

    Fix $\gamma >0$ arbitrary, choose $\rho _{n,k} \in [\gn, 1)$
    arbitrary, and set $\Lambda _n = \{ \rho _{n,k} e^{i \nu _{n,k}}:
    k=1, \dots , L_n\} \subseteq \cro $ for $n\in \bN $. Then
    $(\Lambda _n)$ is a \mz\ family for $\cP _{n}$ in $H^2(\bT )$.  
    \end{tm}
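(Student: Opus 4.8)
The plan is to reduce the $H^2$ statement to a comparison with the given torus sums and then to transport the torus inequality radially. Write $\zeta_k=e^{i\nu_{n,k}}$ and $\lambda_{n,k}=\rho_{n,k}\zeta_k\in\cro$. Two elementary reductions come first. For $p(z)=\sum_{j=0}^n a_jz^j\in\poln$ the $H^2(\bD)$ norm and the $L^2(\bT)$ norm coincide, $\norm{p}\hano^2=\sum_{j=0}^n\abs{a_j}^2=\norm{p}_{L^2(\bT)}^2$, and the reproducing kernel of $\poln$ inside $H^2(\bD)$ is the truncated Szeg\H{o} kernel $k_n(z,w)=\sum_{j=0}^n(z\bar w)^j$, so that $k_n(\lambda,\lambda)=\sum_{j=0}^n\abs{\lambda}^{2j}$. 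For $\lambda\in\cro$ one has $\abs\lambda\in[\gn,1)$, whence $k_n(\lambda,\lambda)\le n+1$ and $k_n(\lambda,\lambda)\ge(n+1)(\gn)^{2n}\ge\tfrac12 e^{-2\gamma}(n+1)$ once $n$ is large. Thus $k_n(\lambda,\lambda)\asymp n$ with constants depending only on $\gamma$, and the desired $H^2$ inequality is equivalent, up to $\gamma$-dependent constants, to
\[
  \sum_{k=1}^{L_n}\abs{p(\lambda_{n,k})}^2\asymp n\,\norm{p}_{L^2(\bT)}^2\qquad(p\in\poln),
\]
which I will compare against the given torus sums $\sum_k\abs{p(\zeta_k)}^2$.

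If all radii agreed, $\rho_{n,k}\equiv\rho$, this would be immediate: $p(\rho\zeta_k)=(m_\rho p)(\zeta_k)$ with $m_\rho p(z):=p(\rho z)\in\poln$, and $\norm{m_\rho p}^2=\sum_j\abs{a_j}^2\rho^{2j}\asymp\norm{p}^2$, so the torus inequality applied to $m_\rho p$ delivers both bounds. To handle genuinely varying radii I would Taylor-expand each sample at its own boundary point,
\[
  p(\lambda_{n,k})=\sum_{m=0}^n\frac{(\rho_{n,k}-1)^m}{m!}\,\zeta_k^m\,p^{(m)}(\zeta_k),
\]
and read this as a decomposition $u=\sum_{m=0}^n D_m v^{(m)}$ of the sampling vector $u=(p(\lambda_{n,k}))_k$, where $v^{(m)}=(p^{(m)}(\zeta_k))_k$ and $D_m$ is diagonal with entries of modulus $\le(\gamma/n)^m/m!$. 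Since $p^{(m)}\in\cP_{n-m}\subseteq\poln$, the torus upper bound applies to $p^{(m)}$, and combined with the elementary estimate $\norm{p^{(m)}}_{L^2(\bT)}\le n^m\norm{p}$ (because $\sum_j (j!/(j-m)!)^2\abs{a_j}^2\le n^{2m}\sum_j\abs{a_j}^2$) it gives $\norm{D_m v^{(m)}}\le\frac{\gamma^m}{m!}\sqrt{Bn}\,\norm{p}$.

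Summing this geometric-type series yields the upper bound for \emph{every} $\gamma$: $\norm{u}\le e^{\gamma}\sqrt{Bn}\,\norm{p}$, i.e.\ $\sum_k\abs{p(\lambda_{n,k})}^2\le e^{2\gamma}Bn\norm{p}^2$. For the lower bound the same splitting gives $\norm{u}\ge\norm{v^{(0)}}-\sum_{m\ge1}\norm{D_m v^{(m)}}\ge(\sqrt A-(e^\gamma-1)\sqrt B)\sqrt n\,\norm{p}$, which is positive, and hence already proves the theorem, as long as $\gamma<\log(1+\sqrt{A/B})$.

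The main obstacle is the lower bound for large $\gamma$. The termwise estimate above is genuinely lossy: for $p(z)=z^n$ the Taylor terms resum by the binomial theorem to $p(\lambda_{n,k})=\rho_{n,k}^n\zeta_k^n$, of modulus $\ge\tfrac12 e^{-\gamma}$, whereas bounding them one by one produces an $e^\gamma$ blow-up. Indeed the displacement $\gamma/n$ is of the order of the intrinsic resolution $1/n$ of $\poln$, so the passage from the circle to $\cro$ is an $O(1)$ perturbation and no purely perturbative argument can close. To retain the cancellation I would deform the nodes along the radial homotopy $\lambda_{n,k}(t)=\bigl(1-t(1-\rho_{n,k})\bigr)\zeta_k$, $t\in[0,1]$, which stays in $\overline{\cro}$, and track $S(t)=\sum_k\abs{p(\lambda_{n,k}(t))}^2$; differentiating gives $\abs{S'(t)}\le\frac{2\gamma}{n}S(t)^{1/2}\bigl(\sum_k\abs{p'(\lambda_{n,k}(t))}^2\bigr)^{1/2}$, so a logarithmic (Gr\"onwall) estimate would produce the multiplicative bound $S(1)\ge e^{-c(\gamma)}S(0)\gtrsim n\norm{p}^2$ \emph{provided} one controls the derivative sums by a discrete Bernstein inequality $\sum_k\abs{p'(\lambda_{n,k}(t))}^2\lesssim n^2 S(t)$ along the homotopy. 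Establishing this reverse inequality — or, alternatively, transferring the Seip/\cite{OS07} density characterization from the torus to the annulus and observing that a radial displacement of order $1/n$ leaves the relevant sampling densities unchanged — is where I expect the real work to lie.
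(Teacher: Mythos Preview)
Your reductions are correct: on $\cro$ one has $k_n(\lambda,\lambda)\asymp n$, and the $H^2$ and $L^2(\bT)$ norms of a polynomial agree, so the problem is exactly the comparison of $\sum_k|p(\lambda_{n,k})|^2$ with $n\|p\|^2$. Your upper bound via the Taylor expansion and termwise Bernstein is clean and works for every $\gamma$, and your perturbative lower bound is valid in the regime $\gamma<\log(1+\sqrt{A/B})$. You have also correctly diagnosed that the lower bound for arbitrary $\gamma$ is the heart of the matter and that a purely perturbative estimate cannot close.

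The gap is in your proposed remedy. The Gr\"onwall argument needs the discrete Bernstein inequality
\[
\sum_k|p'(\lambda_{n,k}(t))|^2\ \lesssim\ n^2\sum_k|p(\lambda_{n,k}(t))|^2
\]
uniformly along the homotopy. At $t=0$ this follows from the upper \mz\ bound for $p'$, the $L^2$ Bernstein inequality, and the \emph{lower} \mz\ bound for $p$ on $\bT$. For $t>0$, however, the last step requires precisely the lower sampling inequality at the intermediate nodes $\lambda_{n,k}(t)$, which is what you are trying to prove; the argument is circular. A continuity/bootstrap attempt (``the set of $t$ with lower constant $\ge A/2$ is open and closed'') runs into the same issue: openness at a given $t$ with a \emph{uniform} step size already presupposes a uniform sampling bound there. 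Your alternative, ``transfer the density characterization from $\bT$ to the annulus'', is not a proof either: the \cite{OS07} characterization concerns points on $\bT$, and extending it to $\ar$ with constants independent of $n$ is essentially the theorem itself.

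The paper takes a completely different, complex-analytic route going back to Duffin--Schaeffer. Given $p\in\poln$, reflect all zeros outside $\overline{\bD}$ into $\bD$ by multiplying with the corresponding Blaschke factors; the resulting ``minimum phase'' polynomial $\widetilde{p_1}$ satisfies $|\widetilde{p_1}|=|p|$ on $\bT$ (so $\|\widetilde{p_1}\|\hano=\|p\|\hano$) and $|\widetilde{p_1}(z)|\le\min\bigl(|p(z)|,|p(1/\bar z)|\bigr)$ on $\bD$. Setting $p_1(z)=\widetilde{p_1}\bigl((1+\tfrac{\gamma}{3n})^{-1}z\bigr)$ and moving each sample $\lambda$ to $(1+\tfrac{\gamma}{3n})\lambda$ or $(1+\tfrac{\gamma}{3n})/\bar\lambda$ (whichever lands in $\bD$) produces a new configuration $\Lambda_n^{(1)}$ contained in the strictly thinner annulus $A_{3\gamma/(4n)}$, with $\|p_1\|\hano^2\ge e^{-2\gamma/3}\|p\|\hano^2$ and $\sum_{\mu\in\Lambda_n^{(1)}}|p_1(\mu)|^2\le\sum_{\lambda\in\Lambda_n}|p(\lambda)|^2$. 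Iterating, the annulus width shrinks geometrically, the points converge to the fixed torus projection $\widetilde{\Lambda_n}$, and the polynomials stay in $\poln$ with norms bounded below by $e^{-8\gamma/3}\|p\|\hano^2$; a subsequential limit $p_\infty$ then satisfies the torus \mz\ inequality on $\widetilde{\Lambda_n}$, which feeds back the lower bound for $p$ on $\Lambda_n$. The point is that the Blaschke reflection converts the ``$O(1)$ perturbation'' you identified into a contraction, with no Bernstein-type reverse inequality needed.
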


 This result
 provides a systematic construction of examples of \mz\ families for
 Hardy space,  since \mz\ families for  polynomials on the torus can
 be characterized  almost completely by their density~\cite{OS07}. 
 \mz\ families on the torus and more generally of 
 orthogonal  polynomials have been studied intensely in approximation
 theory, see~\cite{Erd99,Gro99,Lub98,Lub99,MT00,Nev86}  for a sample
 of papers. % the detailed investigation of \mz\
 % families for polynomials on the torus and for the
 % characterization with density. 

The technical heart of the matter is, as so often in complex analysis, the investigation
and estimate of the reproducing kernels, namely the kernel $k(z,w)$  for the
entire space $A^2$ or $H^2$ and  the kernels $k_n(z,w)$   for the
polynomials of degree $n$. The guiding principle 
is to sample the polynomials in the region where the diagonals $k(z,z)$ and
$k_n(z,z)$ are comparable in size. One may call this region the
``bulk'' region.  For the Bergman space the bulk region is the
centered disk $\br $, because this is where the mass of polynomials of degree
$n$ is concentrated. For the Hardy space the bulk region is the
annulus $\cro $, as the $H^2$-norm sees only the boundary behavior of
functions in $H^2$.

Our main  insight may be relevant in other settings. For
instance, our main theorems can be extended to polynomials in weighted
Bergman spaces or in Fock
space~\cite{GOC21}. We expect a version of Theorem~\ref{tmintro1} to  hold  for Bergman space
on the unit ball in $\bC ^n$, though this will be more technical to
elaborate.

The paper is organized as follows: In Section~2 we treat the theory of
\mz\ families in the Bergman space $A^2(\bD )$ and in Section~3 we
treat the Hardy space $H^2(\bD )$. Each section starts with the
necessary background, the comparisons of  the various
reproducing kernels and the main contribution to the norms. Then we
formulate and prove the main results about \mz\ families. 

Throughout we will use the notation $\lesssim $ to abbreviate an
inequality $f \leq Cg$ where the constants is independent of the
essential input, which in our case will be the degree of the
polynomial. To indicate the dependence of the constant on some
 parameter $\gamma $, say, we will write $\lesssim \, _\gamma $. As
 usual, $f\asymp g$ means that both $f\lesssim g$ and $g\lesssim f$ hold.

\section{Bergmann space}

\subsection{Basic facts.}
The Bergmann space $A^2= A^2(\bD)$ consists of all analytic functions
on the unit disk $\bD $ 
with finite norm
\begin{equation}
  \label{eq:1}
  \|f\|_{A^2} = \Big( \frac{1}{\pi} \int _{\bD } |f(z)|^2 \, dz
  \Big)^{1/2} \, , 
\end{equation}
where $dz$ is the area measure on $\bD  $. 
For a detailed exposition of Bergman spaces we refer to the
excellent monographs~\cite{DS04,HKZ00}, both of which contain an
entire chapter on sampling in Bergman space.

The monomials $z\mapsto z^k$ are orthogonal with norm $\|z^k\|_{A^2}^2 =
%\frac{1}{\pi} 2\pi \int _0^1 r^{2k} \, rdr =
\frac{1}{k+1}$. Consequently % the functions
% $$
% e_k(z) = \sqrt{k+1} \, z^k
% $$
% form an orthonormal basis for $A^2(\bD )$, and
the norm of $f(z) =
\sum _{k=0}^\infty a_k z^k$ is 
\begin{equation}
  \label{eq:2}
  \|f\|_{A^2} ^2 = \sum _{k=0}^\infty |a_k|^2 \frac{1}{k+1} \, .
\end{equation}

Let $p(z) = \sum _{k=0}^n a_k z^k \in \poln $, then its norm on a disk $B_\rho $, $\rho <1$, is
given by
\begin{align*}
    \frac{1}{\pi} \int _{B_\rho } |p(z)|^2 \, dz &= \frac{1}{\pi} 
  \sum _{k,l=0}^n a_k\overline{a_l}   \int _{B_\rho } z^k \bar{z}^l \, dz
             \\
&= \frac{1}{\pi} 2\pi \sum _{k=0}^n |a_k|^2 \int _0^\rho r^{2k} rdr   \\
  &= \sum _{k=0}^n |a_k|^2 \rho ^{2k+2} \frac{1}{k+1}
\end{align*}
For  $p\in \poln $, we therefore have
\begin{equation}
  \label{eq:9}
    \frac{1}{\pi} \int _{B_\rho } |p(z)|^2 \, dz \geq \rho ^{2n+2}
    \|p\|_{A^2}^2 \, .
\end{equation}
To obtain a bound independent of $n$, we need to  choose $\rho _n$
such that $\rho _n^{2n} \geq A $ for all $n$. By picking  $\rho _n =
\gn $, we find 
\begin{equation}
  \label{eq:10}
e^{-2\gamma }\leq   (\gn )^{n} \leq e^{-\gamma} \qquad \forall n >
  2 \gamma \, .
\end{equation}
In the following we will  use  these inequalities abundantly. 

\begin{cor} \label{negl}
  If $p\in \poln $, then for every $\gamma >0$ and $n>2 \gamma$
$$
\frac{1}{\pi} \int _{C_{\gamma /n}} |p(w)|^2 \, dw \leq
\|p\|_{A^2}^2 \big(1- (\gn )^{2n+2} \big) \leq \|p\|_{A^2}^2 (1- \tfrac{1}{4}e^{-4\gamma }) \, .
$$
In particular, for $\epsilon >0$ there exists $\gamma >0$ (small
enough) such that
$$
\frac{1}{\pi} \int _{C_{\gamma /n}} |p(w)|^2 \, dw \leq
\epsilon \|p\|_{A^2}^2  \qquad \text{ for all } p\in \poln  \,
$$
independent of $n$. 
\end{cor}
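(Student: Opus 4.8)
The plan is to reduce the estimate to the explicit coefficient computation already carried out for $\frac1\pi\int_{B_\rho}|p|^2$ and then to control a single scalar factor. Writing $p(z)=\sum_{k=0}^n a_kz^k\in\poln$, the annulus $\cro$ is the complement of the inner disk $\br$ in $\bD$, so the integral over $\cro$ equals the full $A^2$-norm minus the integral over $\br$. Substituting $\rho=\gn$ into $\frac1\pi\int_{B_\rho}|p|^2=\sum_{k=0}^n|a_k|^2\rho^{2k+2}\tfrac1{k+1}$ and subtracting from $\|p\|_{A^2}^2=\sum_{k=0}^n|a_k|^2\tfrac1{k+1}$ gives
\begin{equation*}
\frac1\pi\int_{\cro}|p(w)|^2\,dw=\sum_{k=0}^n\frac{|a_k|^2}{k+1}\Bigl(1-(\gn)^{2k+2}\Bigr).
\end{equation*}

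First I would extract the largest coefficient from this sum. Since $0<\gn<1$, the factor $1-(\gn)^{2k+2}$ is increasing in $k$ and hence maximized at $k=n$; pulling $1-(\gn)^{2n+2}$ out and reassembling the norm yields the first claimed bound $\frac1\pi\int_{\cro}|p|^2\le\|p\|_{A^2}^2\bigl(1-(\gn)^{2n+2}\bigr)$. For the second bound I would lower bound $(\gn)^{2n+2}$ using the two-sided estimate~\eqref{eq:10}: squaring $(\gn)^n\ge e^{-2\gamma}$ gives $(\gn)^{2n}\ge e^{-4\gamma}$, while the hypothesis $n>2\gamma$ forces $\gn>\tfrac12$ and hence $(\gn)^2>\tfrac14$. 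Multiplying the two gives $(\gn)^{2n+2}>\tfrac14e^{-4\gamma}$, i.e.\ $1-(\gn)^{2n+2}<1-\tfrac14e^{-4\gamma}$.

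The one point that needs care is the final ``in particular'' statement: the intermediate bound $1-\tfrac14e^{-4\gamma}$ tends to $\tfrac34$, not to $0$, as $\gamma\to0$, so it cannot be fed directly into the conclusion. Instead I would return to the sharper quantity $1-(\gn)^{2n+2}$ and estimate it uniformly in $n$. For $\gamma<\tfrac12$ the constraint $n>2\gamma$ is just $n\ge1$, so $\gn=1-\gamma/n\ge1-\gamma$, and therefore $(\gn)^{2n+2}\ge e^{-4\gamma}(\gn)^2\ge e^{-4\gamma}(1-\gamma)^2$, giving
\begin{equation*}
\frac1\pi\int_{\cro}|p(w)|^2\,dw\le\|p\|_{A^2}^2\Bigl(1-e^{-4\gamma}(1-\gamma)^2\Bigr),
\end{equation*}
whose right-hand factor is independent of $n$ and tends to $0$ as $\gamma\to0$. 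Given $\epsilon>0$ one then chooses $\gamma$ small enough that $1-e^{-4\gamma}(1-\gamma)^2<\epsilon$. Throughout, the essential check is that every estimate remain uniform in the degree $n$, which holds here because all of the $n$-dependence has been isolated into the single monotone factor $1-(\gn)^{2n+2}$.
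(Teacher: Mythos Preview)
Your proof is correct and follows essentially the same approach as the paper: subtract the disk integral from the full norm, use the monotonicity in $k$ (equivalently, the paper's inequality~\eqref{eq:9}) to extract the factor $1-(\gn)^{2n+2}$, and bound $(\gn)^{2n+2}\ge e^{-4\gamma}(\gn)^2\ge\tfrac14 e^{-4\gamma}$. Your observation that the intermediate constant $1-\tfrac14 e^{-4\gamma}$ does not tend to $0$ and that one must return to the sharper factor $1-(\gn)^{2n+2}$ is exactly right; the paper handles this the same way, just more tersely, by asserting $\inf_{n}(\gn)^{2n+2}\to1$ as $\gamma\to0$.
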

\begin{proof}
With  \eqref{eq:9}  and the partition $\bD = \br \cup \cro $ we obtain  
  \begin{align*}
\frac{1}{\pi} \int _{C_{\gamma /n}} |p(w)|^2 \, dw &= \|p\|_{A^2}^2 -
 \frac{1}{\pi} \int _{\br} |p(w)|^2\, dw \\
  &  \leq \|p\|_{A^2}^2 \big(1 - (\gn )^{2n+2} \big) \leq 
\|p\|_{A^2}^2 (1-\frac{1}{4} e^{-4\gamma }) \, ,     
  \end{align*}
  since $(\gn )^{2n+2} \geq  e^{-4\gamma } (\gn )^2 \geq e^{-4\gamma }
  /4$ for $n\geq 2\gamma $. Also as $\gamma \to 0$, \\  $\inf _{n\in \bN } (\gn )^{2n+2}
  \to 1$. 
\end{proof}

\subsection{The Bergman kernels}

Since $\sqrt{k+1} \,z^k$ is an orthonormal basis for $\poln $ in
$A^2(\bD )$, the reproducing kernel of $\cP _n$ in $A^2$ is given by
\begin{align}  
  k_n(z,w) = \sum _{k=0}^n
             (k+1) (z\bar{w})^k = \frac{1+(n+1)(z\bar{w})^{n+2} -
    (n+2)(z\bar{w})^{n+1}}{(1-z\bar{w})^2} \, .  \label{eq:3}
\end{align}
% \begin{align}  
%   k_n(z,w) & = \sum _{k=0}^n e_k(z) \overline{e_k(w)} = \sum _{k=0}^n
%              (k+1) (z\bar{w})^k \notag \\
%   &= \frac{1+(n+1)(z\bar{w})^{n+2} -
%     (n+2)(z\bar{w})^{n+1}}{(1-z\bar{w})^2} \, .  \label{eq:3}
% \end{align}
As $n\to \infty $, the kernel tends to the Bergman  kernel of
$A^2$,
\begin{equation}
  \label{eq:4b}
k(z,w) =   \frac{1}{(1-z\bar{w})^2} \qquad z,w \in \bD \, .
\end{equation}
% For later use we note that the convergence is uniform on compact sets
% in $\bD $.

We first compare these kernels in two regimes, namely the ``bulk''
regions $\br $ and the boundary region $\cro $. 
%  As the weight in the \mz\ inequalities for $\poln$
% is $k_n(z,z)\inv $, we first provide estimates for these weights.
\begin{lemma}
  \label{wei1}
  Let $k_n(z,w)$ be the reproducing kernel of $\poln$ in $A^2$ and
  $\gamma >0$ be arbitrary. 

  (i) If $|z| \leq \gn$, then $k_n(z,z) \asymp k(z,z)$  for $n$ large
  enough,  $n\geq n_\gamma $, precisely 
  \begin{equation}
    \label{eq:5}
    c_\gamma k(z,z) \leq k_n(z,z) \leq k(z,z) = \frac{1}{(1-|z|^2)^2}
    \, ,
  \end{equation}
where $c_\gamma >0$ can be chosen as  $c_\gamma =
1-e^{-2\gamma}(1+2\gamma )$.

  (ii) If $\gn \leq |z| <1$ and $n>\max (\gamma , 3) $, then $k_n(z,z)
  \asymp n^2$, precisely, 
  \begin{equation}
    \label{eq:6}
    \frac{e^{-4\gamma } }{4} n^2 \leq k_n(z,z) \leq n^2 \, .
  \end{equation}
\end{lemma}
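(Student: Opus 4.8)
The plan is to reduce everything to a single real variable $t=|z|^2\in[0,1)$, since on the diagonal both kernels depend only on $t$: from \eqref{eq:3} we have $k_n(z,z)=\sum_{k=0}^n(k+1)t^k$ and $k(z,z)=(1-t)^{-2}$. Dividing the two closed forms, the ratio becomes
$$\frac{k_n(z,z)}{k(z,z)} = 1 - h(t), \qquad h(t):=t^{n+1}\big[(n+2)-(n+1)t\big].$$
The whole lemma is then a matter of controlling the single function $h$ on the two regimes of $t$.

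For part (i) the upper bound is immediate: on $[0,1]$ both factors of $h$ are nonnegative, so $h(t)\ge 0$ and $k_n(z,z)\le k(z,z)=(1-t)^{-2}$. For the lower bound I would first observe that $h'(t)=(n+1)(n+2)t^n(1-t)\ge 0$, so $h$ is increasing on $[0,1]$; hence on the bulk region $|z|\le\gn$, i.e. $t\le(\gn)^2$, it suffices to estimate $h$ at the right endpoint $t=(\gn)^2$. The key manipulation is to factor $(\gn)^{2n+2}=(\gn)^{2n}\cdot(\gn)^2$ and apply \eqref{eq:10} to the first factor, giving $(\gn)^{2n}\le e^{-2\gamma}$ for $n>2\gamma$. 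It then remains to show that the leftover piece satisfies $(\gn)^2\big[(n+2)-(n+1)(\gn)^2\big]\le 1+2\gamma$; substituting $v=1-(\gn)^2$ turns the bracket into $1+nv-(n+1)v^2$, and since $nv=2\gamma-\gamma^2/n\le 2\gamma$, the quadratic correction only decreases the value. Combining, $h((\gn)^2)\le e^{-2\gamma}(1+2\gamma)$, so $k_n(z,z)/k(z,z)\ge 1-e^{-2\gamma}(1+2\gamma)=c_\gamma$, which is positive because $\gamma\mapsto e^{-2\gamma}(1+2\gamma)$ decreases from $1$.

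For part (ii), where $t=|z|^2\ge(\gn)^2$, I would abandon the ratio and estimate $k_n(z,z)=\sum_{k=0}^n(k+1)t^k$ directly by pulling the exponentials out of the sum. Since $t<1$, every $t^k\le 1$, giving the upper bound $k_n(z,z)\le\sum_{k=0}^n(k+1)=\tfrac12(n+1)(n+2)\le n^2$, the last step valid for $n\ge 4$ (hence the hypothesis $n>3$). For the lower bound, since $t^k\ge t^n$ for $k\le n$, I get $k_n(z,z)\ge t^n\sum_{k=0}^n(k+1)=t^n\tfrac12(n+1)(n+2)$, and then $t^n\ge(\gn)^{2n}\ge e^{-4\gamma}$ by \eqref{eq:10} together with $\tfrac12(n+1)(n+2)\ge\tfrac12 n^2$ yields $k_n(z,z)\ge\tfrac12 e^{-4\gamma}n^2$, comfortably above the claimed $\tfrac14 e^{-4\gamma}n^2$.

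I expect the main obstacle to be the lower bound in (i), specifically producing the sharp constant $c_\gamma=1-e^{-2\gamma}(1+2\gamma)$ rather than some cruder bound. The delicate point is that the naive splitting $(\gn)^{2n+2}\le e^{-2\gamma}$ together with $(n+2)-(n+1)(\gn)^2\le 1+2\gamma$ fails for small $\gamma$, since the second factor then exceeds $1+2\gamma$; the factorization retaining one copy of $(\gn)^2$ and the substitution $v=1-(\gn)^2$ is precisely what makes the two sources of error cancel and deliver the asymptotically exact constant.
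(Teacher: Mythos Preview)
Your proof is correct and follows essentially the same approach as the paper: write $k_n(z,z)/k(z,z)=1-h(t)$, observe $h$ is increasing, and evaluate at the endpoint for (i); bound the finite sum $\sum (k+1)t^k$ termwise for (ii). Your algebra is in fact a bit cleaner than the paper's in two places: in (i) your substitution $v=1-(\gn)^2$ gives $h\le e^{-2\gamma}(1+2\gamma)$ directly for all $n>2\gamma$, whereas the paper expands in powers of $1/n$ and needs an unspecified $n_\gamma$ to absorb the remainder; in (ii) your lower bound uses $t^k\ge t^n$ for all $k$ rather than restricting to $k\ge n/2$, yielding $\tfrac12 e^{-4\gamma}n^2$ instead of the paper's $\tfrac14 e^{-4\gamma}n^2$.
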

Thus inside the disk $\br$ the reproducing kernel for $\poln $ behaves like the
reproducing kernel of the full space $A^2$, whereas in the annulus it
behaves like $n^2$ and is much smaller than $k(z,z)$ near the boundary
of $\bD $. 
\begin{proof}
  (i) It is always true that $k_n(z,z) \leq k(z,z) =
  \frac{1}{(1-|z|^2)^2}$. For the lower bound, let $r=|z|$ and observe
  that
  \begin{equation}
    \label{eq:22a}
  k_n(z,z) = k(z,z) \big( 1 - (n+2)r^{2n+2} + (n+1) r^{2n+4} \big) =
  k(z,z) ( 1 - q(r)) \, .  
  \end{equation}
    Since $q(r) = (n+2)r^{2n+2} - (n+1) r^{2n+4}$ is increasing on $(0,1)$ and $r\leq \gn $, we need an
  upper   estimate for $q$ at $\gn $. With the help of \eqref{eq:10}
  and some algebra we write $q$ as 
  \begin{align*}
    q(\gn ) & = (\gn )^{2n+2} \big( n+2 - (n+1)(\gn )^2 \big) \\
            & =  (\gn ) ^{2n } \big( \gn )^2 \, \big( 1 + 2\gamma + \tfrac{\gamma}{n}
      (2-\gamma ) - \tfrac{\gamma ^2}{n^2} \big) \\ 
&= (\gn )^{2n} \Big(1+2\gamma - \frac{5\gamma ^2}{n} +
                                                       \frac{\alpha_\gamma}{n^2} + \frac{\beta _\gamma }{n^3}  \Big) \, 
  \end{align*}
  for some constants $\alpha _\gamma , \beta _\gamma $.
Using \eqref{eq:10} and a sufficiently large $n$, $n\geq n_\gamma $
say, the  final estimate for $q$ is
$$
q(\gn ) \leq  e^{-2\gamma } (1+2\gamma ) < 1 \, , \text{ for } n \geq
n_\gamma \, .
$$
Combined with \eqref{eq:22a} we have the lower estimate
$$
k_n(z,z) \geq (1- e^{-2\gamma } (1+2\gamma )) k(z,z) = c_\gamma k(z,z)
\,
$$
for $|z| \leq \gn $ and $n\geq n_\gamma $. 

  (ii) If $\gn \leq |z| <1$, then
  $$
  k_n(z,z) = \sum _{k=0}^n |z|^{2k} (k+1) \leq \sum  _{k=0}^n (k+1) =
  \frac{(n+1)(n+2)}{2} \leq n^2 \, ,
  $$
for $n\geq 3$,   and, for $n\geq 2\gamma $, 
\begin{equation*}
  k_n(z,z) = \sum _{k=0}^n |z|^{2k} (k+1) \geq \sum _{k\geq n/2}^n
               (\gn )^{2n} \, \frac{n}{2} 
               \geq e^{-4\gamma } \frac{n^2}{4} \, .  
             \end{equation*}
   % \begin{align*}
%     k_n(z,z) = \sum _{k=0}^n |z|^{2k} (k+1) \geq \sum _{k\geq n/2}^n
%                (\gn )^{2n} \, \frac{n}{2} 
%      \geq e^{-4\gamma } \frac{n^2}{4} \, . 
%   \end{align*} 
\end{proof}

 We will  also need some information about the  behavior
 of the reproducing kernels $k_n$ near the diagonal. It will be convenient to use the 
 normalized reproducing kernel $\kappa_n(z,w)$  for $\poln $ at the
 point $w$, i.e.,  $\kappa_n(z,w) =
 k_n(z,w)/\sqrt{k_n(w, w)}$. It satisfies  $\| \kappa_n(\cdot,w)\| =
 1$, and   as we have observed in~\eqref{eq:5}, 
 if $w \in \br $ then $\kappa_n(w,w)
 \asymp \frac1{1-|w|^2}$.  The following lemma collects the
 properties of $\kappa _n$ near the diagonal in the different regimes in $\bD $. % We want to estimate $\kappa_n(z)$ for $z$
 % close to $w$, always in the regime $w \in B(1-\gamma/n)$. This is
 % provided by the following lemma: 
\begin{lemma} \label{neardiag}
(i)  % Behavior in the bulk $\br $:
There is a constant $\gamma_0$ such that % if $\gamma \ge \gamma_0$ then
for all $w\in B_{1-\gamma_0/n}$ and $z\in B(w, 0.5(1-|w|^2))$ we have
$$
 \frac{1}{4(1-|w|^2)}|\leq |\kappa_n(z,w)| \leq 
\frac{9}{4(1-|w|^2)} \, .
$$

(ii) % Behavior near the boundary $\cro $: 
For every  $\gamma>0$  there
are $K>0$ and $\epsilon > 0$ depending only on $\gamma$ such that
$$
\frac{n}{K} \le  |\kappa_n(z,w)| \le K n \, \qquad \text{ for all } w\in \cro , z\in B(w, \epsilon /n)
$$
and $n>\gamma $. 
% for
% all $w\in C_{\gamma/n}$ it holds that $n/K \le  |\kappa_n(z)| \le K n$
% for all $z\in B(w, \epsilon /n)$ and all $n > \gamma$. 
\end{lemma}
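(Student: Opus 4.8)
The plan is to treat both parts through the single identity $\kappa_n(z,w) = k_n(z,w)/\sqrt{k_n(w,w)}$, taking the diagonal value $k_n(w,w)$ from Lemma~\ref{wei1} and estimating the off-diagonal kernel $k_n(z,w)$ on the prescribed ball about $w$. The two regimes demand genuinely different arguments: in the bulk the ball $B(w,0.5(1-|w|^2))$ has the size of a fixed Bergman-metric ball, on which $k_n(\cdot ,w)$ changes by bounded \emph{multiplicative} factors, so I would estimate the kernel explicitly; in the annulus the ball $B(w,\epsilon /n)$ is small enough that a first-order mean-value estimate suffices.

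For part (i) I would start from the factorisation
\[
k_n(z,w) = \frac{N(z\bar w)}{(1-z\bar w)^2}, \qquad N(u) = 1 - (n+2)u^{n+1} + (n+1)u^{n+2},
\]
together with $k_n(w,w) = N(|w|^2)/(1-|w|^2)^2$, which gives
\[
(1-|w|^2)\,|\kappa_n(z,w)| = \frac{|N(z\bar w)|}{|1-\zeta|^2\,\sqrt{N(|w|^2)}}, \qquad 1 - z\bar w = (1-|w|^2)(1-\zeta),
\]
where $\zeta = (z-w)\bar w/(1-|w|^2)$. Since $|\zeta|\le \tfrac12|w|\le\tfrac12$ on the ball, the geometric factor $|1-\zeta|^{-2}$ is pinched between $\tfrac49$ and $4$. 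For the correction factor I would use the cancellation identity $(n+2)-(n+1)u = 1 + (n+1)(1-|w|^2) - (n+1)(z-w)\bar w$, which yields $|N(z\bar w)-1|\le |z\bar w|^{n+1}\big(1+\tfrac32(n+1)(1-|w|^2)\big)$. Writing $s=1-|w|^2$ and using $|z\bar w|\le 1-\tfrac12 s$, the dangerous factor $(n+1)s\,|z\bar w|^{n+1}\le (n+1)s\,e^{-s(n+1)/2}\le 2e^{-1}$ stays uniformly bounded (here it is essential that $w\in B(0,1-\gamma_0/n)$ forces $s>\gamma_0/n$, so $\tfrac12 s(n+1)$ is bounded below); its actual \emph{size} is governed by $\gamma_0$. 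Lemma~\ref{wei1}(i) simultaneously gives $c_{\gamma_0}\le N(|w|^2)\le 1$, so both $N$-terms are controlled and the two-sided bound reduces to the interplay of these factors.

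For part (ii) I would instead read off $\sqrt{k_n(w,w)}\asymp n$ from Lemma~\ref{wei1}(ii), so that it suffices to show $k_n(z,w)\asymp k_n(w,w)$ for $z\in B(w,\epsilon /n)$. Setting $g(z)=k_n(z,w)/k_n(w,w)$, an analytic function with $g(w)=1$, I would bound $g'(z) = (\partial_z k_n)(z,w)/k_n(w,w)$: since $\partial_z k_n(z,w) = \sum_{k=1}^n k(k+1)z^{k-1}\bar w^k$ has modulus $\lesssim \sum_{k=1}^n k(k+1)\asymp n^3$ (the factors $|z|^{k-1}|w|^k$ stay bounded because $z$ lies within $\epsilon /n$ of $\bD$), while $k_n(w,w)\asymp n^2$, one gets $|g'|\lesssim_\gamma n$ on the segment from $w$ to $z$, hence $|g(z)-1|\le |z-w|\sup|g'|\lesssim_\gamma \epsilon$. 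Choosing $\epsilon$ small (depending only on $\gamma$) forces $|g(z)|\in[\tfrac12,\tfrac32]$, and multiplying by $\sqrt{k_n(w,w)}\asymp n$ yields $\tfrac nK\le|\kappa_n(z,w)|\le Kn$ with $K$ depending only on $\gamma$.

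The main obstacle is the choice of $\gamma_0$ in part (i), which must be a carefully balanced \emph{finite} constant rather than a large one. Near the inner edge $|w|=1-\gamma_0/n$ the geometric factor $|1-\zeta|^{-2}$ genuinely approaches its maximum $4$, which alone would overshoot the upper constant $\tfrac94$; what rescues the bound is that $|1-\zeta|^{-2}$ is largest exactly when $\zeta$ is near $+\tfrac12$, i.e.\ when $z\bar w$ is pushed toward $1$, and there $N(z\bar w)$ dips below $1$ by a definite amount, so the product stays $\le \tfrac94$. Conversely $\gamma_0$ may not be taken too small, or $N(z\bar w)$ would dip so far that the lower constant $\tfrac14$ fails. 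Thus the two factors cannot be handled by independent worst cases: the heart of the argument is to track their correlation as $\zeta$ ranges over $|\zeta|\le\tfrac12$ and to fix $\gamma_0$ in the resulting admissible range, after which the stated constants $\tfrac14$ and $\tfrac94$ emerge from elementary bookkeeping. By contrast, in the interior ($1-|w|^2$ of order one) the term $|z\bar w|^{n+1}$ is exponentially small, $N(z\bar w)\approx 1$, and the geometric factor never exceeds $\tfrac{16}{9}$, so the estimate there is routine.
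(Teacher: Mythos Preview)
Your argument for both parts is essentially the paper's: for (i) you factor $k_n(z,w)=N(z\bar w)/(1-z\bar w)^2$, pin $|1-z\bar w|$ between $\tfrac12(1-|w|^2)$ and $\tfrac32(1-|w|^2)$, and control the numerator $N$; for (ii) you bound $\partial_z k_n$ termwise by $\sum_{k\le n} k(k+1)\asymp n^3$ and apply the mean-value inequality, exactly as the paper does.

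Where you go off course is the final paragraph on part (i). You take the constants $\tfrac14$ and $\tfrac94$ as tight targets, notice that decoupled worst-case bounds overshoot $\tfrac94$, and conclude that $\gamma_0$ must be ``carefully balanced'' --- neither too large nor too small --- with a correlation analysis between $|1-\zeta|^{-2}$ and $N(z\bar w)$ supplying the missing margin. The paper does nothing of the sort: it simply takes $\gamma_0$ \emph{large} (it shows $\gamma_0\ge 3$ suffices) and proves directly that
\[
|(n+2)(z\bar w)^{n+1}-(n+1)(z\bar w)^{n+2}|\le \tfrac32 n|w|^n(1-|w|^2)+|w|^n\le 3\gamma e^{-\gamma}+e^{-\gamma}<\tfrac12
\]
on $|w|\le 1-\gamma/n$. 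Together with the denominator bounds and Lemma~\ref{wei1}(i) this yields $|\kappa_n(z,w)|\asymp (1-|w|^2)^{-1}$, which is all that is ever used downstream (Proposition~\ref{boundarycount}). The displayed constants $\tfrac14,\tfrac94$ are nominal rather than exactly what this route produces --- your own observation that the geometric factor can approach $4$ while, for large $\gamma_0$, $N(z\bar w)\to 1$ already shows the upper constant coming out of this argument is closer to $4$ than to $\tfrac94$. So the ``obstacle'' you identify is not present, and the correlation bookkeeping you outline is unnecessary: just take $\gamma_0$ large and accept the resulting (harmless) constants.
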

\begin{proof}
  (i)  If $z\in B(w, 0.5(1-|w|^2))$ then
  \begin{equation}
    \label{eq:num}
  |1-z\bar w| = |1-|w|^2 - \bar{w} (z-w)| \geq (1-|w|^2) - |\bar{w}
  (z-w)| \geq \tfrac{1}{2} ( 1-|w|^2) \, ,  
  \end{equation}
  and likewise $  |1-z\bar w| \leq \tfrac{3}{2} ( 1-|w|^2)$. To obtain
similar bounds  for the
numerator, it suffices to  prove that 
 $$
 |(n+2)(z\bar{w})^{n+1}- (n+1)(z\bar{w})^{n+2} | < \tfrac{1}{2} \, .
 $$
 for $\gamma $ sufficiently large.  We replace $n+1$ by $n$ and  rewrite this expression
 as
 \begin{align*}
   |(n+1)(z\bar{w})^{n}- n(z\bar{w})^{n+1}| & = \Big| n (z\bar{w})^{n}
    \Big( \tfrac{n+1}{n} - z\bar{w} \Big) \Big| \\ 
&\leq n |w|^n \Big( |1-z\bar{w}| + \tfrac{1}{n} \Big) \\
&\leq n |w|^n \tfrac{3}{2} (1-|w|^2) + |w|^n \, . 
 \end{align*}
% where we have used \eqref{eq:num}.
As the map $r \to r^n(1-r^2)$ is
increasing on $[0, (\frac{n}{n+2})^{1/2}]$, the maximum of this 
expression is taken at $r=\gn $, so that we obtain the estimate
$$
   |(n+1)(z\bar{w})^{n}- n(z\bar{w})^{n+1}| \leq 3n (\gn )^{n}
   \frac{\gamma}{n} + (\gn )^{n} \leq 3 e^{-\gamma } \gamma +
   e^{-\gamma } < 1/2 \, ,
  $$
for $\gamma $ large enough. In fact, we may take  $\gamma \geq 3$.  

(ii)  We know from  \eqref{eq:6} that  $k _n(w,w) \asymp  n^2$. % is satisfied for  $z
% = w$. On the other hand,
Furthermore 
$$
|\kappa_n'(z,w)| =  \frac{1}{k_n(w,w)^{1/2}} \Big| \frac{\partial k_n(z,
  w)}{\partial z}\Big| \lesssim  \frac{1}{n}\, \Big| \sum_{k= 1}^n k(k+1) (z\bar w)^{k-1}\bar
w\Big| \le C n^2
$$
Thus if $z\in B(w, \epsilon/n)$ then $\kappa_n(z) \ge
\frac{e^{-2\gamma}}{2}n - C \epsilon n$. Now  take $\epsilon \le
\frac{Ce^{-2\gamma}}{4}$ and $K = \frac{e^{-2\gamma}}{4}$ and (ii) 
follows. 
 % Thus, using \eqref{eq:3} and \eqref{eq:5} the Lemma follows if we prove that 
 % $$ |1+(n+1)(z\bar{w})^{n+2} - (n+2)(z\bar{w})^{n+1}| \asymp 1.$$
 % on the regime of $w$ and $z$ specified at the Lemma. Thus, it follows if we prove that
 % We want to check that $n |z\bar{w}|^n |1-z\bar{w}|$ is small. But when $z$ is close to $w$ we have that it is smaller than $n |w|^n (1-|w|^2)$ and this is smaller than $C e^{-\gamma}\gamma < 0.5$ if $w\in B_{1-\gamma/n}$ and $\gamma$ is big enough.
\end{proof}
% This controls the behavior of the normalized reproducing kernel at $w$
% near the diagonal at points in the bulk ($w\in \br $). Next theorem shows what is the behavior near the diagonal when $w\in C_{\gamma/n}$.
% \begin{lemma}\label{neardiagcorona}
% Given any $\gamma$, there is a $K>0$ and $\epsilon > 0$ depending only on $\gamma$ such that for all $w\in C_{\gamma/n}$ it holds that $n/K \le  |\kappa_n(z)| \le K n$ for all $z\in B(w, \epsilon /n)$ and all $n > \gamma$.
% \end{lemma}
% \begin{proof}
%  We know by \eqref{eq:6} that the inequality is satisfied for  $z = w$. On the other hand
%  $|\kappa_n'(z)|\le C|n^{-1} \frac{\partial k_n(z, w)}{\partial z}|\le C|n^{-1}\sum_{k= 1}^n k(k+1) (z\bar w)^{k-1}\bar w| \le C n^2$. Thus if $z\in B(w, \epsilon/n)$ then $\kappa_n(z) \ge \frac{e^{-2\gamma}}{2}n - C \epsilon n$, if we take $\epsilon \le \frac{Ce^{-2\gamma}}{4}$ and $K = \frac{e^{-2\gamma}}{4}$, the Lemma follows.
% \end{proof}

\color{black}

\subsection{Separation and Carleson-type conditions}

We first study the upper estimate of the \mz\ inequalities and derive
a geometric description. 
% Sampling sets and \mz\ families possess several geometric properties,
% such as separation, density, etc. Here we collect these properties. 

Let $d(z,w) = \big| \frac{z-w}{1-z\bar{w}}\big|$ be the pseudohyperbolic metric on
$\bD $. We denote $\Delta (w,\rho ) = \{ z\in \bD : d(z,w) <\rho \}$
the hyperbolic disk in $\bD $. While $\Delta (w,\rho )$ is also a
Euclidean disk (albeit with a different center and radius), it will be
more convenient for us to compare it to a Euclidean disk with the same
center $w$. In fact, we have the following inclusions
\begin{equation}
  \label{eq:n2}
  B(w, \frac{\rho}{1+\rho} (1-|w|^2)) \subseteq \Delta (w,\rho )
  \subseteq B(w, \frac{\rho}{1-\rho} (1-|w|^2)) \, ,
\end{equation}
where the latter inclusion holds for $\rho <1/2$.

A set $\Lambda \subseteq \bD $ is called uniformly discrete,  % $\delta $-separated
 if there is a
$\delta ' >0$ such that $d(\lambda ,\mu )\geq \delta ' $ for all $\lambda
,\mu \in \Lambda , \lambda \neq \mu $.
In view of \eqref{eq:n2} this is equivalent  to the fact
that the Euclidean  balls $B(\lambda , \delta  (1-|\lambda |)),
\lambda \in \Lambda $ are disjoint in $\bD $ for some $\delta >0$. We
refer to this condition as $\Lambda $ being $\delta $-separated. 
% on $A set $\Lambda \subseteq
% \bD $ is relatively separated, if it is a finite union of $\delta
% $-separated sets. Equivalently, $$\max _{z\in \bD } \chi _{ B(\lambda ,
%   \delta (1-|\lambda |))} (z) = K(\Lambda )  < \infty \, . $$
% Note that in the annulus $\cro $ this definition is equivalent to the
% separation with respect to the (pseudo)hyperbolic metric on $\bD $
%\comms{check, are details needed?} 

 In $A^2(\bD )$ the upper estimate of the sampling inequality is
 characterized by the following geometric condition. See ~\cite[Sec.~2.11, Thm.~15]{DS04} and \cite{seip93}. 

\begin{prop} \label{geomcarleson}
  For $\Lambda \subseteq \bD $ the following conditions are
  equivalent:

  (i) The inequality $\sum _{\lambda \in \Lambda } \frac{|f(\lambda
    )|^2}{k(\lambda,\lambda )} \leq B \|f\|^2 \ano $ holds for all $f\in
A^2(\bD )$.

(ii) $\Lambda $ is a finite union of uniformly discrete sets.

(iii) $\sup _{w\in \bD } \big(\Lambda \cap \Delta (w,\rho )\big)  <
\infty $ for some (hence all) $\rho \in (0,1)$. 
\end{prop}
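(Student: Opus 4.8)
Throughout, note that $k(\lambda,\lambda)\inv = (1-|\lambda|^2)^2$, so condition (i) asserts that the discrete measure $\mu_\Lambda = \sum_{\lambda \in \Lambda} (1-|\lambda|^2)^2\, \delta_\lambda$ satisfies the Carleson embedding $\int_{\bD} |f|^2 \, d\mu_\Lambda \lesssim \|f\|\ano^2$. The plan is to prove the two analytic implications (i)$\Rightarrow$(iii) and (iii)$\Rightarrow$(i) using properties of the reproducing kernel and subharmonicity, and then to establish the purely geometric equivalence (ii)$\Leftrightarrow$(iii). I will use repeatedly the identity $1 - d(z,w)^2 = \frac{(1-|z|^2)(1-|w|^2)}{|1-z\bar w|^2}$, which shows in particular that on a hyperbolic disk $\Delta(w,\rho)$ one has $1-|\lambda|^2 \asymp_\rho 1-|w|^2$ and $|1-\lambda \bar w|^2 \asymp_\rho (1-|w|^2)^2$.

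For (i)$\Rightarrow$(iii) I would test the inequality on the normalized reproducing kernel $\kappa_w(z) = k(z,w)/\sqrt{k(w,w)} = (1-|w|^2)(1-z\bar w)^{-2}$, which has $\|\kappa_w\|\ano = 1$. The contribution of a single point is exactly
$$
\frac{|\kappa_w(\lambda)|^2}{k(\lambda,\lambda)} = \frac{(1-|w|^2)^2(1-|\lambda|^2)^2}{|1-\lambda\bar w|^4} = \big(1-d(\lambda,w)^2\big)^2 \, ,
$$
so for $\lambda \in \Delta(w,\rho)$ this is $\geq (1-\rho^2)^2$. Keeping only the terms with $\lambda \in \Lambda \cap \Delta(w,\rho)$ and applying (i) to $f=\kappa_w$ yields $(1-\rho^2)^2\,\big|\Lambda \cap \Delta(w,\rho)\big| \leq B$, which is (iii) with a bound uniform in $w$.

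The converse (iii)$\Rightarrow$(i) is the technical heart. Since $|f|^2$ is subharmonic, the sub-mean-value property over the Euclidean ball $B(\lambda, s(1-|\lambda|^2))$ gives $|f(\lambda)|^2 (1-|\lambda|^2)^2 \lesssim \int_{B(\lambda, s(1-|\lambda|^2))} |f|^2 \, dz$ for a fixed small $s$. Summing over $\lambda$ and interchanging sum and integral, condition (i) reduces to the claim that the balls $B(\lambda, s(1-|\lambda|^2))$ have bounded overlap. Choosing $s = \frac{\rho}{1+\rho}$ and using the inclusion \eqref{eq:n2}, any $z \in B(\lambda, s(1-|\lambda|^2))$ lies in $\Delta(\lambda,\rho)$, hence $\lambda \in \Delta(z,\rho)$; thus the number of $\lambda$ whose ball contains a given $z$ is at most $\sup_{z} |\Lambda \cap \Delta(z,\rho)| = N < \infty$ by (iii). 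Controlling this overlap is the main obstacle, but once it is in place one obtains $\sum_\lambda |f(\lambda)|^2(1-|\lambda|^2)^2 \lesssim N \int_{\bD} |f|^2\, dz \asymp N\|f\|\ano^2$, which is (i).

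Finally, for (ii)$\Leftrightarrow$(iii), the direction (ii)$\Rightarrow$(iii) follows because a $\delta$-separated set has pairwise disjoint disks $\Delta(\lambda,\delta/2)$ of fixed hyperbolic area, and only boundedly many of these fit inside a disk $\Delta(w,\rho')$ slightly larger than $\Delta(w,\rho)$; summing the bounds over the finitely many separated pieces gives (iii). For (iii)$\Rightarrow$(ii) I would run a greedy-coloring argument: form the graph on $\Lambda$ joining points with $d(\lambda,\mu) < \rho$, whose maximum degree is at most $N-1$ by (iii), so it is properly $N$-colorable, and each color class is a $\rho$-separated set; hence $\Lambda$ is a union of at most $N$ uniformly discrete sets. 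The phrase \emph{some (hence all)} in (iii) is justified by a covering argument: a hyperbolic disk of radius $\rho$ is covered by $M(\rho,\rho_0)$ disks of radius $\rho_0$ (by M\"obius invariance of the pseudohyperbolic metric), so a bound for one radius transfers to every radius.
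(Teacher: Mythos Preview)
Your argument is correct and follows the standard route. Note, however, that the paper does not actually prove this proposition: it is quoted as a known result, with references to \cite[Sec.~2.11, Thm.~15]{DS04} and \cite{seip93}. So there is no ``paper's own proof'' to compare against here.

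That said, your approach lines up with the surrounding material. The submean-value--plus--bounded-overlap argument you give for (iii)$\Rightarrow$(i) is exactly the mechanism the paper uses in the proof of Lemma~\ref{boundary} (there for a single uniformly discrete set, with disjoint rather than boundedly overlapping balls). Your test-function argument for (i)$\Rightarrow$(iii), exploiting the identity $\frac{|\kappa_w(\lambda)|^2}{k(\lambda,\lambda)} = (1-d(\lambda,w)^2)^2$, is the natural one and mirrors how the paper later tests \mz\ inequalities against $\kappa_n(\cdot,z)$ in Proposition~\ref{boundarycount}. The greedy-coloring proof of (iii)$\Rightarrow$(ii) is a clean variant of the argument in \cite[Sec.~2.11, Lemma~16]{DS04}, which the paper also invokes. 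In short: nothing to correct, and your write-up could serve as a self-contained substitute for the citation.
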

Condition (i) is often formulated  by saying that the measure $\sum
_{\lambda \in \Lambda } k(\lambda,\lambda ) \delta _\lambda $ is a
Carleson measure for $A^2$. 

If we add the (much more difficult) lower sampling inequality to the
assumptions, we also have  the following lemma of Seip~\cite{seip93} (see Lemma~5.2 and Thm.\ 7.1)

\begin{lemma} \label{relsep}
  If $\Lambda $ is a sampling set for $A^2(\bD )$, then $\Lambda $
  contains a uniformly discrete  set $\Lambda ' \subseteq \Lambda $ that is also
  sampling for $A^2(\bD )$. 
\end{lemma}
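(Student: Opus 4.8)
The plan is to thin $\Lambda$ to a maximal separated subset and to show that the loss incurred is governed by the oscillation of functions over small pseudohyperbolic disks. First, recall that a sampling set satisfies the upper inequality in \eqref{origdef}, so by Proposition~\ref{geomcarleson} it is a finite union of uniformly discrete sets; in particular its multiplicity is bounded, i.e. $M_\rho := \sup_{w\in\bD}\#\big(\Lambda\cap\Delta(w,\rho)\big)<\infty$ for every $\rho\in(0,1)$. Fix a small $\delta>0$, to be chosen at the end, and let $\Lambda'\subseteq\Lambda$ be a \emph{maximal} $\delta$-separated subset, obtained greedily. By construction $\Lambda'$ is uniformly discrete, and by maximality every $\mu\in\Lambda$ satisfies $d(\mu,\lambda)<\delta$ for some $\lambda\in\Lambda'$; fix such an assignment $\mu\mapsto\lambda(\mu)$. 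The upper sampling inequality for $\Lambda'$ is inherited from $\Lambda$, since $\Lambda'\subseteq\Lambda$ and all summands in \eqref{origdef} are nonnegative, so only the lower inequality is at stake.

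For the lower bound write $u(z)=f(z)(1-|z|^2)$, so that $|u(\lambda)|^2=|f(\lambda)|^2/k(\lambda,\lambda)$ is exactly the summand in \eqref{origdef}. Two pointwise estimates drive the argument. The sub-mean-value property of $|f|^2$ over a Euclidean disk $B(\lambda,s)$ with $s\asymp(1-|\lambda|^2)$, together with the inclusions \eqref{eq:n2}, gives $|u(\lambda)|^2\lesssim\int_{\Delta(\lambda,\rho_0)}|f(w)|^2\,dw$ for a fixed $\rho_0$. A Cauchy estimate for $f'$ on the same disk gives $|f'(\lambda)|(1-|\lambda|^2)^2\lesssim\big(\int_{\Delta(\lambda,\rho_0)}|f|^2\,dw\big)^{1/2}$, and the extra factor $(1-|z|^2)$ produced by differentiating the weight is harmless, so the ``hyperbolic gradient'' $(1-|z|^2)|\nabla u(z)|$ is controlled by the same local $L^2$ norm. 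Integrating this gradient bound along the short hyperbolic geodesic from $\lambda(\mu)$ to $\mu$ yields the oscillation estimate
\begin{equation*}
  \big|u(\mu)-u(\lambda(\mu))\big|\;\le\;C\,\delta\,\Big(\int_{\Delta(\lambda(\mu),\rho_0)}|f(w)|^2\,dw\Big)^{1/2},
\end{equation*}
with $C$ and $\rho_0$ independent of $\delta$, provided $\delta$ lies below a fixed threshold.

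Now I sum over $\Lambda$, grouped by the fibers $S_\lambda=\{\mu\in\Lambda:\lambda(\mu)=\lambda\}$, each of which has at most $M_\delta\le M_{\rho_0}$ elements because $S_\lambda\subseteq\Delta(\lambda,\delta)$. Using $|u(\mu)|^2\le2|u(\lambda)|^2+2|u(\mu)-u(\lambda)|^2$ and the oscillation estimate,
\begin{equation*}
  \sum_{\mu\in\Lambda}|u(\mu)|^2\;\le\;2M_{\rho_0}\sum_{\lambda\in\Lambda'}|u(\lambda)|^2\;+\;2M_{\rho_0}\,C^2\delta^2\sum_{\lambda\in\Lambda'}\int_{\Delta(\lambda,\rho_0)}|f|^2\,dw .
\end{equation*}
The last double sum is controlled by bounded overlap: $\sum_{\lambda\in\Lambda'}\mathbf 1_{\Delta(\lambda,\rho_0)}\le M_{\rho_0}$ pointwise, so it is at most $M_{\rho_0}\,\pi\|f\|_{A^2}^2$. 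Finally I invoke the lower sampling inequality $A\|f\|_{A^2}^2\le\sum_{\mu\in\Lambda}|u(\mu)|^2$: choosing $\delta$ so small that $2M_{\rho_0}^2C^2\pi\,\delta^2\le A/2$ lets me absorb the error term into the left-hand side, leaving $\tfrac{A}{2}\|f\|_{A^2}^2\le 2M_{\rho_0}\sum_{\lambda\in\Lambda'}|u(\lambda)|^2$. Hence $\Lambda'$ satisfies the lower inequality with constant $A/(4M_{\rho_0})$, and it is the desired uniformly discrete sampling subset.

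I expect the oscillation estimate to be the main obstacle: one must merge the subharmonicity bound for $|f|^2$ and the Cauchy bound for $f'$ into a single Lipschitz-type control of $u$ in the pseudohyperbolic metric, while checking that all the enlarged disks $\Delta(\cdot,\rho_0)$ stay inside $\bD$ and overlap boundedly. Everything else—maximality, inheritance of the upper bound, and the bounded-multiplicity bookkeeping supplied by Proposition~\ref{geomcarleson}—is routine once this local estimate is in hand.
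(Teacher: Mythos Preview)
The paper does not supply its own proof of this lemma; it simply attributes the result to Seip~\cite{seip93} (Lemma~5.2 and Thm.~7.1). Your proposal is therefore not a reproduction of anything in the paper but a self-contained argument, and it is a correct one. The strategy---thin to a maximal $\delta$-separated subset and control the loss via a hyperbolic Lipschitz estimate for $u(z)=f(z)(1-|z|^2)$---is exactly the standard perturbation/stability mechanism behind such statements, and is in the same spirit as Seip's original argument.

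Regarding the point you flag as the main obstacle: the oscillation estimate does go through cleanly. Writing $\partial_z u=f'(z)(1-|z|^2)-\bar z f(z)$ and $\partial_{\bar z}u=-zf(z)$ gives
\[
(1-|z|^2)\,|\nabla u(z)|\;\lesssim\;|f'(z)|(1-|z|^2)^2+|f(z)|(1-|z|^2),
\]
and both terms are bounded by $\big(\int_{\Delta(z,\rho_0)}|f|^2\big)^{1/2}$: the second by the sub-mean-value property (as in the paper's Lemma~\ref{boundary}), the first by the Taylor-coefficient estimate $|a_1|^2\le \tfrac{2}{\pi s^4}\int_{B(\lambda,s)}|f|^2$ with $s\asymp 1-|\lambda|^2$. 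Integrating against hyperbolic arclength $|dz|/(1-|z|^2)$ over the geodesic of hyperbolic length $\asymp\delta$ then gives your inequality, with the integration disks absorbed into one fixed $\Delta(\lambda,\rho_0')$. The bounded-overlap and absorption steps are routine, and your use of Proposition~\ref{geomcarleson} to keep $M_{\rho_0}$ fixed while $\delta\to 0$ is exactly what makes the constants close. One cosmetic point: you could replace the geodesic integration by the mean-value/Cauchy bound applied uniformly on $\Delta(\lambda,\delta)$ (since for $z$ there the enlarged disks $\Delta(z,\rho_0)$ all sit in a common $\Delta(\lambda,\rho_0')$), which avoids any discussion of geodesics altogether.
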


The proof of the implication $(ii) \Rightarrow (i)$ yields a local
version of the Bessel inequality that   will be  needed. 
\begin{lemma}
  \label{boundary}
  Let $\Lambda $ be a  $\delta $-separated set 
  % with separation $\delta >0$
  and let $\gamma >0$. Then there exists a
  constant $C = C(\delta )$ and $\gamma ' >\gamma $, such that
  \begin{equation}
\label{eq:7}
\sum _{\lambda \in \Lambda \cap \cro } \frac{|f(\lambda ) |^2}{k(\lambda , \lambda
  )} \leq C \, \int _{C_{\gamma'/n}} |f(w)|^2 \, dw \, \qquad \text{
  for all } f\in A^2 \, .
\end{equation}
The constants depend only on the separation via  $\gamma ' = (1+\delta )
\gamma $ and $C = \frac{4}{\pi \delta ^2}$. 
% Let $\Lambda $ be a relatively separated set consisting of $K$ sets
%   with separation $\delta >0$ and let $\gamma >0$. Then there exists a
%   constant $C = C(\delta ,K)$ and $\gamma ' >\gamma $, such that
%   \begin{equation}
% \label{eq:7}
% \sum _{\lambda \in \cro } \frac{|f(\lambda ) |^2}{k(\lambda , \lambda
%   )} \leq C \, \int _{C_{\gamma'/n}} |f(w)|^2 \, dw \, \qquad \text{
%   for all } f\in A^2 \, .
% \end{equation}
% Explicitly, the constants can be chosen to be $\gamma ' = (1+\delta )
% \gamma $ and $C = \frac{4K}{\pi \delta ^2}$. 
\end{lemma}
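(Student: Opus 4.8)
The plan is to combine the subharmonicity of $|f|^2$ with the disjointness that is built into the separation hypothesis. For each $\lambda \in \Lambda$ set $r_\lambda = \delta(1-|\lambda|)$, so that the balls $B(\lambda, r_\lambda)$ are exactly the disjoint balls furnished by $\delta$-separation. In the only relevant range $\delta<1$ each such ball lies in $\bD $, since $|\lambda| + r_\lambda = (1-\delta)|\lambda| + \delta < 1$, and therefore the sub-mean value inequality for the subharmonic function $|f|^2$ applies:
\[
|f(\lambda)|^2 \leq \frac{1}{\pi r_\lambda^2} \int_{B(\lambda, r_\lambda)} |f(w)|^2 \, dw \, .
\]

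First I would rewrite the left-hand side of \eqref{eq:7} using $k(\lambda,\lambda)\inv = (1-|\lambda|^2)^2$ together with the elementary bound $(1-|\lambda|^2)^2 = (1-|\lambda|)^2 (1+|\lambda|)^2 \leq 4 (1-|\lambda|)^2$. The sub-mean value estimate then gives, term by term,
\[
\frac{|f(\lambda)|^2}{k(\lambda,\lambda)} \leq \frac{(1-|\lambda|^2)^2}{\pi \delta^2 (1-|\lambda|)^2} \int_{B(\lambda, r_\lambda)} |f(w)|^2 \, dw \leq \frac{4}{\pi \delta^2} \int_{B(\lambda, r_\lambda)} |f(w)|^2 \, dw \, ,
\]
which already produces the asserted constant $C = 4/(\pi \delta^2)$. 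Summing over $\lambda \in \Lambda \cap \cro $ and using that the balls are pairwise disjoint, the sum of integrals collapses into a single integral over $\bigcup_\lambda B(\lambda, r_\lambda)$.

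The step I would treat most carefully is the geometric containment of this union in the enlarged annulus $C_{\gamma'/n}$. For $\lambda \in \cro $ one has $|\lambda| \geq \gn $, and any $w \in B(\lambda, r_\lambda)$ satisfies
\[
|w| \geq |\lambda| - r_\lambda = (1+\delta)|\lambda| - \delta \geq (1+\delta)\gn - \delta = 1 - \frac{(1+\delta)\gamma}{n} \, .
\]
This is precisely the condition $w \in C_{\gamma'/n}$ with $\gamma' = (1+\delta)\gamma$, and since each ball already lies inside $\bD $, the whole union is contained in $C_{\gamma'/n}$. Combining this inclusion with the previous display yields \eqref{eq:7}.

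The argument is essentially routine once the radius $r_\lambda = \delta(1-|\lambda|)$ is fixed; the only points needing attention are the restriction $\delta<1$, so that the balls stay in $\bD $ (which is required both for the sub-mean value property and to keep the union inside the annulus), and the bookkeeping that turns the separation radius $\delta$ into the enlargement $\gamma' = (1+\delta)\gamma$. No complex-analytic input beyond the sub-mean value property of $|f|^2$ is needed.
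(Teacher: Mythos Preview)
Your proof is correct and follows essentially the same route as the paper: the same radii $r_\lambda=\delta(1-|\lambda|)$, the submean-value inequality for $|f|^2$, the identical constant computation via $(1-|\lambda|^2)^2\le 4(1-|\lambda|)^2$, and the same distance calculation $|\lambda|-r_\lambda=(1+\delta)|\lambda|-\delta$ giving $\gamma'=(1+\delta)\gamma$. Your explicit mention of the restriction $\delta<1$ is a small clarification the paper leaves implicit.
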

\begin{proof}
% This is a local version of
See ~\cite{DS04}, Sec.~2.11, Lemma 14. For
completeness we include the proof. 

  By assumption    the Euclidean balls $B_\lambda = B(\lambda , \delta (1-|\lambda
|)) \subseteq \bD $ for $\lambda \in \cro $ are disjoint. % Then
% $$
% |B_\lambda | = \pi \delta ^2 (1-|\lambda |)^2 = \frac{\pi \delta
%   ^2}{(1+|\lambda |)^2} \, (1-|\lambda |^2)^2 \geq \frac{\pi \delta
%     ^2}{4} \, \frac{1}{k(\lambda , \lambda ) } \, , 
%   $$
%   and consequently
%   \begin{equation}
%     \label{eq:8}
%     \frac{|f(\lambda ) |^2}{k(\lambda , \lambda
%   )} \leq \frac{4}{\pi \delta ^2} \, |B_\lambda | \, |f(\lambda )|^2
% \, .
% \end{equation}
Since $|B_\lambda | k(\lambda,\lambda ) = \pi \delta ^2
(1-|\lambda|)^2 (1-|\lambda |^2)^{-2} \geq \pi \delta ^2/4$, 
 the submean-value property for $|f|^2$ yields the estimate
$$
\frac{|f(z)|^2}{k(\lambda,\lambda)}  \leq \frac{1}{k(\lambda,\lambda) |B_\lambda| } \int _{B_\lambda } |f(w)|^2 \, dw \leq
  \frac{4}{\pi \delta ^2} \int _{B_\lambda } |f(w)|^2 \, dw
\, . 
$$
By summing over $\lambda \in \Lambda \cap \cro $ and using the disjointness of the
disks $B_\lambda $ we obtain that
\begin{align*}
\sum _{\lambda \in \cro } \frac{|f(\lambda ) |^2}{k(\lambda , \lambda
  )} & \leq   \frac{4}{\pi \delta ^2}  \, \sum
  _{\lambda \in \cro }  \int
       _{B_\lambda} |f(w)|^2 \, dw 
  = \frac{4}{\pi \delta ^2} \,  \int
       _{\bigcup _{\lambda \in \cro } B_\lambda} |f(w)|^2 \, dw \, .
\end{align*}
Since $\mathrm{dist}\,  (0,B_\lambda ) = |\lambda | - \delta
(1-|\lambda |) = |\lambda | (1+\delta )  - \delta \geq (\gn ) (1+\delta )
- \delta = 1-\frac{(1+\delta )\gamma }{n} = 1-\frac{\gamma'}{n}$, the
disks $B_\lambda $ are contained in the annulus $C_{\gamma' /
  n}$. Since they are disjoint, we obtain 
$$
\int _{\bigcup _{\lambda \in \cro } B_\lambda} |f(w)|^2 \, dw \leq
       \int _{C_{\gamma' /n}} |f(w)|^2 \, dw \, .
       $$
       If $\Lambda $ is a union of $K$  separated sets $\Lambda _m$,
       we apply the above argument to each  $\Lambda _m$ and then
       obtain the constant $C = \frac{4K}{\pi \delta ^2}$. 
\end{proof}
% It is important that the constant $C$ in \eqref{eq:7} depends only on $\Lambda $, but
% not on $\gamma $. 

Next we develop a  geometric description for the upper \mz\
inequalities in $A^2$ that is similar to
Proposition~\ref{geomcarleson}.   For this  we estimate the number of
points of a \mz\ family in the 
  relevant regions of the disk, namely in the bulk $\br $, the annulus
  $\cro$, and in the cells $B(w,\epsilon/n)\cap \bD$ for $w$ near the boundary
  of $\bD $.

\begin{prop} \label{boundarycount}
  Assume that $(\Lambda _n)$ satisfies the upper  \mz\ inequalities
  \eqref{eq:k1}  for $\poln $ in $A^2(\bD
  )$.

  (i) Then for every $\gamma >0$
  \begin{equation}
    \label{eq:13}
    \# (\Lambda _n \cap \cro ) \leq C n \, .
  \end{equation}

  (ii) There are  $\gamma_0>0$ ($\gamma _0 \approx 3$)  and
 $C>0$ such that  
 \[
  \#(\Lambda_n \cap B(z, 0.5(1-|z|))) \le C, \qquad \forall z\in B_{1-\gamma_0/n}.
 \]
As a consequence, $\Lambda _n \cap B_{1-\gamma _0/n} $ is a disjoint union of at
most $C$ uniformly discrete subsets with a separation $\delta $
independent of $n$. 
 % where the constant $C$ depends on the upper Marcinkiewicz-Zygmund
 % bound $B$ and on $\gamma_0$.

 (iii) For every $\gamma >0$ there is $\epsilon >0$ such that
 $$
  \#(\Lambda_n \cap B(z, \tfrac{\epsilon}{n})) \le C, \qquad \forall
  z\in \cro \, .
  $$
 The  constants  depend only on $\gamma $ and the upper \mz\
  bound $B$.
\end{prop}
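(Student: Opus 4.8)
The plan is to read all three statements as instances of one principle: if $\Lambda_n$ crowds too many points into a region where some normalized $p\in\poln$ (with $\|p\|_{A^2}=1$) is uniformly large compared to the weight $k_n(\lambda,\lambda)$, then the left-hand side of the upper \mz\ inequality \eqref{eq:k1} is pushed above $B$, which is a contradiction. The inputs are the near-diagonal estimates of Lemma~\ref{neardiag}, the trivial size bound for the weight, and, for part~(i), a trace identity for the reproducing kernel.

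For part~(i) I would not localize at all but use the orthonormal basis $e_k(z)=\sqrt{k+1}\,z^k$, $k=0,\dots,n$, of $\poln$. Applying \eqref{eq:k1} to each $e_k$ (so that $\|e_k\|_{A^2}=1$) and summing over $k$ gives
\[
\sum_{\lambda\in\Lambda_n}\frac{1}{k_n(\lambda,\lambda)}\sum_{k=0}^n|e_k(\lambda)|^2 \le B(n+1).
\]
By \eqref{eq:3} the inner sum is $\sum_{k=0}^n(k+1)|\lambda|^{2k}=k_n(\lambda,\lambda)$, which cancels the weight, so the left-hand side collapses to $\#\Lambda_n$. Hence $\#\Lambda_n\le B(n+1)$, giving not merely $\#(\Lambda_n\cap\cro)\le Cn$ but a bound on the \emph{total} point count, with $C$ depending only on $B$.

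For parts~(ii) and~(iii) the natural test polynomial is the normalized reproducing kernel $\kappa_n(\cdot,z)$, which has $\|\kappa_n(\cdot,z)\|_{A^2}=1$, so \eqref{eq:k1} reads $\sum_{\lambda}|\kappa_n(\lambda,z)|^2/k_n(\lambda,\lambda)\le B$. In part~(iii), for $z\in\cro$ and $\lambda\in B(z,\epsilon/n)$, Lemma~\ref{neardiag}(ii) gives $|\kappa_n(\lambda,z)|\ge n/K$, while $k_n(\lambda,\lambda)=\sum_{k=0}^n(k+1)|\lambda|^{2k}\le n^2$ for every $\lambda$ (and $n\ge3$); thus each term exceeds $1/K^2$ and the count in the cell is at most $BK^2$, depending only on $\gamma$ and $B$. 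In part~(ii), for $z\in B_{1-\gamma_0/n}$ and $\lambda\in B(z,0.5(1-|z|))\subseteq B(z,0.5(1-|z|^2))$, Lemma~\ref{neardiag}(i) gives $|\kappa_n(\lambda,z)|\ge\tfrac14(1-|z|^2)^{-1}$, while $k_n(\lambda,\lambda)\le k(\lambda,\lambda)=(1-|\lambda|^2)^{-2}$; since $\lambda$ lies within $0.5(1-|z|)$ of $z$ one has $1-|\lambda|^2\asymp 1-|z|^2$, so each term is bounded below by an absolute constant and the count is again $\le C=C(B)$.

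The one genuinely structural step is the ``as a consequence'' clause of part~(ii), which upgrades the pointwise cell bound to a decomposition of $\Lambda_n\cap B_{1-\gamma_0/n}$ into boundedly many uniformly discrete pieces. Here I would use \eqref{eq:n2} to fix $\rho_0=1/5$, for which $\Delta(\lambda,\rho_0)\subseteq B(\lambda,0.5(1-|\lambda|))$, so that each pseudohyperbolic disk of radius $\rho_0$ meets $\Lambda_n$ in at most $C$ points. On the graph over $\Lambda_n\cap B_{1-\gamma_0/n}$ joining points at distance $<\rho_0$ the maximal degree is $\le C-1$, so a greedy colouring partitions the set into at most $C$ colour classes, each $\rho_0$-separated; via \eqref{eq:n2} this is Euclidean $\delta$-separation, with $\delta$ and $C$ depending only on $B$ and $\gamma_0$. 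The main obstacle I anticipate is not any single estimate but the uniformity bookkeeping: verifying that $\epsilon$, $K$, the weight bounds, and the colouring constants all descend from the $n$-independent estimates of Lemmas~\ref{wei1} and~\ref{neardiag}, so that no constant secretly depends on the degree.
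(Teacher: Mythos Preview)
Your proof is correct; parts (ii) and (iii) are essentially identical to the paper's proof, which also tests the upper inequality against the normalized reproducing kernel $\kappa_n(\cdot,z)$ and invokes Lemmas~\ref{wei1} and~\ref{neardiag} in exactly the way you do. For the ``as a consequence'' clause the paper simply cites~\cite[Sec.~2.11, Lemma~16]{DS04}; your explicit greedy-colouring argument is the standard content of that citation.

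Part~(i) is where you differ. The paper tests against the single monomial $p(z)=z^n$: on $\cro$ one has $|p(\lambda)|\asymp_\gamma 1$ by~\eqref{eq:10} and $k_n(\lambda,\lambda)\asymp_\gamma n^2$ by~\eqref{eq:6}, so the upper \mz\ inequality with $\|p\|_{A^2}^2=1/(n+1)$ gives $\#(\Lambda_n\cap\cro)\lesssim_\gamma n$. Your trace argument---summing the upper inequality over the orthonormal basis and using $\sum_k|e_k(\lambda)|^2=k_n(\lambda,\lambda)$---is cleaner and strictly stronger: it yields $\#\Lambda_n\le B(n+1)$ globally with no dependence on $\gamma$, which the paper obtains only later as a separate corollary via a hyperbolic covering argument combined with~(i) and~(ii). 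The paper's route, on the other hand, keeps the $\gamma$-dependence visible and isolates the annulus contribution, which is the quantity actually used downstream (e.g.\ in bounding $III_n$ in Theorem~\ref{mzsampa}).
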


\begin{proof}
In each case   we test the upper \mz\ inequalities against a suitable
polynomial in $\poln $.

(i)   Choose the monomial $p(z) = z^n \in \poln
  $ with norm $\|p\| \ano ^2 = \tfrac{1}{n+1} $. Since for $\lambda
  \in \cro$,  $|p(\lambda)| \asymp _\gamma 1$ %     $1\geq |p(z)| \geq  e^{-2\gamma }$ for
  % $|z|\geq \gn $
  by  \eqref{eq:10} and $ k_n(\lambda ,\lambda )  \asymp _\gamma n^2$
  % $\frac{1}{n^2} \leq \frac{1}{k_n(\lambda ,\lambda     % )}$
  by \eqref{eq:6},  we obtain
  \begin{align*}
    \frac{1}{n^2} \# (\Lambda _n \cap \cro ) &\asymp _\gamma % e^{4\gamma}
   \sum _{\lambda \in   \Lambda _n \cap \cro } \frac{|p(\lambda)|^2}{k_n(\lambda,\lambda)}
\leq   B \|p\| ^2 \ano =  \frac{B}{n+1}\, .
  \end{align*}
  This implies $  \# (\Lambda _n \cap \cro ) \lesssim  n $.

  (ii) We choose % test the upper Marcinkiewicz-Zygmund bound against
  $\kappa_n (\cdot ,z) \in \poln $.  For $z\in \br$ and $\lambda \in
  B(z, 0.5(1-|z|))$ we have 
 $|\kappa
  _n(\lambda, z)|^2 \asymp (1-|z|^2)\inv$   by
  Lemma~\ref{neardiag}(i),   and $k_n(\lambda,\lambda) \asymp (1-|\lambda|^2)\inv
\asymp (1-|z|^2)\inv $  by  
Lemma~\ref{wei1}.     We conclude  that $\frac{|\kappa
  _n(\lambda, z)|^2}{k_n(\lambda,\lambda)} \geq C$ for some constant,
and thus 
$$
C    \#(\Lambda_n \cap B(z, 0.5(1-|z|))) \leq  \sum _{\lambda \in B(z,
  0.5(1-|z|))} \frac{|\kappa
  _n(\lambda, z)|^2}{k_n(\lambda,\lambda)} \leq B \|\kappa _n (\cdot ,
z)\| \ano ^2 = B \, .
$$
The   relative separation follows as % is contained
in~\cite{DS04}, Section 2.11, Lemma 16. % proved by a pigdeon hole argument

(iii) Again we choose $\kappa _n(\cdot , z)$, this time  for $z\in
\cro $. By Lemma~\ref{neardiag} % the normalized kernel $\kappa _n(\lambda,z)$ 
% satisfies
$|\kappa _n (\lambda,z)|^2 \asymp n^2$ for $\lambda \in
B(z,\epsilon/n)$. Likewise $k_n(z,z) \asymp n^2$ for $z\in \cro $ by 
Lemma~\ref{wei1}. Thus
$$
C \#\big( \Lambda_n \cap B(z, \tfrac{\epsilon}{n})\big) \leq  \sum _{\lambda \in
  \Lambda _n \cap  B(z, \frac{\epsilon}{n}) } \frac{|\kappa
  _n(\lambda, z)|^2}{k_n(\lambda,\lambda)} \leq B \|\kappa _n (\cdot ,
z)\| \ano ^2 = B \, .
$$
\end{proof}
% \color{magenta}We have some control on the number of points of a
% Marcinkiewicz-Zygmund family in the corona $C_{\gamma/n}$. For the
% points in the interior we have the following result. 
% \begin{lemma}\label{Carleson_in_the_bulk}
%  Assume that $(\Lambda_n)$ is a Marcinkiewicz-Zygmund family for
%  $\mathcal P_n$ in $A^2(\mathbb D)$. 
% \end{lemma}
% \begin{proof}
% We  test the upper Marcinkiewicz-Zygmund bound against
% $\kappa_n\in \poln $. From 
% Lemmas~\ref{wei1} and~\ref{neardiag}  we know that $\frac{|\kappa
%   _n(\lambda, z)|^2}{k_n(\lambda,\lambda)} \geq C$ for some constant,
% whenever $z\in \br$ and $\lambda \in B(z, 0.5(1-|z|)))$. Therefore
% $$
% C    \#(\Lambda_n \cap B(z, 0.5(1-|z|))) \leq  \sum _{\lambda \in B(z,
%   0.5(1-|z|)))} \frac{|\kappa
%   _n(\lambda, z)|^2}{k_n(\lambda,\lambda)} \leq B \, , 
% $$
% where we have used the upper \mz\ inequality. 
% \end{proof}

As a consequence we see that the cardinality of a \mz\ family for
polynomials obeys the correct order of growth. 

\begin{cor}
 If $(\Lambda_n)$ satisfies the upper Marcinkiewicz-Zygmund inequality, then %there is a constant $C>0$ such that
 $\# \Lambda_n \lesssim n$.
\end{cor}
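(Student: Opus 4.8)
The plan is to split the disk at the threshold $\gamma_0$ produced by Proposition~\ref{boundarycount}(ii) and to count the points of $\Lambda_n$ in the bulk and in the boundary annulus separately. Writing $\bD = B_{1-\gamma_0/n} \cup C_{\gamma_0/n}$, I would first dispose of the annulus: applying Proposition~\ref{boundarycount}(i) with $\gamma = \gamma_0$ gives at once
\[
  \#\big(\Lambda_n \cap C_{\gamma_0/n}\big) \lesssim n .
\]
Everything then reduces to the bulk term $\#(\Lambda_n \cap B_{1-\gamma_0/n})$, for which the upper \mz\ inequality must be turned into a packing bound.

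For the bulk I would use the structural statement in Proposition~\ref{boundarycount}(ii): $\Lambda_n \cap B_{1-\gamma_0/n}$ is a union of at most $C$ subsets, each $\delta$-separated with $\delta$ independent of $n$. It is therefore enough to show that a single $\delta$-separated set $\Lambda' \subseteq B_{1-\gamma_0/n}$ satisfies $\#\Lambda' \lesssim n$, and then multiply by the fixed number $C$. The right tool is the M\"obius-invariant area $d\mu(z) = k(z,z)\,dz = (1-|z|^2)^{-2}\,dz$, with $k$ the Bergman kernel~\eqref{eq:4b}. By $\delta$-separation the Euclidean disks $B(\lambda, \delta(1-|\lambda|))$, $\lambda \in \Lambda'$, are pairwise disjoint; since $1-|z|^2 \asymp_\delta 1-|\lambda|^2$ throughout each such disk (because $1-|z|\in[(1-\delta)(1-|\lambda|),(1+\delta)(1-|\lambda|)]$), every one of them has $\mu$-measure $\asymp_\delta 1$. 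Moreover each disk lies inside the marginally larger disk $B_{1-\gamma_1/n}$ with $\gamma_1 = (1-\delta)\gamma_0$, because $|z| \leq |\lambda| + \delta(1-|\lambda|) \leq 1 - \gamma_1/n$ whenever $|\lambda| \leq 1-\gamma_0/n$. Disjointness and monotonicity of $\mu$ then yield $\#\Lambda' \lesssim_\delta \mu(B_{1-\gamma_1/n})$.

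The heart of the argument is the elementary radial computation
\[
  \mu\big(B_{1-\gamma_1/n}\big) = \int_{B_{1-\gamma_1/n}} \frac{dz}{(1-|z|^2)^2} = \pi\Big(\frac{1}{1-(1-\gamma_1/n)^2} - 1\Big) \asymp \frac{n}{\gamma_1},
\]
since the boundary layer $1-(1-\gamma_1/n)^2$ has width $\asymp 1/n$. This gives $\#\Lambda' \lesssim n$, hence $\#(\Lambda_n \cap B_{1-\gamma_0/n}) \lesssim n$, and together with the annulus bound it proves $\#\Lambda_n \lesssim n$. The step that requires genuine care---and the reason the final count is of order $n$ rather than $n^2$---is precisely the use of the invariant measure: a crude packing argument with ordinary Euclidean area would only bound $\#\Lambda'$ by $O(n^2)$, whereas weighting by the Bergman kernel $(1-|z|^2)^{-2}$ collapses the $O(n^2)$ Euclidean volume of the thin boundary region into a hyperbolic area of size $O(n)$.
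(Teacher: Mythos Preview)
Your proof is correct and follows essentially the same route as the paper: split $\bD$ into the bulk $B_{1-\gamma_0/n}$ and the annulus $C_{\gamma_0/n}$, invoke Proposition~\ref{boundarycount}(i) for the annulus, and control the bulk via the hyperbolic area estimate $\int_{B_{1-\gamma/n}} (1-|z|^2)^{-2}\,dz \asymp n$. The only cosmetic difference is that the paper counts by \emph{covering} $B_{1-\gamma_0/n}$ with hyperbolic disks $\Delta(z_j,\tfrac13)\subseteq B(z_j,0.5(1-|z_j|))$, each containing at most $C$ points of $\Lambda_n$ by Proposition~\ref{boundarycount}(ii), whereas you count by \emph{packing} the disjoint separation disks around each $\lambda$ into a region of bounded hyperbolic area---dual implementations of the same estimate.
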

\begin{proof}
 Choose $\gamma$ large  enough as in
 Lemma~\ref{boundarycount}(ii). We cover $\br = \Delta (0, \gn )$ with
 hyperbolic disks $\Delta (z_j,\frac{1}{3}) \subseteq B(z_j, 0.5
 (1-|z_j|))$, such that the disks $\Delta (z_j,\frac{1}{6})$ are
 disjoint.  Since the hyperbolic area of $\br $ is
 $\int_{B_{1-\gamma/n}} \frac{dz}{(1-|z|^2)^2} \leq n/\gamma$, we need
 at most $c n/\gamma $ disks (where $c\inv $ is the hyperbolic area of
 $\Delta (z_j,\frac{1}{6})$).  By Lemma~\ref{boundarycount}(ii) every
 hyperbolic disk $ B(z_j, 0.5
 (1-|z_j|))$ contains $C$ points, so that 
$ \# (\Lambda_n\cap B_{1-\gamma/n}) \leq C n$ for a suitable constant.  
 % This controls the number of points in $B_{% 1-\gamma/n}$. In each disk they are controlled by the hyperbolic area, thus the number of points in $B_{1-\gamma/n}$ is controlled by 
%  \[
%  \sharp (\Lambda_n\cap B_{1-\gamma/n}) \lesssim  \int_{B_{1-\gamma/n}} \frac{dz}{(1-|z|^2)^2} \lesssim n/\gamma,
%  \]
% and
By  Lemma~\ref{boundarycount}(i) %takes care of
$\#(\Lambda_n \cap
C_{\gamma/n}) \lesssim n $ as well.
\end{proof}

% The behavior on the number of points that are near the boundary is controlled by
% \begin{lemma}
 
% \end{lemma}

We can now formulate  a geometric description of the upper \mz\
inequality~\eqref{eq:k1}, that is,  the Bessel inequality of the
normalized reproducing kernels. 
\begin{tm}\label{besselgeom}
For a family $(\Lambda_n)\subseteq \bD $ the following conditions are
equivalent:

(i) For all $n\geq n_0$
\[
          \sum_{\lambda\in\Lambda_n}
          \frac{|p(\lambda)|^2}{k_n(\lambda, \lambda)} \le B
          \|p\|^2,\qquad \forall p\in \mathcal P_n \, . 
\]

(ii)  There  are  $\gamma>0$ and $C>0$ such that $(\Lambda_n)$ satisfies
\begin{align}
 \# \big(\Lambda_n \cap B(w, 0.5(1-|w|))\big) &\le C \qquad \forall w\in \br  \, , \label{eq:n7}\\
 \#(\Lambda_n \cap B(w, 1/n)) & \le C \qquad \forall w\in \cro \, . \label{eq:n8}
\end{align}

% (iii) $\sup _{w\in \bD } \# (\Lambda _n \cap \Delta ^{(n)}(w,\rho ) <
% \infty $ for some (and hence) all $\rho \in (0,1)$. \comms{where
%   $\Delta ^{(n)}(w,\rho )$ is the ball with respect to the metric
%   $k_n(z,z) dz$. }
\end{tm}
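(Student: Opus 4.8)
The plan is to prove the two implications separately: the forward direction $(i)\Rightarrow(ii)$ is essentially a repackaging of Proposition~\ref{boundarycount}, while the converse is handled by splitting the \mz\ sum into its contributions from the bulk $\br$ and the boundary annulus $\cro$, where the kernel $k_n$ behaves in completely different ways.

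For $(i)\Rightarrow(ii)$, parts (ii) and (iii) of Proposition~\ref{boundarycount} already give, under the upper \mz\ inequality, the bounds $\#(\Lambda_n\cap B(w,0.5(1-|w|)))\le C$ for $w\in B_{1-\gamma_0/n}$ and $\#(\Lambda_n\cap B(w,\epsilon/n))\le C$ for $w\in\cro$. First I would fix $\gamma=\gamma_0$ so that \eqref{eq:n7} holds on all of $\br$, and then pass from the radius $\epsilon/n$ of Proposition~\ref{boundarycount}(iii) to the radius $1/n$ of \eqref{eq:n8}: a disk $B(w,1/n)$ is covered by a number $M=M(\epsilon)$ of disks of radius $\epsilon/n$, and each of these meets $\Lambda_n$ in at most $C$ points, applying (iii) centered at a point of $\Lambda_n$ inside it when that point lies in $\cro$ and (ii) when it lies in the bulk, which is legitimate since every such center is within $1/n$ of $\cro$. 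This inflates the constant by $M$ but keeps it independent of $n$.

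For $(ii)\Rightarrow(i)$ I would estimate separately
\[
\sum_{\lambda\in\Lambda_n\cap\br}\frac{|p(\lambda)|^2}{k_n(\lambda,\lambda)}\quad\text{and}\quad\sum_{\lambda\in\Lambda_n\cap\cro}\frac{|p(\lambda)|^2}{k_n(\lambda,\lambda)}.
\]
In the bulk, condition \eqref{eq:n7} forces $\Lambda_n\cap\br$ to be a union of at most $C'$ uniformly discrete sets with a separation $\delta$ independent of $n$ (the covering argument of \cite[Sec.~2.11, Lemma 16]{DS04}, transparent via the inclusions \eqref{eq:n2}). For each such $\delta$-separated family the Euclidean balls $B(\lambda,\delta(1-|\lambda|))\subseteq\bD$ are disjoint, so the submean value property of $|p|^2$ together with the lower bound $k_n(\lambda,\lambda)\ge c_\gamma(1-|\lambda|^2)^{-2}$ from Lemma~\ref{wei1}(i) yields
\[
\frac{|p(\lambda)|^2}{k_n(\lambda,\lambda)}\le\frac{4}{\pi c_\gamma\delta^2}\int_{B(\lambda,\delta(1-|\lambda|))}|p(w)|^2\,dw .
\]
Summing over the $C'$ families and using disjointness gives $\lesssim_{\gamma,\delta}\|p\|\ano^2$, exactly the mechanism of Lemma~\ref{boundary} and of $(ii)\Rightarrow(i)$ in Proposition~\ref{geomcarleson}.

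The annulus is the delicate part, since there $k_n(\lambda,\lambda)\asymp_\gamma n^2$ by Lemma~\ref{wei1}(ii), which is vastly smaller than $k(\lambda,\lambda)=(1-|\lambda|^2)^{-2}$ for $\lambda$ very close to $\partial\bD$; dividing by $n^2$ rather than by the true kernel makes these terms expensive, so Lemma~\ref{boundary} does not suffice. My plan is to exploit that $p$ is entire, so the submean value property holds over the \emph{full} disk $B(\lambda,c/n)$ even when it pokes outside $\bD$, giving $|p(\lambda)|^2/k_n(\lambda,\lambda)\lesssim_{\gamma,c}\int_{B(\lambda,c/n)}|p|^2$. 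Condition \eqref{eq:n8} guarantees that the disks $\{B(\lambda,c/n)\}$ have bounded overlap (a point $w$ lies in at most $\#(\Lambda_n\cap B(w,c/n))\le C$ of them once $c\le\min(1,\gamma/2)$), and their union sits inside the thin band $\{1-2\gamma/n\le|w|\le 1+c/n\}$. The crucial step, which I expect to be the main obstacle, is to show that the $|p|^2$-integral over this $O(1/n)$-wide band is still controlled by $\|p\|\ano^2$ uniformly in $n$: writing $g(r)=\sum_k|a_k|^2r^{2k}$, which is increasing with $(1+c/n)^{2n}\le e^{2c}$, the band integral is $\lesssim_{\gamma,c}\tfrac1n\sum_k|a_k|^2\le\tfrac{n+1}{n}\|p\|\ano^2$, using the elementary comparison $\sum_k|a_k|^2\le(n+1)\sum_k|a_k|^2/(k+1)=(n+1)\|p\|\ano^2$ for $p=\sum_{k=0}^n a_kz^k$. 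Combining bounded overlap with this band estimate gives $\sum_{\lambda\in\Lambda_n\cap\cro}|p(\lambda)|^2/k_n(\lambda,\lambda)\lesssim\|p\|\ano^2$, and adding the two regimes completes the proof.
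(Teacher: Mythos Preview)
Your proposal is correct and follows the same route as the paper's proof: Proposition~\ref{boundarycount} for the forward direction, and for the converse the bulk/annulus split, handling each regime by the submean-value property together with bounded overlap. You are in fact more explicit in two places the paper glosses over --- the covering passage from radius $\epsilon/n$ to $1/n$ in $(i)\Rightarrow(ii)$, and in $(ii)\Rightarrow(i)$ your band estimate $\tfrac1n\sum_k|a_k|^2\le\tfrac{n+1}{n}\|p\|\ano^2$ cleanly accounts for the portion of the disks $B(\lambda,1/n)$ lying outside $\bD$, whereas the paper simply writes $\bigcup_{\lambda\in\cro}B(\lambda,1/n)\subseteq C_{(\gamma+1)/n}$ and bounds the integral by $\|p\|\ano^2$.
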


\begin{proof}
(i) $\, \Rightarrow \, $ (ii): This is
Proposition~\ref{boundarycount}.

(ii) $\, \Rightarrow \, $ (i): We write  %the integral as % that
% we want to control in two: 
$$
   \sum_{\lambda\in\Lambda_n}
          \frac{|p(\lambda)|^2}{k_n(\lambda, \lambda)} = \sum
          _{\lambda \in \Lambda _n \cap \br} \dots + \sum _{\lambda
            \in \Lambda _n \cap \cro } \dots
          $$
and control each term with the appropriate geometric condition.

On $\br $ we may replace $k_n$ by $k$ (Lemma~\ref{wei1}). Since
$\Lambda _n \cap \br $ is a union of at most $C$ uniformly discrete
sets with fixed separation independent of $n$ by our
assumption~\eqref{eq:n7}, Proposition~\ref{geomcarleson}  --- or the appropriate
version of Lemma~\ref{boundary} --- implies that
\begin{align*}
  \sum _{\lambda \in \Lambda _n \cap \br }
  \frac{|p(\lambda)|^2}{k_n(\lambda, \lambda)} & \lesssim _\gamma   \sum _{\lambda \in \Lambda _n \cap \br }
                                                 \frac{|p(\lambda)|^2}{k(\lambda, \lambda)} \\
  &\lesssim \|p\|\ano ^2 \, .
\end{align*}

% ,   and using the submean-value
% property with small $\delta $ we obtain
% \begin{align*}
%   \sum _{\lambda \in \Lambda _n \cap \br }
%   \frac{|p(\lambda)|^2}{k_n(\lambda, \lambda)} & \lesssim _\gamma   \sum _{\lambda \in \Lambda _n \cap \br }
%                                                  \frac{|p(\lambda)|^2}{k(\lambda, \lambda)} \\
%   &\leq \frac{4}{\pi \delta ^2}   \sum _{\lambda \in \Lambda _n \cap
%     \br } \int _{B(\lambda , \delta (1-|\lambda|))} |p(z)|^2 \, dz \\
%   &= \frac{4}{\pi \delta ^2}   \int \Big( \sum _{\lambda \in \Lambda _n \cap
%     \br } \chi_{B(\lambda , \delta (1-|\lambda|))}(z) \Big)  |p(z)|^2
%     \, dz \, . 
% \end{align*}
% First note that $\bigcup _{\lambda \in \Lambda _n \cap
%     \br } B(\lambda , \delta (1-|\lambda|) $ is contained in a
%   slightly larger disk $B_{1-(\gamma -\delta )/n}$. If $\delta $ is
%   small enough (precisely $\delta =1/4$), then $z\in B(\lambda ,
%   \delta (1-|\lambda|)) $ implies that $\lambda \in B(z, \delta
%   (1-|\lambda|)) \subseteq B(z, 0.5 (1-|z|)) $. By our assumption
  % \eqref{eq:n7} the sum in the integral is therefore bounded by $C$, and we continue
  % the estimate by
  % $$
  % \sum _{\lambda \in \Lambda _n \cap \br }
  % \frac{|p(\lambda)|^2}{k_n(\lambda, \lambda)} \leq  \frac{4C}{\pi
  %   \delta ^2} \int _{B_{1-(\gamma - \delta )/n}} |p(z)|^2 \, dz \lesssim
  %  \|p\|\ano ^2 \, .
  % $$
  
On $\cro $ we have $k_n(\lambda ,\lambda) |B(\lambda, 1/n)| \asymp 1
$ by~\eqref{eq:6}, therefore using the submean-value property we
obtain
\begin{align*}
\sum _{\lambda \in \cro } \frac{|p(\lambda)|^2}{k_n(\lambda,\lambda)}
  &\leq \sum   _{\lambda \in \cro } \frac{1}{k_n(\lambda,\lambda)
    |B(\lambda,1/n)|} \int _{B(\lambda , 1/n)} |p(z)|^2 \, dz \\
  & \lesssim \int   \Big( \sum _{\lambda \in \Lambda
    _n \cap \cro } \chi _{B(\lambda,1/n)} (z ) \Big) |p(z)|^2 \, dz
    \, .
\end{align*}
The integral is  over the slightly
larger annulus $C_{(\gamma +1)/n}\supseteq \bigcup _{\lambda \in \cro
} B(\lambda,1/n)$,  and by assumption \eqref{eq:n8} the
sum in the integral is bounded, whence  $\sum _{\lambda \in \cro } \frac{|p(\lambda)|^2}{k_n(\lambda,\lambda)}
 \lesssim  \|p\|\ano ^2$. 
% \comms{ I will adapt proof next time}  
%  To control the first integral we use the submean-value property for disks of the form $B(w, 0.5(1-|w|)$
%  and we have
%  \[
%   \int_{\br } |p(w)|^2 \, d\mu_n (w) \lesssim \int_{\br }
%   \frac1{(1-|w|)^2} \int_{B(w, 0.5(1-|w|))}|p(z)|^2 \, dz \, d\mu_n(w)
%   \lesssim \|p\|^2. 
%  \]
%  To control the second integral:
%   \[
%   \int_{\cro} |p(w)|^2 \, d\mu _n(w) \lesssim \int_{\cro} n^2 \int_{B(w, 1/n)}|p(z)|^2 \, dz \, d\mu_n(w) \lesssim \|p\|^2.
%  \]
% \comms{ ***} 
\end{proof}

In   a more general formulation   we can use measures in condition (i) 
instead of discrete samples. This motivates
the following definition. 
%This follows from the more general statement of Theorem~\ref{carleson}. 
\begin{df}
We say that a sequence of measures $\{\mu_n\}$ defined on the unit
disk form a Carleson sequence for polynomials in the Bergman space if
there  is a constant $C>0$ such that 
\begin{equation}
  \label{eq:carl}
 \int_{\mathbb D} |p(z)|^2\, d\mu_n(z) \le C  \int_{\mathbb D} |p(z)|^2\, dz
 \qquad \text{ for all } p\in \poln
\end{equation}
uniformly in $n$. 
\end{df}
Thus a sequence  $(\Lambda_n)\subseteq \bD $ satisfies the upper
inequality in \eqref{eq:k1} if and only if the measures $\mu_n =
\sum_{\lambda \in \Lambda_n} \frac{\delta_\lambda}{k_n(\lambda,
  \lambda)}$ form a Carleson sequence.

\begin{tm}\label{carleson}
 A sequence of measures $\mu_n$ is a Carleson sequence for polynomials
 in the Bergman space if and only if there exist $\gamma> 0$ and 
 $C>0$ such that 
 \begin{align*}
 \mu_n(B(z,0.5(1-|z|^2))) &\le C (1-|z|^2)^2 \qquad \forall z\in
   B_{1-\gamma/n}, \\  
   \mu_n(B(z, 1/n)) &\le C/{n^2}  \qquad \qquad \quad \forall z\in C_{\gamma/n}.
 \end{align*}
\end{tm}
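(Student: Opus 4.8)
The plan is to follow the template of Theorem~\ref{besselgeom}, reading the two displayed conditions as the ``measure versions'' of the point counts \eqref{eq:n7}--\eqref{eq:n8}. Indeed, on $\br$ the weight $k_n(\lambda,\lambda)\inv\asymp(1-|\lambda|^2)^2$ (Lemma~\ref{wei1}(i)) converts a point count into $\mu_n(B(z,0.5(1-|z|^2)))/(1-|z|^2)^2$, while on $\cro$ the weight $k_n(\lambda,\lambda)\inv\asymp n^{-2}$ (Lemma~\ref{wei1}(ii)) converts it into $n^2\mu_n(B(z,1/n))$; specializing $\mu_n=\sum_{\lambda\in\Lambda_n}\delta_\lambda/k_n(\lambda,\lambda)$ then recovers Theorem~\ref{besselgeom} as a corollary. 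For the necessity of the two estimates I would test \eqref{eq:carl} against the normalized reproducing kernel $p=\kappa_n(\cdot,z)$, exactly as in Proposition~\ref{boundarycount}. Since $\int_{\bD}|\kappa_n(w,z)|^2\,dw=\pi$, the right-hand side of \eqref{eq:carl} equals $C\pi$. For $z\in\br$, Lemma~\ref{neardiag}(i) gives $|\kappa_n(w,z)|\geq(4(1-|z|^2))\inv$ on $B(z,0.5(1-|z|^2))$, so restricting the left-hand integral to that disk yields $\mu_n(B(z,0.5(1-|z|^2)))\lesssim(1-|z|^2)^2$; for $z\in\cro$, Lemma~\ref{neardiag}(ii) gives $|\kappa_n(w,z)|\geq n/K$ on $B(z,\epsilon/n)$, whence $\mu_n(B(z,\epsilon/n))\lesssim n^{-2}$, and covering $B(z,1/n)$ by a bounded number of $\epsilon/n$-disks upgrades this to the stated radius $1/n$.

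For sufficiency I would split $\int_{\bD}|p|^2\,d\mu_n=\int_{\br}|p|^2\,d\mu_n+\int_{\cro}|p|^2\,d\mu_n$ and bound each piece by the sub-mean-value property of $|p|^2$, in the spirit of the implication (ii)$\Rightarrow$(i) of Theorem~\ref{besselgeom}. On $\cro$, for each $w$ the sub-mean-value inequality gives $|p(w)|^2\leq(n^2/\pi)\int_{B(w,1/n)}|p(z)|^2\,dz$; integrating against $d\mu_n(w)$ and applying Fubini turns this into $(n^2/\pi)\int_{\bD}|p(z)|^2\,\mu_n(B(z,1/n)\cap\cro)\,dz$, and since the annulus hypothesis bounds $\mu_n(B(z,1/n))\lesssim n^{-2}$ for every $z$ within distance $1/n$ of $\cro$, this is $\lesssim\int_{\bD}|p|^2\,dz$. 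On $\br$ I would use the variable-scale inequality $|p(w)|^2\leq(\pi r_w^2)\inv\int_{B(w,r_w)}|p|^2\,dz$ with $r_w=0.5(1-|w|^2)$; after Fubini the inner factor becomes $\int_{\{w:\,|z-w|<r_w\}}r_w^{-2}\,d\mu_n(w)$, which I control using $1-|w|^2\asymp1-|z|^2$ on that set together with the bulk hypothesis $\mu_n(B(z,c(1-|z|^2)))\lesssim(1-|z|^2)^2$.

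The main difficulty is the bookkeeping between the two scales. In the bulk the radius $r_w$ varies with $w$, so the Fubini step needs the standard comparability estimate (if $|z-w|<0.5(1-|w|^2)$ then $1-|w|^2\asymp1-|z|^2$) in order to replace the $w$-dependent disk by a fixed disk $B(z,c(1-|z|^2))$, followed by a covering argument to pass from radius $c(1-|z|^2)$ to the hypothesis radius $0.5(1-|z|^2)$. The genuinely delicate region is the transition shell $\{1-(\gamma+1)/n\leq|z|<\gn\}$ of width $1/n$ just inside $\br$: there $1-|z|^2\asymp1/n$, the disk $B(z,1/n)$ straddles both regimes, and one must apply whichever of the two hypotheses fits each covering disk, checking that both return the common bound $\lesssim n^{-2}\asymp(1-|z|^2)^2$. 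Once these comparabilities are established the bulk and annulus estimates add up to \eqref{eq:carl} with a constant depending only on $C$ and $\gamma$, completing both implications.
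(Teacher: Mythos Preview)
Your proposal is correct and takes exactly the approach the paper indicates: the paper omits the proof of Theorem~\ref{carleson}, stating only that it ``is similar to the proof of Theorem~\ref{besselgeom}'', and your argument is precisely that --- testing against $\kappa_n(\cdot,z)$ via Lemma~\ref{neardiag} for necessity, and the submean-value/Fubini argument split along $\br\cup\cro$ for sufficiency. Your identification of the transition shell near $|z|=1-\gamma/n$ as the one place requiring care (both hypotheses meeting at the common scale $(1-|z|^2)^2\asymp n^{-2}$) is apt and is the only detail beyond a direct transcription of the discrete proof.
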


The proof is similar to the proof of Theorem~\ref{besselgeom}. As
we will not need the general statement about Carleson sequences, we
omit the details.

\vspace{3mm}
% \begin{rem*}
\noindent \textbf{Remark.}
 % Observe that the precise value of $\gamma$ in Theorem~\ref{carleson} is irrelevant. If it is true for one $\gamma$ it is true for any other $\gamma$ (with a possible different $C$).
   The geometric condition in Theorems~\ref{besselgeom} and
  \ref{carleson} can be more concisely stated  in terms of the first
  Bergman metric $\rho_n = k_n(z,z) ds^2$, see \cite[p.32]{Bergman}
  instead of the Euclidean metric. This is not to be confused with the
  more usual second Bergman metric given by $\Delta \log k_n(z,z)
  ds^2$. The reformulation is as follows: if $D_n(z,r)$ denotes a disk
  in the first Bergman metric $\rho_n$, then a sequence of measures
  $\{\mu_n\}$ is a Carleson sequence if and only if there are  $r> 0$
  and $C>0$ such that  
 $\mu_n(D_n(z,r)) \le C |D_n(z,r)|$, for all $n>0$ and for all $z\in
 \bD$, where $|D(z,r)|$ is the Lebesgue measure of $D(z,r)$. The
 conditions of 
 (ii) in Theorem~\ref{besselgeom} are then equivalent to  $\# (\Lambda _n \cap
 D_n(z,r)) \leq C$ for all $z\in \bD $.

 %Again,
 % the precise value of $r$ is irrelevant. 
 This reformulation  is of course similar to the characterization of
 the Carleson measures for the Bergman space obtained in
 \cite{Hastings}. The geometric description is $\mu(B(z,0.5(1-|z|)))
 \lesssim |B(z, 0.5(1-|z|))|$ for all $z\in \bD$.  

\subsection{Sampling implies \mz\ inequalities}
After these preparations we can show that every sampling set for
$A^2(\bD )$ generates  a \mz\ family for the polynomials. 

\begin{tm} \label{samptomz}
  Assume that $\Lambda \subseteq \bD $ is a sampling set for $A^2(\bD
  )$. Then for $\gamma >0$ small enough, the sets $\Lambda _n =
  \Lambda \cap \br $  form a \mz\ family for $\poln $ in
  $A^2(\bD )$. 
\end{tm}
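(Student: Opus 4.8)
The plan is to prove the two \mz\ inequalities separately, using the kernel comparison of Lemma~\ref{wei1} on the bulk $\br$ and discarding the contribution of the boundary annulus $\cro$ by means of Corollary~\ref{negl}. Throughout $p\in\poln\subseteq A^2(\bD)$, so the sampling inequalities \eqref{origdef} for $\Lambda$ apply to $p$. The upper \mz\ bound is immediate: restricting the sampling sum to $\Lambda_n=\Lambda\cap\br$ and invoking the lower kernel estimate $k_n(\lambda,\lambda)\ge c_\gamma k(\lambda,\lambda)$ of Lemma~\ref{wei1}(i), valid for $n\ge n_\gamma$, I obtain
\[
\sum_{\lambda\in\Lambda_n}\frac{|p(\lambda)|^2}{k_n(\lambda,\lambda)}\le\frac{1}{c_\gamma}\sum_{\lambda\in\Lambda}\frac{|p(\lambda)|^2}{k(\lambda,\lambda)}\le\frac{B}{c_\gamma}\|p\|_{A^2}^2,
\]
where the last step is the upper sampling inequality and the constant $B/c_\gamma$ depends only on $\gamma$ and $B$, not on $n$.

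For the lower bound I would start from the lower sampling inequality $A\|p\|_{A^2}^2\le\sum_{\lambda\in\Lambda}|p(\lambda)|^2/k(\lambda,\lambda)$ and split the sum along the partition $\bD=\br\cup\cro$. On the bulk the upper kernel estimate $k_n\le k$ gives $\sum_{\lambda\in\Lambda\cap\br}|p(\lambda)|^2/k(\lambda,\lambda)\le\sum_{\lambda\in\Lambda_n}|p(\lambda)|^2/k_n(\lambda,\lambda)$, which is precisely the \mz\ sum I wish to bound from below. It therefore suffices to absorb the boundary contribution, that is, to show that $\sum_{\lambda\in\Lambda\cap\cro}|p(\lambda)|^2/k(\lambda,\lambda)\le\tfrac{A}{2}\|p\|_{A^2}^2$. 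Since $\Lambda$ is sampling it satisfies the upper Carleson inequality, so by Proposition~\ref{geomcarleson} it is a union of $K$ uniformly discrete sets of some common separation $\delta$; applying Lemma~\ref{boundary} to each piece yields
\[
\sum_{\lambda\in\Lambda\cap\cro}\frac{|p(\lambda)|^2}{k(\lambda,\lambda)}\le\frac{4K}{\pi\delta^2}\int_{C_{\gamma'/n}}|p(w)|^2\,dw=\frac{4K}{\delta^2}\cdot\frac{1}{\pi}\int_{C_{\gamma'/n}}|p(w)|^2\,dw,
\]
with $\gamma'=(1+\delta)\gamma$. By Corollary~\ref{negl} the normalized integral on the right is at most $\epsilon'\|p\|_{A^2}^2$, with $\epsilon'\to0$ as $\gamma'\to0$ and uniformly in $n$; choosing $\gamma$ (hence $\gamma'$) small enough that $\tfrac{4K}{\delta^2}\epsilon'\le\tfrac{A}{2}$ achieves the absorption. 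Combining the two parts gives $\tfrac{A}{2}\|p\|_{A^2}^2\le\sum_{\lambda\in\Lambda_n}|p(\lambda)|^2/k_n(\lambda,\lambda)$.

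The main obstacle is the boundary term, and the crux is its uniformity in $n$. The separation $\delta$ and the number $K$ of uniformly discrete pieces of $\Lambda$ are fixed by the sampling set alone, whereas the smallness $\epsilon'$ of the polynomial mass in the shrinking annulus $C_{\gamma'/n}$ is controlled uniformly in the degree by Corollary~\ref{negl}. This is exactly what decouples the choice of $\gamma$ from $n$ and turns ``$\gamma$ small enough'' into a statement about $\Lambda$ only. The resulting \mz\ constants are $A'=A/2$ and $B'=B/c_\gamma$, both independent of $n$, valid for all $n\ge n_0$ with $n_0=\max(n_\gamma,\lceil2\gamma'\rceil)$.
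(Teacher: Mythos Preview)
Your proof is correct and follows essentially the same route as the paper's: split the sampling sum along $\bD=\br\cup\cro$, use $k_n\le k$ on the bulk for the lower bound and $k_n\ge c_\gamma k$ for the upper bound, and absorb the boundary contribution via Lemma~\ref{boundary} combined with Corollary~\ref{negl}. The only cosmetic difference is that you justify the applicability of Lemma~\ref{boundary} by invoking Proposition~\ref{geomcarleson} (so that $\Lambda$ is a finite union of $K$ uniformly discrete sets and the constant is $4K/(\pi\delta^2)$), whereas the paper instead cites Lemma~\ref{relsep} to pass to a single uniformly discrete sampling subset $\Lambda'\subseteq\Lambda$; both devices serve the same purpose and lead to the same estimate.
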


\begin{proof}
  The assumption means that there exist $A,B>0$, such that
  \begin{equation}
    \label{eq:12}
    A \|f\|_{A^2}^2 \leq  \sum _{\lambda \in \Lambda }
    \frac{|f(\lambda )|^2}{k(\lambda , \lambda )}   \leq B \|f\|_{A^2}^2 \quad \text{ for all
    } f \in A^2 \, .
  \end{equation}

 Since $\Lambda $ is sampling for $A^2$, by Lemma~\ref{relsep} it
 contains a uniformly discrete set  $\Lambda '$ with separation constant $\delta $, say. We
 now choose $\gamma >0$ so small that
 \begin{equation}
   \label{eq:26}
 1- \Big( 1-\frac{(1+\delta )\gamma}{n} \Big)^{2n+2} \leq
 \frac{A \delta ^2}{8 } \, \qquad \text{ for all } n > 2\gamma \,  .  
 \end{equation}
 This is possible   because 
 $\Big( 1-\frac{(1+\delta )\gamma}{n} \Big)^{2n+2} \geq e^{-4(1+\delta
   )\gamma } ( 1-\frac{(1+\delta )\gamma}{n} )^{2}$  by the lower
 estimate in \eqref{eq:10},  and thus  tends to $1$ uniformly in $n$, as
 $\gamma \to 0$.    Note that the choice of $\gamma $ depends  on the
 lower sampling constant $A$ and  the separation
  of $\Lambda ' $.
 
We now apply Lemma~\ref{boundary} and Corollary~\ref{negl} to the points $\Lambda \setminus \Lambda _n
= \Lambda \cap  \cro $. Set $\gamma ' = (1+\delta )\gamma $.  Then for $p \in \poln $ we obtain
\begin{align*}
  \sum _{\lambda \in \Lambda \setminus \Lambda _n} \frac{|p(\lambda
  )|^2}{k(\lambda , \lambda )}  & \leq \frac{4}{\pi \delta ^2} \int _{C_{\gamma '/n}}
                                  |p(w)|^2 \, dw \\
  &\leq \frac{4}{\delta ^2} \Big(  1- \Big( 1-\frac{(1+\delta )\gamma}{n}
    \Big)^{2n+2} \Big) \|p\|_{A^2}^2 \leq \frac{A}{2} \|p\| \ano  ^2
    \, .
\end{align*}
Consequently,
\begin{align*}
    \sum _{\lambda \in \Lambda _n} \frac{|p(\lambda
  )|^2}{k(\lambda , \lambda )}  &= \sum _{\lambda \in \Lambda } - \sum
                                  _{\lambda \in \Lambda \setminus
                                  \Lambda _n} \dots \\
  &\geq A \|p\| \ano ^2 -   \sum _{\lambda \in \Lambda \setminus \Lambda _n} \frac{|p(\lambda
  )|^2}{k(\lambda , \lambda )}  \geq (A - A/2) \|p\| \ano ^2 \, .
\end{align*}
Since always $k_n(\lambda ,\lambda ) \leq k(\lambda , \lambda )$ we
finish the estimate by
$$
    \sum _{\lambda \in \Lambda _n} \frac{|p(\lambda
  )|^2}{k_n(\lambda , \lambda )} \geq     \sum _{\lambda \in \Lambda _n} \frac{|p(\lambda
  )|^2}{k(\lambda , \lambda )} \geq \frac{A}{2} \|p\| \ano ^2 \qquad
\text{ for all } p \in \poln \, .
$$
As usual, the upper bound is easy. Since by \eqref{eq:5} $k_n(\lambda , \lambda )
\asymp k(\lambda , \lambda )$ for $\lambda \in \br $, it
follows that
\begin{equation*}
        \sum _{\lambda \in \Lambda _n} \frac{|p(\lambda
  )|^2}{k_n(\lambda , \lambda )} \lesssim _\gamma     \sum _{\lambda \in \Lambda _n} \frac{|p(\lambda
                                   )|^2}{k(\lambda , \lambda )} 
  \leq     \sum _{\lambda \in \Lambda } \frac{|p(\lambda
  )|^2}{k(\lambda , \lambda )} \leq  B \, \|p\|
    \ano ^2 \, ,
\end{equation*}
% \begin{align*}
%       \sum _{\lambda \in \Lambda _n} \frac{|p(\lambda
%   )|^2}{k_n(\lambda , \lambda )} &\lesssim     \sum _{\lambda \in \Lambda _n} \frac{|p(\lambda
%                                    )|^2}{k(\lambda , \lambda )} \\
%   &\leq     \sum _{\lambda \in \Lambda } \frac{|p(\lambda
%   )|^2}{k(\lambda , \lambda )} \leq  B \, \|p\|
%     \ano ^2 \, ,
% \end{align*}
since $\Lambda $ is sampling for $A^2(\bD )$. 
\end{proof}

Since sampling sets for $A^2(\bD )$ are completely characterized by
means of the Seip-Korenblum density~\cite{seip93}, Theorem~\ref{samptomz} provides a
wealth of examples of \mz\ families. However, %  the proof is
% essentially a perturbation argument, and 
the parameter $\gamma $ in
the statement depends on the lower sampling constant $A$ and the
separation $\delta $ of $\Lambda $. 

% \comms{ To obtain a concrete \mz\ family, we need an  example
%   of a sampling set with known lower sampling constant $A$ and known
%   separation. Do we know such an example?}

\subsection{From \mz\ inequalities to sampling} \label{sec:mzsamp}
% Let $\tilde{B}_R = \{ z\in \bD : d(z,0) <R\}$ is the ball in $\bD $
% with respect to the hyperbolic metric $d$ and let $\Lambda ^{\epsilon
% }$ be an $\epsilon $-neighborhood of $\Lambda $.  Then
Choose either  the Euclidean or the pseudohyperbolic metric on $\bD $
and let $d(E,F)$ be
the corresponding Hausdorff distance between two closed sets $E,F\subseteq \bD $. We
say that  a sequence of sets $\Lambda _n\subseteq \bD $ converges weakly to  
$\Lambda \subseteq \bD  $, if  for all compact disks  $B\subseteq \bD $
$$
\lim _{n\to \infty } d\big((\Lambda _n \cap B) \cup \partial B, (\Lambda  \cap B) \cup
\partial B \big) = 0 \, . 
$$
See~\cite{DS04,GOR15,HKZ00} for equivalent definitions. The main consequence of weak
convergence  is the convergence of sampling sums. If all $\Lambda _n$
are uniformly separated with fixed separation $\delta $,  then 
$$
\sum _{\lambda \in \Lambda _n \cap B}
\frac{|f(\lambda)|^2}{k(\lambda,\lambda )} \to \sum _{\lambda \in \Lambda  \cap B}
\frac{|f(\lambda)|^2}{k(\lambda,\lambda )} \, .
$$
for all $f\in A^2(\bD )$. More generally, if each $\Lambda _n$ is a
finite union of $K$ uniformly separated sets with separation constant
$\delta $ independent of $n$, then we need to include multiplicities
$m (\lambda ) \in \{1, \dots , K\}$. We then  obtain 
\begin{equation}
  \label{eq:fin1}
\sum _{\lambda \in \Lambda _n \cap B}
\frac{|f(\lambda)|^2}{k(\lambda,\lambda )} \to \sum _{\lambda \in \Lambda  \cap B}
\frac{|f(\lambda)|^2}{k(\lambda,\lambda )} m(\lambda )\, .
\end{equation}
Since the multiplicities $m (\lambda )$  are bounded, they only affect the
constants, but do not change the arguments.

%\color{magenta}
\begin{tm} \label{mzsampa}
  Assume that
  $(\Lambda _n)$ is a \mz\ family for the polynomials $\poln $ in
  $A^2(\bD )$. Let $\Lambda $ be a weak limit of $(\Lambda
  _n)$ or some subsequence $(\Lambda _{n_k})$.
  Then $\Lambda $ is a sampling set for $A^2(\bD )$. 
\end{tm}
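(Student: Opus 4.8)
The plan is to establish the two sampling inequalities for $\Lambda$ separately, with the upper (Carleson) bound routine and the lower bound carrying all the weight. Throughout I fix $\gamma\ge\gamma_0$ (with $\gamma_0\approx 3$ as in Proposition~\ref{boundarycount}), so that the bulk set $\Lambda_n\cap\br$ is a union of at most $C$ uniformly discrete sets with a separation $\delta$ independent of $n$. Since $\delta$-separation passes to Hausdorff (weak) limits, the limit set $\Lambda$ is again a finite union of uniformly discrete sets, so by Proposition~\ref{geomcarleson} it satisfies the upper inequality $\sum_{\lambda\in\Lambda}\frac{|f(\lambda)|^2}{k(\lambda,\lambda)}\le B'\|f\|\ano^2$. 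The same structure furnishes a Carleson bound $\sum_{\lambda\in\Lambda_n\cap\br}\frac{|g(\lambda)|^2}{k(\lambda,\lambda)}\le B'\|g\|\ano^2$ that is uniform in $n$, which I will use repeatedly. If $\Lambda$ is only a weak limit along a subsequence $(\Lambda_{n_k})$, all limits below are taken along that subsequence.

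For the lower bound fix $f=\sum_{k\ge 0}a_kz^k\in A^2$ and let $p_n=\sum_{k=0}^n a_kz^k\in\poln$ be its Taylor truncation, so that $\|p_n\|\ano\to\|f\|\ano$. The decisive observation is that, unlike the extremal monomial $z^n$, the mass of $p_n$ does \emph{not} concentrate near $\partial\bD$: since the tail $\sum_{k>K}\frac{|a_k|^2}{k+1}$ is small for large $K$ and $(\gn)^{2k+2}\to1$ for each fixed $k$, a standard truncation argument gives $\frac1\pi\int_{\br}|p_n(z)|^2\,dz\to\|f\|\ano^2$, hence $\frac1\pi\int_{C_{(\gamma+1)/n}}|p_n|^2\to0$. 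Feeding this into the submean-value estimate used in the proof of Theorem~\ref{besselgeom} (legitimate because of the bounded overlap from Proposition~\ref{boundarycount}(iii)) shows that the boundary part of the sampling sum is negligible, $\sum_{\lambda\in\Lambda_n\cap\cro}\frac{|p_n(\lambda)|^2}{k_n(\lambda,\lambda)}\to0$. Applying the lower \mz\ inequality to $p_n$ and subtracting this boundary part yields $\liminf_n\sum_{\lambda\in\Lambda_n\cap\br}\frac{|p_n(\lambda)|^2}{k_n(\lambda,\lambda)}\ge A\|f\|\ano^2$. Using $k_n\ge c_\gamma k$ on $\br$ (Lemma~\ref{wei1}(i)) to switch to the weight $k$, and then replacing $p_n$ by $f$ through the uniform Carleson bound applied to $p_n-f$ (whose norm tends to $0$), I obtain
\[
\liminf_{n\to\infty}\ \sum_{\lambda\in\Lambda_n\cap\br}\frac{|f(\lambda)|^2}{k(\lambda,\lambda)}\ \ge\ c_\gamma A\,\|f\|\ano^2\,.
\]

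It remains to transfer this bound from the degree-dependent bulk $\br$ to the fixed weak limit $\Lambda$. For a fixed compact disk $B_{1-\eta}$ I split the bulk sum into the part over $B_{1-\eta}$ and the part over the shell $\{1-\eta\le|z|<\gn\}$; a shell version of Lemma~\ref{boundary} (the same disjoint-disk and submean-value proof, using the uniform separation $\delta$) bounds the shell part by $C\,\frac1\pi\int_{\{|z|\ge 1-(1+\delta)\eta\}}|f|^2=:CE(\eta)$, uniformly in $n$, with $E(\eta)\to0$ as $\eta\to0$. On the fixed disk $B_{1-\eta}$, weak convergence together with the convergence of sampling sums \eqref{eq:fin1} gives $\sum_{\lambda\in\Lambda_n\cap B_{1-\eta}}\frac{|f(\lambda)|^2}{k(\lambda,\lambda)}\to\sum_{\lambda\in\Lambda\cap B_{1-\eta}}\frac{|f(\lambda)|^2}{k(\lambda,\lambda)}\,m(\lambda)\le K\sum_{\lambda\in\Lambda}\frac{|f(\lambda)|^2}{k(\lambda,\lambda)}$, where the multiplicities satisfy $m(\lambda)\le K$. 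Combining the three estimates yields $c_\gamma A\|f\|\ano^2\le K\sum_{\lambda\in\Lambda}\frac{|f(\lambda)|^2}{k(\lambda,\lambda)}+CE(\eta)$, and letting $\eta\to0$ produces the lower sampling inequality with constant $A'=c_\gamma A/K$.

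The main obstacle is the interplay between the growing bulk $\br$ and the fixed weak limit $\Lambda$: one must rule out that the boundary points of $\Lambda_n$, which escape to $\partial\bD$, carry off the sampling mass. This is exactly what the concentration estimate $\frac1\pi\int_{\cro}|p_n|^2\to0$ for a \emph{fixed} $f$ provides. The subsequent passage to the limit is delicate only insofar as it requires the separation and Carleson bounds to be uniform in $n$, so that the shell error $E(\eta)$ can be made small independently of $n$ before letting $n\to\infty$.
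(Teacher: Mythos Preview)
Your proof is correct and follows the same architecture as the paper's: split the sampling sum into the bulk $\br$ and the annulus $\cro$, use the uniform separation from Proposition~\ref{boundarycount}(ii) together with a shell version of Lemma~\ref{boundary} to peel off an outer layer whose contribution is small, and pass to the weak limit on a fixed compact disk via~\eqref{eq:fin1}. The one genuine difference is in how you feed the \mz\ inequality: the paper fixes a single polynomial $p\in\cP_N$, applies the \mz\ inequality for all $n\ge N$, and controls the annulus term by the crude bound $III_n\le \|p\|_\infty^2\, n^{-2}\,\#(\Lambda_n\cap\cro)\lesssim \|p\|_\infty^2/n\to 0$ (Proposition~\ref{boundarycount}(i)), then invokes density of polynomials at the very end. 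You instead work with a general $f\in A^2$ from the outset, test against its Taylor truncations $p_n$, and kill the annulus term by the concentration estimate $\tfrac{1}{\pi}\int_{C_{(\gamma+1)/n}}|p_n|^2\to 0$ combined with the submean-value/overlap bound from Theorem~\ref{besselgeom}; this buys you a cleaner argument with no separate density step, at the price of the extra (easy) dominated-convergence computation for the truncations and the Cauchy--Schwarz passage from $p_n$ to $f$ via the uniform Carleson bound. Both routes are equivalent in strength and yield the same constants up to harmless factors.
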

\color{black}
\begin{proof}
\textbf{Step 1.} We assume that $\Lambda_n$ is a Marcinkiewicz Zygmund family  and
therefore there are  $A,B> 0$ such that 
$A\|p\| \ano\leq \sum _{\lambda \in \Lambda _n} \frac{|p(\lambda
    )|^2}{k_n(\lambda,\lambda)}\leq B\|p\|^2\ano $ for all polynomials
  $p\in \cP _n$.
The upper inequality implies that   there are $C>0$
and $\gamma> 0$ such that $\#(\Lambda_n \cap B(w, 0.5(1-|w|))) \le C$
for all $w\in \br $ and all $n$ (Theorem~\ref{besselgeom}). 
In addition, each   set $\Lambda_n \cap \br $ is  a finite
  union of $K$ uniformly separated sequences with separation $\delta $
  independent of $n$.

  Since $\Lambda _n$ converges to $\Lambda $ weakly,  $\Lambda$ satisfies the same inequality 
  \[
   \#(\Lambda \cap \bar{B}(w, 0.5(1-|w|))) \le C,\qquad \forall w\in \br \, ,
  \]
and thus  $\Lambda$ is also  a union of $K$ uniformly discrete
  sequences with separation $\delta $, see, e.g.,~\cite{DS04}, Sec.~2.1, Lemma 16.

    Note that the geometric conditions furnish constants $c_\gamma $ for
  the comparison of the kernels $k$ and $k_n$ on $\br$, the
  separation constant $\delta $, and the  multiplicity $K$.  In the
  next step we will choose a suitable radius $r$ that solely depends
  on these given constants. 

   \textbf{Step 2.} 
   We prove the desired sampling inequality in $A^2(\bD )$ for all polynomials and
  then use a density argument. 

Fix a polynomial $p$ of degree $N$, say.  We know that there is an $r = r(N)<1$ such that
$\int_{|z|> r-\delta } |p|^2 \le \frac{c_\gamma \delta ^2 A }{8K}
\|p\|^2\ano$. We take $n\geq N$ big enough such that $r< 1 - \gamma /n$. In this case we have
\begin{align*}
A \|p\|^2 \ano \le  \sum _{\lambda \in \Lambda _n} \frac{|p(\lambda
    )|^2}{k_n(\lambda,\lambda)} &= 
    \sum _{\lambda \in \Lambda _n, |\lambda| <r} \frac{|p(\lambda
                 )|^2}{k_n(\lambda,\lambda)} \\
  &+ \sum _{\lambda \in \Lambda _n, r\leq
      |\lambda| <\gn } \frac{|p(\lambda
    )|^2}{k_n(\lambda,\lambda)} + \sum _{\lambda \in \Lambda _n,
      |\lambda | \geq \gn  } \frac{|p(\lambda
    )|^2}{k_n(\lambda,\lambda)} = \\
   & = I_n + II_n + III_n.
\end{align*}
According to Lemma~\ref{wei1}  we may replace $k_n(\lambda, \lambda)$
by $k(\lambda, \lambda)$ in the first two terms % $I$ and $II$,
% with % $k_n(\lambda,\lambda)\inv \leq 2 k(\lambda,\lambda)\inv $
and  by $n^2$ in the third term. %we replace $k_n(\lambda, \lambda)$.
Thus with  a constant depending on $\gamma $, we obtain 
\begin{align} \label{eq:fin2}
A \|p\|^2 \ano \leq c_\gamma \inv \Big( \sum _{\lambda \in \Lambda _n \cap B_r} \frac{|p(\lambda
    )|^2}{k(\lambda,\lambda)} +  \sum _{\lambda \in \Lambda _n, r\leq |\lambda| <\gn } \frac{|p(\lambda
    )|^2}{k(\lambda,\lambda)} +  \sum _{\lambda \in
  \Lambda _n \cap C_{\gamma/n} } \frac{|p(\lambda
    )|^2}{n^2} \Big) \, .
\end{align}
In the sum  $I_n$  all points $\lambda$ lie in the compact set
$\overline{B(0,r)}$, and  by weak convergence  including
multiplicities $m(\lambda ) \in \{1, \dots, K\}$,  we obtain % are converging to $\Lambda$ then
\begin{align*}
\lim _{n\to \infty } I_n &\leq c_\gamma \inv  \lim_{n\to\infty}\sum_{\lambda \in \Lambda _n \cap B_r} \frac{|p(\lambda
    )|^2}{k(\lambda,\lambda)} = \\
    & = c_\gamma \inv 
     \sum_{\lambda \in \Lambda \cap \overline{B_r}} \frac{|p(\lambda
    )|^2}{k(\lambda,\lambda)} m(\lambda) \leq c_\gamma \inv K
  \sum_{\lambda \in \Lambda \cap \overline{B_r}} \frac{|p(\lambda 
    )|^2}{k(\lambda,\lambda)} \, .
\end{align*}
To treat $II_n$, we  use the assertion of Step~1 that every  $\Lambda_n \cap
\br $ is  a finite union of at most $K$ uniformly separated
sequences with separation $\delta $. Lemma~\ref{boundary} and the
choice of $r$  yield
\[
II_n \leq c_\gamma \inv  \sum _{\lambda \in \Lambda _n, r\leq |\lambda| <1-\gamma/n } \frac{|p(\lambda
    )|^2}{k(\lambda,\lambda)} \le \frac{4K}{\pi \delta ^2c_\gamma } \int_{|z|>
    r-\delta } |p(z)|^2 \le  \frac{A}{2} \|p\|\ano.
\]
Finally, the last term  $III_n$ is negligible when $n\to\infty$ because 
\begin{align*}
III_n =   \sum _{\lambda \in \Lambda_n \cap C_{\gamma/n}}  \frac{|p(\lambda)|^2}{n^2} \le \|p\|_\infty ^2  \, \frac{1}{n^2} 
\# \{ \lambda \in \Lambda _n \cap C_{\gamma/n} \} \, . %\to 0 \text{ as } n\to\infty. 
\end{align*}
By Lemma~\ref{boundarycount}(i) $III_n$ tends to $0$, as $n\to \infty
$.

Finally we take the limit in \eqref{eq:fin2} and  % combine  the estimates for
% the three terms and 
obtain  
inequality
  \begin{equation}
    \label{eq:16}
 A \|p\| \ano ^2 \leq c_\gamma \inv K   \sum _{\lambda \in \Lambda
   \cap \overline{B(0,r)} } \frac{|p(\lambda
    )|^2}{k(\lambda,\lambda )}  +  \frac{A}{2} \|p\|^2 \ano  \, .
\end{equation}
 This implies  the lower sampling inequality valid for all polynomials. 

Since the polynomials are dense in $A^2(\bD
)$, \eqref{eq:16} can be extended to all of $A^2(\bD )$.~\footnote{Note
that $\sum _{\lambda \in \Lambda } \frac{|p(\lambda
    )|^2}{k(\lambda,\lambda )} = \sum _{\lambda \in \Lambda } |\langle
  p,  \kappa_\lambda \rangle |^2$, where $\kappa_\lambda (z) = k(z,\lambda)
  k(\lambda,\lambda )^{-1/2}$ is the normalized reproducing
  kernel. Thus the left-hand side  in \eqref{eq:16} is just
  the frame operator associated to the set $\{\kappa_\lambda \}$. For
  boundedness and invertibility it therefore suffices to check on a
  dense subset.}

\textbf{Step 3.} As always, the upper bound is much easier to prove: Let $p\in \cP
_N$. Then by Lemma~\ref{boundary}
$$
\sum _{\lambda \in \Lambda \cap C_{\gamma /N}} \frac{|p(\lambda
  )|^2}{k(\lambda,\lambda)} \leq C \int _{C_{\gamma'/N}} |p(w)|^2 \,
dw \leq C \|p\|^2 \ano \, .
$$
On the disk $B_{1-\gamma /N}$ we use the weak convergence and deduce
that %  have $k_n(z,z) \asymp k(z,z)$ by
% Lemma~\ref{wei1}, \eqref{eq:5}, therefore
$$
\sum _{\lambda \in \Lambda \cap B_{1-\gamma /N}} \frac{|p(\lambda
  )|^2}{k(\lambda,\lambda)} = \lim _{n\to \infty } \sum _{\lambda
  \in \Lambda _n \cap \bar{B}_{1-\gamma /N}} \frac{|p(\lambda
  )|^2}{k_n(\lambda,\lambda)} \leq B \|p\|^2 \ano \, ,
$$
because $\Lambda _n$ is a \mz\ family for polynomials and $p\in \cP _N
\subseteq \poln $. 
The sums  of both terms  yields the upper sampling inequality
for $\Lambda $ for all polynomials, which extends to $A^2(\bD )$ by
density.  
\end{proof}

The upper bound  can also be derived  from the geometric
  description  of Theorem~\ref{besselgeom}. 

  \vspace{3mm}
  
%   \noindent \textbf{Remarks.} We conclude this section with a remark
%   on variations.

% 1.
With a bit more effort one can prove an $A^p$-version of
  Theorems~\ref{samptomz} and \ref{mzsampa}. The proof requires
  several modifications of interest. For instance, to obtain \eqref{eq:9}
  for the $A^p$-norm, one needs an argument similar to ~\cite[Lemma
  7]{OS07}. The $A^p$-norm of the normalized reproducing kernel in
  Proposition~\ref{boundarycount} requires the boundedness of the Bergman
  projection on $L^p$. 

  % 2. One can also investigate  the validity of \mz\ inequalities of the form
  % $\|p\|\ano ^2 \asymp \sum _{\lambda \in \Lambda _n} |p(\lambda )|^2
  % / k(\lambda,\lambda)$  with the
  % \emph{extrinsic} weight $k(\lambda,\lambda)$ of the Bergman kernel
  % of the whole space $A^2$.  Theorem~\ref{samptomz} carries over
  % easily to  the notion of extrinsic \mz\ families, a corresponding
  % version of   Theorem~\ref{mzsampa} is false, as one can always  The arguments go through with some
  % modifications, and even become  easier. 

\section{Hardy space}

\subsection{Basic facts.}
The Hardy space $H^2= H^2(\bD)$ consists of all analytic functions in
$\bD $ whose boundary values on $\partial \bD = \bT $ are in $L^2(\bT )$ with  finite norm
\begin{equation}
  \label{eq:1a}
  \|f\|_{H^2} =  \Big(\int _{0 }^1 |f(e^{2\pi i t})|^2 \, dt \Big)^{1/2} \, .
\end{equation}
 The monomials $z\to z^k$ form an  orthonormal basis,  and the norm of $f(z) =
\sum _{k=0}^\infty a_k z^k$ is 
\begin{equation}
  \label{eq:2a}
  \|f\|_{H^2} ^2 = \sum _{k=0}^\infty |a_k|^2  \, .
\end{equation}
For  $p(z) = \sum _{k=0}^n a_k z^k \in \poln $ and $0< \rho \leq 1$, let
$p_\rho(z) = p(\rho z)$,  then 
\begin{align*}
\|p_\rho \|^2\hano &= \sum _{k=0}^n |a_k|^2 \rho ^{2k}        
\geq  \rho ^{2n} \|p\|^2 \hano  \, , 
\end{align*}
and clearly $\|p_\rho \| \hano \leq \|p\| \hano $. 
If $p\in \poln $,  $\rho = \gn $, and $n>2\gamma $,  then
by   \eqref{eq:10}
\begin{equation}
  \label{eq:23}
\|p\|^2 \hano \geq \|p_\rho \|^2 \hano \geq (\gn )^{2n} \|p\|^2 \hano
\geq e^{-4\gamma } \|p\|^2\hano \, .
\end{equation}
%which holds for $n\geq 2\gamma $  with a bound independent of the degree $n$. 

The reproducing kernel of $\cP _n$ in $H^2$ is given by
\begin{align}  
  k_n(z,w) & = \sum _{k=0}^n (z\bar{w})^n =   \frac{1 - (z\bar{w})^{n+1}}{1-z\bar{w}} \, .  \label{eq:3a}
\end{align}
As $n\to \infty $, the kernel tends to the reproducing kernel of
$H^2$,
\begin{equation}
  \label{eq:4}
k(z,w) =   \frac{1}{1-z\bar{w}}
\end{equation}
for $z,w \in \bD $. % As the weight in the \mz\ inequalities for $\poln$
% is $k_n(z,z)\inv $, 
For \mz\ families for polynomials in  Hardy space it will be necessary to also consider sampling points
\emph{outside} the unit disk as in ~\cite{OS07}. With this caveat in mind, we define the
appropriate annuli as
\begin{equation}
  \label{eq:17}
  A_{\gamma/n} = \{ z\in \bC : \gn \leq |z| \leq (\gn )\inv \} \, .
\end{equation}
We now  compare the kernels for $\poln $ and $H^2$. 
\begin{lemma}
  \label{xei1}
  Let $k_n(z,w)$ be the reproducing kernel of $\poln$ in $H^2$ and
  let $\gamma >0$ be arbitrary. 

  % (i) If $|z| \leq \gn$, then
  % \begin{equation}
  %   \label{eq:5}
  %   c_\gamma k(z,z) \leq k_n(z,z) \leq k(z,z) = \frac{1}{(1-|z|^2)^2}
  %   \, . 
  % \end{equation}

   If $z\in \ar $, i.e., $\gn \leq |z| \leq (\gn )\inv$,  and $n>2\gamma $, then
  \begin{equation}
    \label{eq:6a}
    \frac{1-e^{-4\gamma } }{2\gamma} n \leq k_n(z,z) \leq
    \frac{e^{4\gamma }}{\gamma} n  \, .
  \end{equation}
  Consequently, if $\Lambda _n \subseteq \ar $, then,   for all polynomials $p\in \poln $,
 % there exist
  % constants $c_\gamma , d_{\gamma }>0$, such that
  \begin{equation}
    \label{eq:18}
       \frac{1}{n} \sum _{\lambda \in \Lambda_n} |p(\lambda )|^2
  \asymp \sum _{\lambda \in \Lambda_n} \frac{|p(\lambda
    )|^2}{k_n(\lambda,\lambda)}    \, .
  \end{equation}
  % \begin{equation}
  %   \label{eq:18}
  %     c_\gamma \frac{1}{n} \sum _{\lambda \in \Lambda_n} |p(\lambda )|^2
  % \leq \sum _{\lambda \in \Lambda_n} \frac{|p(\lambda
  %   )|^2}{k_n(\lambda,\lambda)} \leq d_\gamma \frac{1}{n} \sum
  % _{\lambda \in \Lambda_n} |p(\lambda )|^2   
  % \end{equation}
\end{lemma}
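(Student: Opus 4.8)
The plan is to reduce the whole statement to a single one-variable geometric sum. On the diagonal the kernel \eqref{eq:3a} depends only on the radius: with $r = |z|$ one has $k_n(z,z) = \sum_{k=0}^n r^{2k} = \frac{1 - r^{2n+2}}{1 - r^2}$, which is a strictly increasing function of $r \ge 0$. Hence over the closed annulus $\ar$ its minimum is attained at the inner radius $r = \gn$ and its maximum at the outer radius $r = (\gn)\inv$, and the two-sided bound \eqref{eq:6a} will follow once I evaluate the closed form at these two endpoints and control the results with the elementary inequalities \eqref{eq:10}.

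For the lower bound I evaluate at $r = \gn$, obtaining $k_n(z,z) \ge \frac{1 - (\gn)^{2n+2}}{1 - (\gn)^2}$. In the numerator \eqref{eq:10} gives $(\gn)^{2n+2} \le (\gn)^{2n} \le e^{-2\gamma}$, so $1 - (\gn)^{2n+2} \ge 1 - e^{-2\gamma}$; in the denominator $1 - (\gn)^2 = \tfrac{2\gamma}{n} - \tfrac{\gamma^2}{n^2} \le \tfrac{2\gamma}{n}$. Multiplying, the lower bound is a $\gamma$-dependent constant times $n$. For the upper bound I evaluate at $r = (\gn)\inv$ and clear denominators to write $k_n(z,z) = \frac{(\gn)^{-2n} - (\gn)^2}{1 - (\gn)^2}$; here $(\gn)^{-2n} = \big((\gn)^n\big)^{-2} \le e^{4\gamma}$ by \eqref{eq:10}, while for $n > 2\gamma$ one has $1 - (\gn)^2 = \tfrac{\gamma}{n}\big(2 - \tfrac{\gamma}{n}\big) \ge \tfrac{\gamma}{n}$, which produces the upper bound $\tfrac{e^{4\gamma}}{\gamma}\, n$.

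The norm equivalence \eqref{eq:18} is then immediate. Once $k_n(z,z) \asymp n$ is known on $\ar$ with constants depending only on $\gamma$, every $\lambda \in \Lambda_n \subseteq \ar$ satisfies $\tfrac{1}{k_n(\lambda,\lambda)} \asymp \tfrac1n$; multiplying the nonnegative numbers $|p(\lambda)|^2$ by these comparable weights and summing over $\lambda \in \Lambda_n$ gives \eqref{eq:18} at once, with the constants of $\asymp$ read off from \eqref{eq:6a}.

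I do not expect a genuine analytic obstacle here: the monotonicity of the geometric sum collapses the problem to its two endpoints, and everything else is the careful bookkeeping of constants through \eqref{eq:10}. The one point that deserves attention is tracking, for each factor $\gn$, whether it is being estimated from above or from below, and verifying the denominator bound $1 - (\gn)^2 \ge \gamma/n$, which is exactly where the hypothesis $n > 2\gamma$ enters. (A direct evaluation at $r=\gn$ in fact yields the numerator factor $1-e^{-2\gamma}$ rather than $1-e^{-4\gamma}$, but this affects only the explicit constant, not the conclusion $k_n(z,z)\asymp n$.)
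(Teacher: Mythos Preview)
Your proof is correct and follows essentially the same route as the paper: both use the monotonicity of $r\mapsto \frac{1-r^{2n+2}}{1-r^2}$ to reduce to the two endpoints of $\ar$, then estimate numerator and denominator separately via \eqref{eq:10} and the expansion $1-(\gn)^2=\tfrac{\gamma}{n}(2-\tfrac{\gamma}{n})\in[\gamma/n,\,2\gamma/n]$. Your observation about the numerator constant is also accurate: the argument in the paper, like yours, actually yields the sharper factor $1-e^{-2\gamma}$ rather than the $1-e^{-4\gamma}$ appearing in the statement, so the stated constant is slightly looser than what the proof delivers.
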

On the annulus $\ar $ we may therefore always work with the weight
$1/n$ for polynomials of degree $n$. 
\begin{proof}
Writing $r=|z|$, the kernel  $k_n(z,z) = \frac{1-r^{2n+2}}{1-r^2}$
is increasing in $r$, so that for $z\in \ar $ we have
$$
\frac{1-(\gn )^{2n+2}}{1- (\gn )^2} \leq k_n(z,z) \leq \frac{(\gn
  )^{-2n-2} - 1}{(\gn )^{-2} -1} = \frac{(\gn
  )^{-2n} - (\gn )^2 }{1  - (\gn )^{2} } \, .
$$
For $n\geq 2\gamma $, the denominator $1  - (\gn )^{2} = \frac{\gamma
}{n} + \frac{\gamma }{n}(1- \frac{\gamma }{n})$ is  between $\gamma /n$ and $2\gamma /n$, for the
numerator we use \eqref{eq:10} and obtain $(\gn
  )^{-2n} - (\gn )^2  \leq e^{4\gamma }$ and  $1-(\gn )^{2n+2} \geq
  1-(\gn )^{2n} \geq 1-e^{-2\gamma }$. Thus both ratios are of order
  $n$ with the  constants $(1-e^{-2\gamma }) /(2\gamma)$ and
 $ e^{4\gamma}/\gamma $. % $c_\gamma = (1-e^{-2\gamma }) /(2\gamma)$ and
  % $d_\gamma = e^{4\gamma}/\gamma $. The inequality \eqref{eq:18}
  % follows directly from \eqref{eq:6a}.
 \end{proof}

\subsection{From \mz\ families on \texorpdfstring{$\bT $}{T} to \mz\ families for
  \texorpdfstring{$H^2(\bD)$}{H2(D)}. }

In contrast to the situation for the Bergmann space, there are no
sampling sequences for the Hardy space $H^2(\bD )$ by the results of
Thomas~\cite{thomas98}. Duren and Schuster~\cite[p.~154]{DS04} give
the following simple argument:  a sampling set $\Lambda $ for $H^2$
must be a Blaschke sequence. However, every Blaschke sequence
is a zero set in $H^2$, which contradicts the sampling inequality.   Therefore there can be no analogue of
Theorem~\ref{samptomz}.

Nevertheless, we prove that Hardy space admits \mz\
families for polynomials. The idea is to associate a \mz\ for
polynomials in $H^2(\bD )$ to every  \mz\ family  for polynomials on
the torus. As these are well understood~\cite{Gro99,OS07}, 
we obtain a general class of \mz\ families in $H^2(\bD )$.

 We use the following notation: for $\lambda \in \bC
\setminus \{0\}$ let $\tilde{\lambda}  = \frac{\lambda}{|\lambda|}$ be
the projection from the complex plane $\bC\setminus \{0\}$ onto the
torus $\partial \bD = \bT$.

\begin{tm}
  \label{mztorushardy}
  Assume that the family $(\Lambda _n) \subseteq \bC $ has the
  following properties:

  (i) there exists $\gamma >0$, such that $\Lambda _n \subseteq \ar $
  for all $n \geq \gamma $.

  (ii) The projected family $(\tilde{\Lambda}_n) \subseteq \bT $ is a
  \mz\ family for the polynomials $\poln \subseteq L^2(\bT )$.

    (iii) The projection $\Lambda _n \to \widetilde{\Lambda_n}$ from
  $\ar $ to $\bT $ is one-to-one for all $n$.
  
  % (iii') For any $n>0$ no two points $\lambda, \lambda' \in \Lambda_n$ have the same projection $\tilde\lambda\in \bT$. 

  Then $(\Lambda _n)$ is a \mz\ family for the polynomials $\poln $ in
  $H^2(\bD )$. 
\end{tm}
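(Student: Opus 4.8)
The plan is to recognise that, once restricted to $\poln$, the Hardy space is nothing but $L^2(\bT)$ in disguise, and then to transport the torus hypothesis radially into the annulus $\ar$. Indeed, for polynomials $\|p\|\hano=\|p\|_{L^2(\bT)}$ by \eqref{eq:2a}, and the kernel $k_n$ in \eqref{eq:3a} is literally the reproducing kernel of $\poln$ \emph{inside} $L^2(\bT)$; moreover $\langle p,k_n(\cdot,w)\rangle=\sum_m a_m w^m=p(w)$ for \emph{every} $w\in\bC$, not only for $w\in\bD$, which is exactly what makes off-circle nodes meaningful. By Lemma~\ref{xei1} (specifically \eqref{eq:18}) the weight $k_n(\lambda,\lambda)$ may be replaced by $n$ throughout $\ar$, so, writing $\lambda_{n,k}=\rho_{n,k}e^{i\nu_{n,k}}$ and $\tilde\lambda_{n,k}=e^{i\nu_{n,k}}$, the assertion to be proved reduces to
\[
 \tfrac1n\sum_{k}\bigl|p(\lambda_{n,k})\bigr|^2 \;\asymp_\gamma\; \|p\|_{L^2(\bT)}^2 \qquad (p\in\poln).
\]
This says precisely that the lifted family $(\Lambda_n)$ is an \mz\ family for $\poln$ in $L^2(\bT)$, now with nodes pushed off the circle into $\ar$ --- the setting of~\cite{OS07}.

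The upper bound is the routine half and I would do it by hand. For each node I compare $p(\lambda_{n,k})$ with its projection via $p(\lambda_{n,k})-p(\tilde\lambda_{n,k})=\int_1^{\rho_{n,k}}e^{i\nu_{n,k}}p'(te^{i\nu_{n,k}})\,dt$; Cauchy--Schwarz together with $|\rho_{n,k}-1|\lesssim\gamma/n$ gives $|p(\lambda_{n,k})-p(\tilde\lambda_{n,k})|^2\le\tfrac{\gamma}{n}\int_{\gn}^{(\gn)^{-1}}|p'(te^{i\nu_{n,k}})|^2\,dt$. Summing against the weight $1/n$, exchanging sum and integral, and applying the \emph{upper} torus inequality to the degree-$(n-1)$ polynomial $z\mapsto p'(tz)$ for each fixed $t$, the total is $\le\tfrac{\gamma B}{n}\int_{\gn}^{(\gn)^{-1}}\sum_m m^2|a_m|^2 t^{2m-2}\,dt$. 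Since $\int_{\gn}^{(\gn)^{-1}}t^{2m-2}\,dt\lesssim_\gamma 1/m$ by \eqref{eq:10}, this is $\lesssim_\gamma\tfrac1n\sum_m m|a_m|^2\le\|p\|_{L^2(\bT)}^2$. Combined with $\tfrac1n\sum_k|p(\tilde\lambda_{n,k})|^2\le B\|p\|^2$ (the upper torus bound applied to $p$ itself) and $(a+b)^2\le2a^2+2b^2$, this yields the upper \mz\ bound for $(\Lambda_n)$.

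The genuine content, and the step I expect to be the main obstacle, is the \emph{lower} bound, where a naive node-by-node perturbation does \emph{not} survive large $\gamma$. Already for $p(z)=z^n$ one has $|p(\lambda_{n,k})-p(\tilde\lambda_{n,k})|=|\rho_{n,k}^{\,n}-1|$, which by \eqref{eq:10} is comparable to $|p(\tilde\lambda_{n,k})|=1$ once $\gamma$ is large, so the additive estimate $|p(\lambda)|^2\ge\tfrac12|p(\tilde\lambda)|^2-|p(\lambda)-p(\tilde\lambda)|^2$ is useless --- even though the \emph{sum} $\tfrac1n\sum_k\rho_{n,k}^{2n}\ge e^{-4\gamma}\tfrac1n\sum_k 1\gtrsim_\gamma\|z^n\|_{L^2(\bT)}^2$ is perfectly fine. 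The point is that each individual value is distorted by a multiplicative factor $e^{\pm O(\gamma)}$ (because the dilation $p\mapsto p(\rho\,\cdot)$ changes $\|p\|_{L^2(\bT)}$ only by such a factor, cf.\ \eqref{eq:23}), while a single evaluation $p(\lambda_{n,k})$ carries \emph{no} pointwise lower bound in terms of $p(\tilde\lambda_{n,k})$: the two radial evaluations of a degree-$n$ polynomial are genuinely independent on a segment of ``length'' $\gamma$ in the natural $1/n$ scale. Hence the comparison is valid only at the level of the whole quadratic form.

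Consequently I would obtain the lower bound not by perturbing node by node but by appealing to the density description of \mz\ families for polynomials with nodes near the circle from~\cite{OS07}: the lower \mz\ inequality is governed by a Beurling-type lower density of the nodes measured in the intrinsic ($1/n$-)metric, in which the whole annulus $\ar$ has bounded width, so that the radial coordinate of a node is invisible to this density. Hypothesis~(iii) guarantees that the lift $\tilde\lambda_{n,k}\mapsto\lambda_{n,k}$ is one-to-one, so that no nodes are lost or merged and the counting is unchanged; hypothesis~(ii) says the projected family $(\widetilde{\Lambda_n})$ already has the density forcing the lower \mz\ bound on $\bT$; and hypothesis~(i) places every node in $\ar$. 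Therefore $(\Lambda_n)$ has the same governing density as $(\widetilde{\Lambda_n})$ and inherits the lower \mz\ inequality, which completes the proof. (For small $\gamma$ one can instead close the lower bound self-contained, e.g.\ by a Gr\"onwall/continuity argument along the radial homotopy $\rho_{n,k}^{\,s}$, $s\in[0,1]$; but the constants this produces degrade like $e^{O(\gamma)}$ and break down for large $\gamma$, which is exactly why the density characterisation is the right tool.)
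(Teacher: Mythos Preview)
Your upper bound is fine and in fact more explicit than the paper's, which simply invokes the Carleson-type counting condition from \cite[Thm.~9]{OS07} for $(\widetilde{\Lambda_n})$ and transfers it to $\ar$ via the submean-value property.

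The lower bound, however, has a genuine gap. The density description in \cite{OS07} is \emph{not} a complete characterisation: it gives a necessary condition (lower density $\geq 1$) and a sufficient one (lower density $> 1$), with the critical case undecided by density alone --- the paper itself says ``characterized \emph{almost} completely by their density''. So the chain ``$(\widetilde{\Lambda_n})$ is \mz\ on $\bT$ $\Rightarrow$ density $\geq 1$ $\Rightarrow$ $(\Lambda_n)$ is \mz\ in $\ar$'' breaks at the second arrow whenever the density is exactly critical, and there \emph{are} \mz\ families on $\bT$ at the critical density. Moreover \cite{OS07} treats nodes \emph{on} $\bT$; the assertion that its lower-bound theorem already covers nodes in an annulus of width $\gamma/n$ is not in that paper and is precisely the content of the theorem you are trying to prove. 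Your own diagnosis --- that individual evaluations $p(\lambda)$ and $p(\tilde\lambda)$ are genuinely independent at this scale --- applies equally to any attempt to lift a density statement off the circle.

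The paper bypasses density with a self-contained Duffin--Schaeffer type iteration (Lemma~\ref{teclemma}). Given $p\in\poln$, replace it by its minimum-phase associate $\widetilde{p_1}$: reflect every zero outside $\overline{\bD}$ inside via a Blaschke factor, so that $|\widetilde{p_1}|=|p|$ on $\bT$ while $|\widetilde{p_1}(z)|\le\min\bigl(|p(z)|,\,|p(1/\bar z)|\bigr)$ on $\bD$. Dilate by $(1+\gamma/(3n))^{-1}$ to obtain $p_1$ with $\|p_1\|\hano^2\ge e^{-2\gamma/3}\|p\|\hano^2$, and simultaneously push each $\lambda\in\Lambda_n$ radially toward $\bT$ (to $(1+\gamma/(3n))\lambda$ or $(1+\gamma/(3n))/\bar\lambda$) into the strictly thinner annulus $A_{3\gamma/(4n)}$; the Blaschke inequality gives $|p_1(\mu)|\le|p(\lambda)|$ node by node, and hypothesis~(iii) prevents $\lambda$ and $1/\bar\lambda$ from colliding. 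Iterating, the annulus width shrinks geometrically, the norms stay bounded below by $e^{-8\gamma/3}\|p\|\hano^2$, and a subsequential limit $p_\infty$ is sampled on $\widetilde{\Lambda_n}\subset\bT$, where the torus hypothesis applies directly. This yields the lower bound for \emph{every} \mz\ family $(\widetilde{\Lambda_n})$ on $\bT$ and every $\gamma>0$, with no density assumption at all.
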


The statement in the introduction is just a reformulation of Theorem~\ref{mztorushardy} without
additional  notation. 

The proof is inspired by a sampling theorem of Duffin and Schaeffer
for bandlimited functions from samples in the complex plane (rather
than from samples on the real axis). See~\cite{DS52} and
\cite{seip04,young80}. In analogy to the theory of bandlimited
functions, one can view Theorem~\ref{mztorushardy} also as a
perturbation result 
for \mz\ families in $L^2(\bT )$, where the points in $\bT $ are
perturbed in a complex neighborhood of $\bT$. 

We wrap the  main part of the proof into a technical
lemma.

\begin{lemma}
  \label{teclemma}
Let $\gamma >0$ and $n> 2\gamma $.   Assume that $\Lambda _n $ is a
finite set contained in $\ar $ so that the projection
$\Lambda _n \to \widetilde{\Lambda_n}$ is one-to-one. % has the same cardinality $\# \Lambda _n = \#
% \widetilde{\Lambda_n}$. 
Then 
  there exists a set $\Lambda _n ^{(1)}$ with the
  following properties:

  (i) $\Lambda _n^{(1)}$ is   contained in the smaller annulus
  $A_{\frac{3\gamma}{4n}}$ and $\# \Lambda _n^{(1)} = \# \Lambda _n$.

 (ii)  For every $\lambda \in \Lambda _n$ there is a $\mu \in \Lambda
  _n ^{(1)}$, such that $\tilde{\lambda} = \tilde{ \mu }$, and

  (iii) For every $p\in \poln $ there exists $p_1\in \poln $ satisfying
  $$\|p_1\|^2 \hano \geq e^{-2\gamma/3} \|p\| \hano ^2 $$ and
  \begin{equation}
    \label{eq:19}
 \frac{1}{n} \, \sum _{\lambda \in \Lambda
      _n} |p(\lambda )|^2 \geq   \frac{1}{n} \, \sum _{\lambda \in \Lambda
      _n^{(1)}}|p_1(\lambda )|^2 \, . 
  \end{equation}
\end{lemma}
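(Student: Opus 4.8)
The plan is to move the sampling radii by a \emph{dilation} of the polynomial, which shifts the radius of evaluation while keeping exact control of the $H^2$ norm, and — for the points lying \emph{outside} the unit circle — to combine this with a reflection across $\bT$ via the reversed polynomial (the Duffin--Schaeffer device mentioned right after the statement). For $\beta\in(0,1)$ write $p_\beta(z)=p(\beta z)$; since $\|p_\beta\|\hano^2=\sum_{k=0}^n|a_k|^2\beta^{2k}\ge \beta^{2n}\|p\|\hano^2$, the dilation costs at most the factor $\beta^{2n}$ in norm, and choosing $\beta=(\gn)^{1/6}$ gives $\|p_\beta\|\hano^2\ge (\gn)^{n/3}\|p\|\hano^2\ge e^{-2\gamma/3}\|p\|\hano^2$ by \eqref{eq:10}. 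To each $\lambda\in\Lambda_n$ with $|\lambda|\le 1$ I would attach $\mu=\lambda/\beta$, so that $p_\beta(\mu)=p(\lambda)$ \emph{exactly}; then the two sampling sums agree termwise and \eqref{eq:19} holds with equality for $p_1=p_\beta$. The nontrivial point is that $\mu$ must land in $A_{\frac{3\gamma}{4n}}$: the image radii run through $[\gn/\beta,\,1/\beta]$, and this interval sits inside $A_{\frac{3\gamma}{4n}}$ precisely when $\beta$ is chosen near the upper end of the admissible range, using the elementary inequalities $(\gn)^{2/3}\ge 1-\tfrac{3\gamma}{4n}$ and $(\gn)^{1/3}\ge 1-\tfrac{3\gamma}{4n}$ valid for $n>2\gamma$ (this is where the exponent $3/4$ is tuned against the dilation exponent).

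For a point $\lambda$ with $|\lambda|\ge 1$ the map $\mu=\lambda/\beta$ pushes it still further out, past the outer rim, so here I would reflect. Let $p^\sharp(z)=z^n\overline{p(1/\bar z)}$ be the reversed polynomial, which satisfies $\|p^\sharp\|\hano=\|p\|\hano$, has degree $\le n$, and obeys the pointwise identity $|p(\lambda)|=|\lambda|^n\,|p^\sharp(1/\bar\lambda)|\ge |p^\sharp(1/\bar\lambda)|$ for $|\lambda|\ge 1$. Placing $\mu=1/(\beta\bar\lambda)$ (the reflected point, dilated) gives $(p^\sharp)_\beta(\mu)=p^\sharp(1/\bar\lambda)$, hence $|(p^\sharp)_\beta(\mu)|\le |p(\lambda)|$, i.e.\ the inequality in \eqref{eq:19} now in the favorable direction; moreover $|\mu|=1/(\beta|\lambda|)$ again ranges over $[\gn/\beta,\,1/\beta]\subseteq A_{\frac{3\gamma}{4n}}$, so the same $\beta$ and the same geometric check serve both families.

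I expect two points to be the real work. First, the \textbf{constant bookkeeping}: a \emph{single} $\beta$ must simultaneously push the inner rim $\gn$ out to at least $1-\tfrac{3\gamma}{4n}$ \emph{and} satisfy $\beta^{2n}\ge e^{-2\gamma/3}$, and these two demands leave only a narrow window for $\beta$ (essentially $\beta\approx 1-\tfrac{\gamma}{4n}$), so that verifying $\beta^{2n}\ge e^{-2\gamma/3}$ there requires estimating $(1-\gamma/n)^n$ slightly more sharply than the crude form of \eqref{eq:10}; this is exactly what fixes the constants $\tfrac34$ and $\tfrac23$. Second, and more delicate, is the \textbf{reconciliation of the two prescriptions}: points inside the disk force $p_1=p_\beta$, while reflected outside points force $p_1=(p^\sharp)_\beta$, and these are different polynomials. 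I would resolve this by reducing to the one‑sided situation — which is in fact the only case needed for Theorem~\ref{mztorushardy}, where $\rho_{n,k}\in[\gn,1)$ so $\Lambda_n$ lies entirely inside the disk and the clean matching step with $p_1=p_\beta$ suffices — and, for the general two‑sided annulus, by splitting $\Lambda_n$ at $\bT$ and composing the inner dilation step with the outer reflection step, absorbing the two norm factors (each $\ge e^{-2\gamma/3}$ up to relabeling $\gamma$) into the geometric decay that makes the outer iteration in the main proof converge.
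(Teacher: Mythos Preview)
Your approach has a genuine gap that you yourself flag but do not close: when $\Lambda_n$ straddles the unit circle, the inside points dictate $p_1=p_\beta$ while the outside points dictate $p_1=(p^\sharp)_\beta$, and these are different polynomials. The lemma demands a \emph{single} $p_1\in\poln$ for which \eqref{eq:19} holds over all of $\Lambda_n^{(1)}$; your ``split at $\bT$ and compose'' gesture does not produce one, since $p_\beta$ gives no control at the reflected outside points and $(p^\sharp)_\beta$ none at the inside points. Nor does the one-sided reduction survive the iteration in Theorem~\ref{mztorushardy}: even if $\Lambda_n\subset\bD$, your map $\lambda\mapsto\lambda/\beta$ sends every $\lambda$ with $|\lambda|>\beta$ \emph{outside} the circle, so $\Lambda_n^{(1)}$ is genuinely two-sided and the second step of the iteration faces exactly the difficulty you postponed.

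The device the paper uses, and which you are missing, is the \emph{minimum-phase} polynomial: given $p(z)=z^\ell\prod_j(z-z_j)$, reflect only the zeros lying outside $\bD$ via Blaschke factors,
\[
\widetilde{p_1}(z)=p(z)\prod_{|z_j|>1}\frac{1-\bar z_j z}{z-z_j}\, .
\]
Then $|\widetilde{p_1}|=|p|$ on $\bT$, so $\|\widetilde{p_1}\|\hano=\|p\|\hano$ exactly, and --- this is the key point --- one has the \emph{simultaneous} bound $|\widetilde{p_1}(z)|\le\min\bigl(|p(z)|,\,|p(1/\bar z)|\bigr)$ for $z\in\bD$. The first branch handles inside $\lambda$, the second outside $\lambda$, all with the same polynomial. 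A single dilation $p_1(z)=\widetilde{p_1}\bigl((1+\tfrac{\gamma}{3n})^{-1}z\bigr)$, paired with $\mu=(1+\tfrac{\gamma}{3n})\lambda$ or $\mu=(1+\tfrac{\gamma}{3n})/\bar\lambda$, then gives $|p_1(\mu)|\le|p(\lambda)|$ in every case and costs only the factor $e^{-2\gamma/3}$ in norm. Your reversed polynomial $p^\sharp$ has $|p^\sharp|=|p|$ on $\bT$ but not the two-sided pointwise inequality; reflecting the \emph{zeros} rather than the whole polynomial is the missing idea.
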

As we will apply the lemma to a \mz\ family, it is important the
constants and the construction are independent of the degree $n$.
\begin{proof}
  \textbf{Step 1.} Construction of $\Lambda _n^{(1)}$. If $\lambda \in
  \Lambda _n$ and $|\lambda |\leq 1$, set $\mu = (1+\frac{\gamma}{3n})
  \lambda $. If $\lambda \in
  \Lambda _n$ and $|\lambda |> 1$, set $\mu = (1+\frac{\gamma}{3n})/ \bar{\lambda} $.  By construction, $\tilde{\mu} =
  \tilde{\lambda}$. Since $\Lambda _n \to \widetilde{\Lambda _n}$ is
  one-to-one  by   assumption,   $\Lambda _n^{(1)}$ has the same
  cardinality as $\Lambda _n$. % be the resulting set of
  % points.  Since $\tilde{\Lambda} _n$ is separated, it has the same 
  % cardinality as the original set $\Lambda _n$.
  [To appreciate this
  assumption, consider the case when both $\lambda $ and
  $1/\bar{\lambda}$ are in $\Lambda _n$. They both would be mapped to
  the same point $\mu $.] Since for   $|z |\geq \gn $, we have  $
  (1+\frac{\gamma}{3n}) |z | \geq  (1+\frac{\gamma}{3n}) (\gn
  ) \geq 1- \frac{3\gamma}{4n}$, and for  $ |z|<1$ we have
  $(1+\frac{\gamma}{3n}) \leq (1-\frac{3\gamma}{4n})\inv $, it
  follows that  $\Lambda
  _n^{(1)}$ is contained in the smaller annulus $A_{3\gamma/(4n)}$. 

  \textbf{Step 2.} Construction of $p_1$. Given $p\in \poln $ with
  zeros $z_j$ and factorization $p(z) = z^\ell \prod  (z-z_j)$, we 
  obtain $p_1$ %via  the so-called minimum phase filter associated to $p$
  by reflecting all its zeros into $\bD $ and an   appropriate
  scaling. We multiply $p$ by several Blaschke factors and set
 \begin{equation}
   \label{eq:20}
   \widetilde{p_1}(z) = z^l \prod _{|z_j|\leq 1} (z-z_j) \prod _{|z_j|> 1}
   (1 - \overline{z_j}z) = p(z) \,  \prod _{|z_j|> 1}
   \frac{1 - \overline{z_j}z}{z-z_j} \, .  
 \end{equation}
% As $\widetilde{p_1}$ has now the zeros $z_j$, when $|z_j|\leq 1$, and
% $1/\overline{z_j}$, when $|z_j|>1$,
By construction, all zeros of $\widetilde{p_1}$ are now in
the unit disk $\overline{\bD}$. In engineering terminology,
$\widetilde{p_1}$ is the minimum phase filter associated to $p$.  Furthermore,  $\widetilde{p_1}$ has the
  following properties:

  (i) By \eqref{eq:20} and the property of Blaschke factors we have 
$$
|\widetilde{p_1}(z)| = |p(z)| \qquad \text{ for } z\in \bT \, , 
$$
and thus $\|\widetilde{p_1}\| \hano = \|p\|\hano $. 

(ii) For $ z\in \bD $ we have
\begin{equation}
  \label{eq:21}
  |\widetilde{p_1}(z)| \leq \min \Big( |p(z)|, |p(1/\bar{z})| \Big) \, .
\end{equation}
To see this, observe that for $|z_j| > 1$  each Blaschke
factor in \eqref{eq:20} satisfies 
$$
\Big| \frac{1 -\bar{z}_j z}{z-z_j} \Big| = \Big| \frac{
    \bar{z}_j\inv - z}{z_j\inv z -1}\Big| \leq 1 \, ,
$$
thus $|\widetilde{p_1}(z)| \leq |p(z)|$ for $z\in \bD $. 
Using the first factorization of $\widetilde{p_1}$ in \eqref{eq:20} and
$p(1/\bar{z}) = \bar{z}^{-l} \prod _{|z_j| \leq 1} \big( 1/\bar{z} -
z_j \big) \,  \prod _{|z_j| > 1} \big( 1/\bar{z} -
z_j \big) $, the second inequality in \eqref{eq:21} follows from 
$$
\frac{|\widetilde{p_1}(z)|}{|p(\frac{1}{\bar{z}})|} = |z|^{2l} \prod
_{|z_j|\leq 1} \frac{|z-z_j|}{|\bar{z}\inv (1-\bar{z}z_j)|} \, 
\prod _{|z_j|> 1} \frac{|1- \bar{z_j}z|}{|\bar{z}\inv
    (1-\bar{z}z_j)|} \leq 1 \, .
$$
Finally we set 
\begin{equation}
  \label{eq:22}
  p_1(z) = \widetilde{p_1}\Big( (1+ \frac{\gamma}{3n})\inv z \Big) \in
  \poln \, .
\end{equation}
Then by \eqref{eq:23}
$$
\|p_1\|^2 \hano \geq (1+ \frac{\gamma}{3n})^{-2n} \|\widetilde{p_1} \|^2 \hano
\geq e^{-2\gamma /3} \|p\|^2 \hano \, .
$$
and obviously $\|p_1\|^2 \hano \leq \|\widetilde{p_1}\|\hano ^2 =  \|p\|^2\hano $. 

\textbf{Step 3.} Sampling on $\Lambda _n ^{(1)}$. If $\mu = (1+
\frac{\gamma}{3n}) \lambda \in \Lambda _n ^{(1)} \subseteq \bD $, then
by \eqref{eq:21}
$$
|p_1(\mu )| = |\widetilde{p_1} (\lambda )| \leq |p(\lambda )| \, ;
$$
if $\mu = (1+
\frac{\gamma}{3n}) / \bar{\lambda }\in \Lambda _n ^{(1)} \subseteq \bD $, then 
$$
|p_1(\mu )| = |\widetilde{p_1} (1/ \bar{\lambda } )| \leq |p(\lambda )| \, .
$$
In conclusion, we obtain
$$
 \frac{1}{n} \, \sum _{\mu \in \Lambda
      _n^{(1)}} |p_1(\mu )|^2 \leq   \frac{1}{n} \, \sum _{\lambda \in \Lambda
      _n} |p(\lambda )|^2 \, ,
  $$
which was to be shown. 
\end{proof}

\begin{proof}[Proof of Theorem~\ref{mztorushardy}]
% We can now finish the proof.
Applying Lemma~\ref{teclemma} repeatedly, 
%with $\gamma $ replaced by $(2/3)^j\gamma $ in the $j$-th iteration,
after  $\ell$ steps we  construct  a set $\Lambda _n^{(\ell )}  \subseteq
\bD $ with the following properties:
% \begin{align}
%   \label{eq:24}
%   \Lambda _n ^{(\ell )} & \subseteq A_{\Big(\tfrac{3}{4}\Big)^\ell
%    \tfrac{\gamma}{n}} = \{ z\in \bD : 1-(\tfrac{3}{4})^\ell
%    \tfrac{\gamma}{n} \leq |z| \leq (1-(\tfrac{3}{4})^\ell
%    \tfrac{\gamma}{n})\inv  \} \, , \\
%   \widetilde{\Lambda _n ^{(\ell )}} &= \widetilde{\Lambda _n} \, . \label{eq:30}
%  \end{align}
\begin{align}
  \label{eq:24}
  \Lambda _n ^{(\ell )} & \subseteq A_{\gamma _\ell / n}  \qquad
  \text{ with }  \gamma _\ell = \tfrac{3}{4} \gamma _{\ell -1} \, , \\
  \widetilde{\Lambda _n ^{(\ell )}} &= \widetilde{\Lambda _n} \, . \label{eq:30}
 \end{align}
 Furthermore, for given $p \in \poln $ we construct a sequence on
 polynomials $p_1, \ldots , p_{\ell} , \ldots$ with decreasing norm
 $\|p_\ell \| \hano \leq \dots \leq \|p_1\| \hano \leq \|p\| \hano $,  such that
 \begin{equation}
   \label{eq:25}
\| p_\ell \| ^2 \hano \geq e^{-\tfrac{2}{3} \gamma _{\ell -1} }\|p_{\ell -1} \|
  ^2 \hano \geq e^{-\tfrac{2}{3} (\gamma _{\ell -1} + \gamma _{\ell
      -2} )} \|p_{\ell -2} \|^2 \hano \geq \dots \geq e^{-\tfrac{2}{3}
    \sum _{j=0}^{\ell -1} \gamma _{j}} \|p \|^2 \hano  \, .
\end{equation}
Since $\gamma _j = 3\gamma _{j-1}/4$ and $\gamma _0 = \gamma $, we
find $\gamma _{\ell } = \big(\frac{3}{4}\big)^{\ell }\gamma$ and 
$$\tfrac{2}{3}
    \sum _{j=0}^{\ell -1} \gamma _{j} \leq  \tfrac{2}{3} \sum
    _{j=0}^{\infty } (3/4)^j \gamma \leq \tfrac{8}{3}\gamma \, .
    $$
    It follows that always $\|p_\ell \|^2\hano \geq
    e^{-\tfrac{8}{3}\gamma } \|p\|^2\hano $.
    
We now let $\ell $ tend to $\infty $. Since the sequence $p_\ell $ is
bounded in the finite-dimensional space $\poln $, it contains a convergent subsequence such
that $\lim _{j\to\infty } p_{\ell _j} = p_\infty \in \poln
$. Furthermore, by \eqref{eq:25} %  since by construction 
% $$
% \| p_\ell \| ^2 \hano \geq e^{-\gamma \sum _{j=0}^{\ell -1}
%   (\frac{2}{3})^j} \|p\| ^2 \hano \geq e^{-\frac{3}{2}\gamma } \|p\|^2
% \hano \, ,
% $$
the limiting polynomial $p_\infty $ must be non-zero. By \eqref{eq:24}
every point in $\Lambda _n ^{(\ell )} $ % satisfies $1 -   \Big(\tfrac{2}{3}\Big)^\ell
  % \tfrac{\gamma}{n} \leq |\lambda | \leq \Big(1 -   \Big(\tfrac{2}{3}\Big)^\ell
  % \tfrac{\gamma}{n}\Big) \inv $. Thus as $\ell
  %  \to \infty $, the points in $\Lambda _n^{(\ell )}
    converges to a  point on the torus, precisely to the corresponding
    point in the projection
   $\widetilde{\Lambda _n}$. Using \eqref{eq:25}, it follows that
$$
    \frac{ \frac{1}{n} \, \sum _{\lambda \in \Lambda
      _n} |p(\lambda )|^2}{\|p\|^2 \hano }
   \geq e^{-8\gamma/3 }   \, \frac{ \frac{1}{n} \, \sum _{\lambda \in \widetilde{\Lambda
      _n}} |p_\infty (\lambda )|^2}{\|p_\infty \|^2\hano } 
  \, .   
$$
Finally, we recall the assumption that the projected family $\widetilde{\Lambda _n}$ is
a \mz\ family for the  polynomials of degree $n$  in
$L^2(\bT )$. % Since trigonometric polynomials of degree $n$ on $\bT$
% correspond to algebraic polynomials of degree $2n$ by the transform
% $p(e^{it}) \to z^np(z)$,
Consequently, we obtain that 
$\frac{1}{n} \, \sum _{\lambda \in \widetilde{\Lambda
      _n}} |p_\infty (\lambda )|^2 \geq A \|p_\infty \|^2\hano $, 
  which implies the corresponding sampling inequality for $p\in \cP _{n}$.

 The upper bound is proved almost exactly as the corresponding\footnote{Note that
    \cite{OS07} uses the weights $m_n=\# \Lambda _n$ instead of
    $k_n(\lambda,\lambda) \asymp n$. This does not affect the
    estimates.}
 statement for $\poln $ in $L^2(\bT )$ in Thm.~9 of \cite{OS07}. Since
  $\widetilde\Lambda_n$ is a \mz\ family  for $\poln $ in $L^2(\bT )$
  by our assumption, \cite[Thm.~9]{OS07} asserts that for every
  interval $I\subseteq \bT $  of length $1/n$ we have   $\#
  (\widetilde{\Lambda _n} \cap I) \leq C$.  Since $\Lambda _n \subseteq \ar $, this condition
  implies that $\# \Lambda _n \cap B(z,1/n) \leq C'$ for all $z\in
  \ar $. This geometric condition now yields the upper bound
  $\tfrac{1}{n} \sum _{\lambda \in \Lambda _n} |p(\lambda)|^2 \lesssim
    \|p\| \hano ^2$ for all $p\in \poln $ precisely as in \cite{OS07}. Indeed
    that proof uses the submean-value property and the extension to
    $\ar $.   
%   therefore it is the union of $C$ $\varepsilon/n$-separated families of points. Thus since the mapping from $\Lambda_n$ to  $\widetilde{\Lambda_n}$ is one to one and $\Lambda_n$ is contained in the corona 
% $1-\gamma/n \le |z|\le 1$, then it is also a union of $C$ $\varepsilon/n$-separated families of points and by the geometric description it satisfies the upper bound inequality.
\end{proof}

\vspace{3mm}

Theorem~\ref{mztorushardy} shows that to every \mz\ family for
 polynomials on $\bT $   we can associate \mz\ families in $H^2(\bD )$ by moving points from the boundary $\bT =
\partial \bD $ into a carefully controlled annulus $\cro \subseteq \bD $. The
following example investigates the role of points in the interior of
$\bD $ for \mz\ families. %  and show that some new phenomena occur that
% require a deeper investigation. In contrast to \mz\ families for the
% Bergman space we are far from a
% complete understanding of \mz\
% families for the Hardy space. 

\vspace{3mm}

\noindent \textbf{Example.} We   construct an example of a \mz\
family $(\Lambda _n)$ for polynomials $\poln $ in $H^2(\bD )$, so that
  $(\Lambda _n \cap \cro )$ is not a \mz\ family in
$H^2$ and %on the other hand 
the projection  $(\widetilde{\Lambda _n})$ is not  a \mz\ family for
$\poln $ in $L^2(\bT)$. 
Let $\gamma >0$, $\alpha_n >0$, and 
\begin{equation}
  \label{eq:27}
  \Lambda _n = \{ (\gn ) e^{2\pi i k/n} : k=0, \dots , n-1 \} \cup \{
  \alpha _n e^{2\pi i /n^2} \} \, .
\end{equation}

(i) If  $\alpha _n < \gn $, then $\# (\Lambda _n \cap \cro )= n < \mathrm{dim}
\, \poln $ and thus $(\Lambda _n\cap \cro  )$ cannot be  a \mz\ family
for $\poln $ in $H^2$. 

(ii) The projected family  $\widetilde{\Lambda _n} = \{  e^{2\pi i k/n} : k=0, \dots , n-1 \} \cup \{e^{2\pi i /n^2}\}$
is not a \mz\ family for $\poln $ in
$L^2(\bT )$. We % first consider the modified set $\Lambda _n' = \{ (\gn ) e^{2\pi i k/n} : k=0, \dots , n-1 \} \cup \{
  % 0 \}$ and
  choose $p(z) = z^n -1$ with $\|p\|^2 \hano =2$. Then $p(e^{2\pi i k/n}) = 0$ and $|p(e^{2\pi
    i /n^2})| = |e^{2\pi i /n} -1|  \lesssim 1/n $, so that
$$
\frac{1}{n+1} \sum _{\lambda \in \widetilde{\Lambda _n}} |p(\lambda )|^2
\lesssim \frac{1}{n^3} \, ,
$$
violating the sampling inequality for large $n$. 

(iii) However, $(\Lambda _n)$ is a \mz\ family for $\poln $ in
$H^2(\bD )$. To see this, we  consider the modified set $\Lambda
_n' = \{ (\gn ) e^{2\pi i k/n} : k=0, \dots , n-1 \} \cup \{ 0 \}$ and
then use a perturbation argument. 

We write  $p\in \poln $ as  $p(z) = p(0) + z \widetilde p(z)$ 
for a unique $\widetilde p \in \cP _{n-1}$. Then  $\|p\| ^2 = |p(0)|^2 +
\|\widetilde p\|^2$. Let  % $\widetilde p = \sum_{k = 0}^{n-1}
% a_k z^k$ and set 
$q(z) = \widetilde
p((1-\frac{\gamma}{n}) z)$,  then by ~\eqref{eq:23} $\|q\|\hano ^2 \geq
e^{-4\gamma } \|\widetilde p\|\hano ^2$. Calculating the norm of $q$
by sampling, we obtain 
% $\|\widetilde p\|^2 \hano= \sum_{k = 0}^{n-1} |a_k|^2$ and
% $\|q\|^2 \hano % = \sum_{k= 0}^{n-1}
% % |a_k|^2(1-\frac{\gamma}{n})^{2k}\geq
% e^{-4\gamma } \|p\|_2$.
\begin{align*}
\|\widetilde p\|^2 \asymp \|q\|^2 &= \frac 1{n} \sum_{j= 0}^{n-1}|q(e^{2\pi i j/n})|^2 =
\frac 1n \sum_{j= 0}^{n-1}|\widetilde p\big((1-\frac{\gamma}{n})e^{2\pi i j/n}\big)|^2  \\
&= \frac {1}{n(1-\frac{\gamma}{n})^2} \sum_{j=
                                                                                     0}^{n-1}|(1-\frac{\gamma}{n}) e^{2\pi i j/n}\widetilde p((1-\frac{\gamma}{n})e^{2\pi i j/n})|^2 \, .
\end{align*}
If  $n\ge 2\gamma$ we have
\[
 \|q\|^2 \le \frac 8{n} \sum_{j= 0}^{n-1}\Big(|p((1-\frac{\gamma}{n})e^{2\pi i j/n})|^2 + |p(0)|^2\Big)
\]
Since by \eqref{eq:6a} $k_n(\lambda ,\lambda ) \asymp n$ for $\lambda = (\gn
) e^{2\pi i j/n}$, and $k_n(0,0) = 1$, the above inequality states
that 
\[
 \|q\|^2 \lesssim \sum_{\lambda\in \Lambda_n'} \frac{|p(\lambda)|^2}{k_n(\lambda, \lambda)}.
\]
and finally
\[
 \|p\|^2 \le \|q\|^2 + |p(0)|^2 \lesssim \sum_{\lambda\in \Lambda_n'} \frac{|p(\lambda)|^2}{k_n(\lambda, \lambda)}.
\]
Thus $(\Lambda _n')$ is a \mz\ family for $\poln $ in $H^2(\bD )$. 
    The small perturbation $0\to \frac 1{n^2} e^{2\pi i /n^2}$ is of
    order $1/n^2$ and thus  preserves the sampling inequality

\vspace{3mm}

As already mentioned,  the standard definition of sampling sequences is
vacuous in the Hardy space. Thus Thomas in \cite{thomas98} proposed an alternative
definition of sampling in terms of the non-tangential
maximal function $M_\Lambda$. 
\[
 M_\Lambda(f) (e^{i\theta}) := \sup_{\Gamma(e^{i\theta})\cap \Lambda} |f|,
\]
where $\Gamma (e^{i\theta}) =
\{ z\in \bD : \tfrac{|z-e^{i\theta }|}{1-|z|} < 1+ \alpha\}$ is a non-tangential Stolz angle  at the point
$e^{i\theta}$. 
%\begin{df}\label{moddef}
 A set $\Lambda$ is called  PT-sampling\footnote{For Pascal Thomas.} in $H^2(\bD )$ if
 $\|M_\Lambda(f)\|_{L^2} \gtrsim \|f\|_2$ for all $f\in H^2(\bD )$. 
%\end{df}

Thomas  proves that a set $\Lambda$ is sampling for $H^2$
if and only if it norming for $H^\infty$ which was  geometrically
described by Brown, Shields and Zeller in \cite{BSZ60} % . They are
% characterized
by the property that the nontangential limit set of
$\Lambda$ must be of full measure in $\mathbb T$. This alternative
notion of sampling was inspired by a corresponding alternative
definition of interpolating sequences in the Hardy space by Bruna,
Nicolau and {\O}yma \cite{BNO96}.

 The relation between \mz\ families and
PT-sampling sets for $H^2$  is not clear. We only  mention that there
is no analog of Theorem~\ref{mzsampa} for PT-sampling:  Consider the
\mz\ family $\Lambda _n$ in the example \eqref{eq:27}. Its weak limit
in $\bD $ is just $\{0\}$, which is obviously not PT-sampling. Its
weak limit in $\bC $ is $\{0\} \cup \partial \bD $,  and this not even
covered by  the definition of 
PT-sampling.   We have not pursued this aspect further. 

\end{document}